\title[Pimsner algebras and Gysin sequences]{Pimsner algebras and Gysin sequences \\ ~\\  from principal circle actions}
\date{v1 September 2014; v2 February 2015}
\author{Francesca Arici, Jens Kaad, Giovanni Landi}
\address{International School of Advanced Studies (SISSA), Via Bonomea 265, 34136 Trieste, Italy} 
\address{International School of Advanced Studies (SISSA), Via Bonomea 265, 34136 Trieste, Italy} 
\address{
Matematica, Universit\`{a} di Trieste, Via A.~Valerio 12/1, 34127 Trieste, Italy and INFN, Sezione di Trieste, Trieste, Italy}
\email{farici@sissa.it, jenskaad@hotmail.com, landi@units.it}
\thanks{{\it Thanks.} GL was partially supported by the Italian Project ``Prin 2010-11 -- Operator Algebras, Noncommutative Geometry and Applications''.
}
\theoremstyle{plain}
\newtheorem{theorem}{Theorem}[section]
\newtheorem{prop}[theorem]{Proposition}
\newtheorem{lemma}[theorem]{Lemma}
\newtheorem{cor}[theorem]{Corollary}
\newtheorem{assu}[theorem]{Assumption}
\theoremstyle{definition}
\newtheorem{dfn}[theorem]{Definition}
\newtheorem{remark}[theorem]{Remark}
\theoremstyle{remark} 
\theoremstyle{plain}
\numberwithin{equation}{section}
\newcommand{\alpheqn}[1][\relax]{
     \refstepcounter{equation}
     \if#1\relax \relax
       \else \label{#1}
     \fi  
     \setcounter{saveeqn}{\value{equation}}%
    \setcounter{equation}{0}%
    \renewcommand{\theequation}{\thealphequation}}
\newcommand{\reseteqn}{\setcounter{equation}{\value{saveeqn}}%
     \renewcommand{\theequation}{\thearabicequation}}
\providecommand{\mathscr}{\mathcal} 
\newcommand{\cd}{\cdot}
\newcommand{\clc}{\cdot\ldots\cdot}
\newcommand{\ot}{\otimes}
\newcommand{\hot}{\widehat \otimes}
\newcommand{\op}{\oplus}
\newcommand{\olo}{\otimes\ldots\otimes}
\newcommand{\plp}{+ \ldots +}
\newcommand{\ci}{\circ}
\newcommand{\ti}{\times}
\newcommand{\nn}{\mathbb{N}}
\newcommand{\zz}{\mathbb{Z}}
\newcommand{\cc}{\mathbb{C}}
\newcommand{\ee}{\mathbb{E}}
\newcommand{\al}{\alpha}
\newcommand{\be}{\beta}
\newcommand{\ga}{\gamma}
\newcommand{\de}{\delta}
\newcommand{\De}{\Delta}
\newcommand{\ep}{\varepsilon}
\newcommand{\io}{\iota}
\newcommand{\la}{\lambda}
\newcommand{\si}{\sigma}
\newcommand{\te}{\theta}
\newcommand{\pa}{\partial}
\newcommand{\C}[1]{\mathcal{#1}}
\newcommand{\T}[1]{\textup{#1}}
\newcommand{\fork}[2]{\left\{ \begin{array}{#1} #2 \end{array} \right.} 
\newcommand{\ma}[2]{\left(\begin{array}{#1} #2 \end{array} \right)}
\newcommand{\lrar}{\Leftrightarrow}
\newcommand{\rar}{\Rightarrow}
\newcommand{\su}{\subseteq}
\newcommand{\q}{\qquad}
\newcommand{\wit}{\widetilde}
\newcommand{\inn}[1]{\langle #1 \rangle}
\newcommand{\dd}{\mathrm{d}}
\newcommand{\ii}{\mathrm{i}}
\newcommand{\M}{\mbox}
\newcommand\sA{\mathscr{A}}
\newcommand\sB{\mathscr{B}}
\newcommand\sF{\mathscr{F}}
\newcommand\sK{\mathscr{K}}
\newcommand\sL{\mathscr{L}}
\begin{document}

\subjclass[2010]{19K35, 55R25, 46L08, 58B32}
\keywords{KK-theory, Pimsner algebras, Gysin sequences, circle actions, quantum principal bundles, quantum lens spaces, quantum weighted projective spaces.} 

\begin{abstract}
A self Morita equivalence over an algebra $B$, given by a $B$-bimodule $E$, is thought of as a line bundle over $B$.
The corresponding Pimsner algebra $\C O_E$ is then the total space algebra of a noncommutative principal circle bundle over $B$. A natural Gysin-like sequence relates the $KK$-theories of $\C O_E$ and of $B$. Interesting examples come from $\C O_E$ a quantum lens space over 
$B$ a quantum weighted projective line (with arbitrary weights). The $KK$-theory of these spaces is explicitly computed and natural generators are exhibited.
\end{abstract}
\maketitle

\tableofcontents
\parskip 1ex
\linespread{1.1}

\section{Introduction}

In the present paper we put in close relation two notions that seem to have touched each other only occasionally in the recent literature.
These are the notion of a Pimsner (or Cuntz-Krieger-Pimsner) algebra on one hand and that of a noncommutative (in general) principal circle bundle on the other.

At the $C^*$-algebraic level one needs a self Morita equivalence of a $C^*$-algebra $B$, thus we look at a full Hilbert $C^*$-module $E$ over $B$ together with an isomorphism of $B$ with the compacts on $E$. Through a natural universal construction this data gives rise to a $C^*$-algebra, the \emph{Pimsner algebra} $\C O_E$ generated by $E$. In the case where both $E$ and its Hilbert $C^*$-module dual $E^*$ are finitely generated projective over $B$ one obtains that the $*$-subalgebra generated by the elements of $E$ and $B$ becomes the total space of a noncommutative principal circle bundle with base space $B$.

At the purely algebraic level we start from a $\zz$-graded $*$-algebra $\sA$ which forms the total space of a \emph{quantum principal circle bundle} with base space the $*$-subalgebra of invariant elements $\sA_{(0)}$ and with a coaction of the Hopf algebra $\C O(U(1))$ coming from the $\zz$-grading. Provided that $\sA$ comes equipped with a $C^*$-norm, which is compatible with the circle action likewise defined by the $\zz$-grading, we show that the closure of $\sA$ has the structure of a Pimsner algebra. Indeed, the first spectral subspace $\sA_{(1)}$ is then finitely generated and projective over the algebra $\sA_{(0)}$. The closure $E$ of $\sA_{(1)}$ will become a Hilbert $C^*$-module over $B$, the closure of $\sA_{(0)}$, and the couple $(E,B)$ will lend itself to a Pimsner algebra construction.

The commutative version of this part of our program was spelled out in \cite[Prop.~5.8]{GG13}. This amounts to showing that the continuous functions on the total space of a (compact) principal circle bundle can be described as a Pimsner algebra generated by a classical line bundle over the compact base space.

With a Pimsner algebra there come two natural six term exact sequences in $KK$-theory, which relate the $KK$-theories of the Pimsner algebra $\C O_E$ with that of the $C^*$-algebra of (the base space) scalars $B$. The corresponding sequences in $K$-theory are noncommutative analogues of the Gysin sequence which in the commutative case relates the $K$-theories of the total space and of the base space. The classical cup product with the Euler-class is in the noncommutative setting replaced by a Kasparov product with the identity minus the generating Hilbert $C^*$-module $E$.
Predecessors of these six term exact sequences are the Pimsner-Voiculescu six term exact sequences of \cite{PiVo:EKE} for crossed products by the integers.

Interesting examples are quantum lens spaces over quantum weighted projective lines. The latter spaces $W_q(k,l)$ are defined as fixed points of weighted circle actions on the quantum $3$-sphere $S_q^3$. On the other hand, quantum lens spaces $L_q(dlk;k,l)$ are fixed points for the action of a finite cyclic group on $S_q^3$. For general $(k,l)$ coprime positive integers and any positive integer $d$, the coordinate algebra of the lens space is a quantum principal circle bundle over the corresponding coordinate algebra for the quantum weighted projective space, thus generalizing the cases studied in \cite{BrFa:QT}.

At the $C^*$-algebra level the lens spaces are given as Pimsner algebras over the $C^*$-algebra 
of the continuous functions over the weighted projective spaces (see \S\ref{se:qls}). 
Using the associated exact sequences coming from the construction of \cite{Pim:CCC}, we explicitly 
compute in \S\ref{s:kktlen} the $KK$-theory of these spaces for general weights. 
A central character in this computation is played by an integer matrix whose entries are index pairings. These are in turn computed by pairing the corresponding Chern-Connes characters in cyclic theory. The computation of the $KK$-theory of our class of $q$-deformed lens spaces is,  
to the best of our knowledge, a novel one.
Also, it is worth emphasizing that the quantum lens spaces and weighted projective spaces are in general not $KK$-equivalent to their commutative counterparts.

Pimsner algebras were introduced in \cite{Pim:CCC}. This notion gives a unifying framework for a range of important $C^*$-algebras including crossed products by the integers, Cuntz-Krieger algebras \cite{CuKr:TMC,Cun:SGI}, and $C^*$-algebras associated to partial automorphisms \cite{Exe:CPP}. 
Generalized crossed products, a notion which is somewhat easier to handle, were independently invented in \cite{AEE:MEC}. 
More recently, Katsura has constructed Pimsner algebras for general $C^*$-correspondences \cite{Ka04}. 
In the present paper we work in a simplified setting 
(see Assumption~\ref{a:hilmod} below) which is close to the one of \cite{AEE:MEC}.

\subsubsection*{Acknowledgments} 
We are very grateful to Georges Skandalis for many suggestions and to Ralf Meyer for useful discussions. We thank 
Tomasz Brzezi{\'n}ski for making us aware of the reference \cite{NaVO82}.
This paper was finished at the Hausdorff Research Institute for Mathematics in Bonn during the 2014 Trimester Program 
``Non-commutative Geometry and its Applications". We thank the organizers of the Program for the kind invitation and all people 
at HIM for the nice hospitality. 

\section{Pimsner algebras}

We start by reviewing the construction of Pimsner algebras associated to Hilbert $C^*$-modules as given in \cite{Pim:CCC}.
Rather than the full fledged generality we aim at a somewhat simplified version adapted to the context of the present paper, and motivated by  our geometric intuition coming from principal circle bundles.

Our reference for the theory of Hilbert $C^*$-modules is \cite{La95}.
Throughout this section $E$ will be a countably generated (right) Hilbert $C^*$-module over a separable $C^*$-algebra $B$,
with $B$-valued (and right $B$-linear) inner product denoted $\inn{\cdot,\cdot}_B$; or simply $\inn{\cdot,\cdot}$ to lighten notations. Also, $E$ is taken to be full, that is the ideal $\inn{E,E} := \T{span}_{\cc}\big\{ \inn{\xi,\eta} \, | \, \xi,\eta \in E \big\}$ is dense in $B$.  

Given two Hilbert $C^*$-modules $E$ and $F$ over the same algebra $B$, we denote by $\sL(E, F)$ the space of bounded \emph{adjointable}
homomorphisms $T: E \to F$. For each of these there exists a homomorphism $T^*: F \to E$ (the adjoint) with the property that $\inn{T^* \xi,\eta} = \inn{\xi, T \eta}$ for any $\xi \in F$ and $\eta \in E$. 
Given any pair $\xi \in F, \eta \in E$, an adjointable operator $\theta_{\xi, \eta} : E \rightarrow F$ is defined by
$$
\theta_{\xi, \eta} (\zeta) =  \xi  \inn{\eta, \zeta} \, , \quad \forall \, \zeta \in E \, .
$$
The closed linear subspace of $\sL(E,F)$ spanned by elements of the form $\theta_{\xi, \eta}$ as above is denoted $\sK (E,F)$, the space of compact homomorphisms. When $E=F$, it results that $\sL(E) := \sL(E,E)$ is a $C^*$-algebra with $\sK(E) := \sK(E,E) \subseteq \sL(E)$ the
(sub) $C^*$-algebra of compact endomorphisms of $E$. 

\subsection{The algebras and their universal properties}
On top of the above basic conditions, the following will remain in effect as well:
 
\begin{assu}\label{a:hilmod} 
There is a $*$-homomorphism $\phi : B \to \sL(E)$ which induces an isomorphism $\phi : B \to \sK(E)$. 
\end{assu}
Next, let $E^*$ be the dual of $E$ (when viewed as a Hilbert $C^*$-module): 
\[
E^* := \big\{ \phi \in \T{Hom}_B(E,B) \, | \, \exists \, \xi \in E \T{ with } \phi(\eta) = \inn{\xi,\eta} \, \, \forall \eta \in E \big\}  \, .
\] 
Thus, with $\xi \in E$, if $\lambda_\xi: E \rightarrow B$ is the operator defined by $\lambda_\xi (\eta) = \inn{\xi, \eta} $, for all $\eta \in E$, 
every element of $E^*$ is of the form $\la_\xi$ for some $\xi\in E$.
By its definition, $E^* := \sK(E,B)$.
The dual $E^*$ can be given the structure of a (right) Hilbert $C^*$-module over $B$. Firstly, the right action of $B$ on $E^*$ is given by
\[
\la_\xi \, b := \la_\xi \ci \phi(b)  \, .
\]
Then, with operator $\te_{\xi,\eta} \in \sK(E)$ for $\xi,\eta \in E$, the inner product on $E^*$ is given by
\[
\inn{\la_\xi, \la_\eta} := \phi^{-1}(\te_{\xi,\eta})  \, ,
\]
and $E^*$ is full as well. With the $*$-homomorphism $\phi^* : B \to \sL(E^*)$ defined by $\phi^*(b)(\la_\xi) := \la_{\xi \cd b^*}$, the pair $(\phi^*,E^*)$ satisfies the conditions in Assumption \ref{a:hilmod}.

We need the interior tensor product  $E\hot_\phi E$ of $E$ with itself over $B$. As a first step, one constructs the quotient of the vector space tensor product 
$E\otimes_{\rm alg}E$ by the ideal generated by elements of the form
\begin{equation}\label{ns}
\xi b \otimes \eta - \xi \otimes \phi(b) \eta \, , \qquad \textup{for}  \quad \xi, \eta \in E \, , \quad b \in B \, . 
\end{equation}
There is a natural structure of right module over $B$ with the action given by 
\[
(\xi \otimes \eta) b = \xi \otimes (\eta b) \, , \qquad \textup{for}  \quad \xi, \eta \in E \, , \quad b \in B \, ,
\]
and a $B$-valued inner product given, on simple tensors, by
\begin{equation}
\label{eq:interprod}
\inn{\xi_1 \otimes \eta_1, \xi_2 \otimes \eta_2} = \inn{\eta_1, \phi(\inn{\xi_1,\xi_2}) \eta_2}  
\end{equation}
and extended by linearity. The inner product is well defined and has all required properties; in particular, the null space 
$N=\{\zeta \in E\ot_{\rm alg} E \, ;\,  \inn{\zeta, \zeta} = 0\}$ 
is shown to coincide with the subspace generated by elements of the form in \eqref{ns}. 
One takes $E \ot_\phi E := E\ot_{\rm alg}E / N$ and defines $E \hot_\phi E$ to be the Hilbert module obtained by completing with respect to the norm induced by \eqref{eq:interprod}. 
The construction can be iterated and, for $n>0$, we denote by $E^{\hot_{\phi} n}$, the $n$-fold interior tensor power of $E$ over $B$. Like-wise, $(E^*)^{\hot_{\phi^*} n}$ denotes the $n$-fold interior tensor power of $E^*$ over $B$.

To lighten notation, in the following we define, for each $n \in \zz$, the modules
\[
E^{(n)} := \begin{cases} E^{\hot_{\phi} n} & n>0 \\
B & n=0 \\
(E^*)^{\hot_{\phi^*} (-n)} & n<0 
\end{cases} \, \, .
\]
Clearly, $E^{(1)}=E$ and $E^{(-1)}=E^*$.
We define the Hilbert $C^*$-module over $B$:
\[
E_{\infty}:= \bigoplus_{n \in \zz} E^{(n)} \, .
\]
For each $\xi \in E$ we have a bounded adjointable operator $S_\xi : E_\infty \to E_\infty$ defined component-wise by
\begin{align*}
S_\xi(b) & := \xi \cd b \, , & b \in B  \, , \\
S_\xi(\xi_1 \otimes \cdots\otimes \xi_n) & := \xi \ot \xi_1 \otimes \cdots\otimes \xi_n \, , & n>0 \, , \\
S_\xi(\la_{\xi_1} \otimes \cdots\otimes \la_{\xi_{-n}}) & := \la_{\xi_2 \cd \phi^{-1}(\te_{\xi_1,\xi})} \ot \la_{\xi_3} 
\otimes \cdots\otimes \la_{\xi_{-n}}  \, , & n<0  \, .
\end{align*}
In particular, $S_\xi(\la_{\xi_1}) = \phi^{-1}(\te_{\xi, \xi_1}) \in B$. 

\noindent
The adjoint of $S_\xi$ is easily found to be given by $S_{\la_\xi} := S_\xi^* : E_\infty \to E_\infty$:
\begin{align*}
S_{\la_\xi}(b) & := \la_\xi \cd b \, , & b \in B  \, ,  \\
S_{\la_\xi}(\xi_1 \olo \xi_n) & := \phi(\inn{\xi,\xi_1})(\xi_2) \ot \xi_3 \olo \xi_n \, , & n>0 \, , \\
S_{\la_\xi}(\la_{\xi_1} \olo \la_{\xi_{-n}}) & := \la_\xi \ot \la_{\xi_1} \olo \la_{\xi_{-n}} \, , & n<0 \, ;
\end{align*}
and in particular $S_{\la_\xi}(\xi_1) = \inn{\xi,\xi_1} \in B$. 

From its definition, each $E^{(n)}$ has a natural structure of Hilbert $C^*$-module over $B$ and,
with $\sK $ again denoting the Hilbert $C^*$-module compacts, we have isomorphisms
\[
\sK (E^{(n)}, E^{(m)}) \simeq E^{(m-n)} \, .
\] 

\begin{dfn}
The \emph{Pimsner algebra} of the pair $(\phi,E)$ is the smallest $C^*$-subalgebra of $\sL(E_\infty)$ 
which contains the operators $S_\xi : E_\infty \to E_\infty$ for all $\xi \in E$. The Pimsner algebra is denoted by $\C O_E$
with inclusion $\wit \phi : \C O_E \to \sL(E_\infty)$.
\end{dfn}

There is an injective $*$-homomorphism $i : B \to \C O_E$. This is induced by 
the injective $*$-homomorphism $\phi : B \to \sL(E_\infty)$ defined component-wise by
\[
\begin{split}
\phi(b)(b') & := b \cd b' \, , \\ 
\phi(b)(\xi_1\olo \xi_n) & := \phi(b)(\xi_1) \ot \xi_2 \olo \xi_n \, , \\
\phi(b)(\la_{\xi_1} \olo \la_{\xi_n}) & := \phi^*(b)(\la_{\xi_1}) \ot \la_{\xi_2} \olo \la_{\xi_n} = \la_{\xi_1 \cd b^*} \ot \la_{\xi_2} \olo \la_{\xi_n} \, ,
\end{split}
\]
and which factorizes through the Pimsner algebra $\C O_E \su \sL(E_\infty)$. 
Indeed, for all $\xi,\eta \in E$ it holds that $S_\xi S_{\eta}^* = i(\phi^{-1}(\te_{\xi,\eta}))$, that is the operator $S_\xi S_{\eta}^*$ on $E_\infty$ is right-multiplication by the element $\phi^{-1}(\te_{\xi,\eta})\in B$.

A Pimsner algebra is universal in the following sense \cite[Thm.~3.12]{Pim:CCC}:

\begin{theorem}\label{t:unipro}
Let $C$ be a $C^*$-algebra and let $\sigma : B \to C$ be a $*$-homomorphism. Suppose that there exist elements $s_\xi \in C$ for all $\xi \in E$ such that 
\begin{enumerate}
\item $\alpha s_\xi + \beta s_\eta = s_{\al \xi + \be \eta} $ \quad for all \, $\alpha, \beta \in \mathbb{C}$ and $\xi, \eta \in E$ ,\\
\item $s_{\xi} \sigma (b) =s_{\xi b}$ \ and \ $\sigma(b) s_{\xi} = s_{\phi(b)(\xi)}$ \quad for all \, $\xi \in E$ and $b \in B$ ,\\
\item $s^*_\xi s_\eta = \sigma (\inn{\xi, \eta})$ \quad for all \, $\xi, \eta \in E$ , \\
\item $s_\xi s_\eta^* = \si\big(\phi^{-1}(\te_{\xi,\eta}) \big)$ \quad for all \, $\xi, \eta \in E$ .
\end{enumerate}
Then there is a unique $*$-homomorphism $\wit{\sigma}: \mathcal{O}_E \rightarrow C$ with $\wit{\si}(S_\xi) = s_\xi$ for all $\xi \in E$.
\end{theorem}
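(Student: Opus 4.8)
The plan is to build the $*$-homomorphism $\wit\sigma$ by first representing the whole module $E_\infty$ inside a Hilbert $C^*$-module over $C$ (or, more economically, inside $C$ itself via the relations), and then checking that the generators $S_\xi$ go to $s_\xi$. More concretely, I would first observe that conditions (1)--(3) say precisely that the assignment $\xi \mapsto s_\xi$ together with $\sigma$ defines a representation of the $C^*$-correspondence $E$ over $B$ on $C$ (a ``Toeplitz representation'' in Katsura's language): (1) gives linearity, (2) gives $B$-bilinearity, and (3) gives compatibility with the inner product. Condition (4) is the extra Cuntz--Pimsner covariance condition, which, using Assumption~\ref{a:hilmod} (so that $\phi \colon B \to \sK(E)$ is an isomorphism and every compact is of the form $\te_{\xi,\eta}$), says that the ideal on which covariance must hold is all of $B$. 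So the statement is the universal property of the Cuntz--Pimsner algebra in the special situation where Katsura's ideal $J_E$ equals $B$.

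Next I would make the representation concrete. Set $\sigma_n \colon E^{(n)} \to C$ for each $n \in \zz$ by declaring $\sigma_0 = \sigma$, $\sigma_n(\xi_1 \ot \cdots \ot \xi_n) = s_{\xi_1} \cdots s_{\xi_n}$ for $n>0$, and $\sigma_{-n}(\la_{\xi_1} \ot \cdots \ot \la_{\xi_n}) = s_{\xi_1}^* \cdots s_{\xi_n}^*$ for $n>0$; one checks using (2) that these are well defined on the interior tensor products (the defining relations \eqref{ns} are killed by (2)), and using (3) that they are isometric for the $C$-valued inner products, hence extend to the completions. The key computation is then that, under these maps, the operator $S_\xi$ on $E_\infty$ is intertwined with left multiplication by $s_\xi$ on the images: on $E^{(n)}$ with $n \geq 0$ this is immediate from the definitions, while on $E^{(n)}$ with $n<0$ one must use (4) to rewrite $s_\xi s_{\xi_1}^*$ as $\sigma(\phi^{-1}(\te_{\xi,\xi_1}))$, matching exactly the formula given for $S_\xi$ on $E^{(-n)}$ in the text (e.g. the identity $S_\xi(\la_{\xi_1}) = \phi^{-1}(\te_{\xi,\xi_1})$ corresponds to $s_\xi s_{\xi_1}^* = \sigma(\phi^{-1}(\te_{\xi,\xi_1}))$). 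Granting this, the $C^*$-subalgebra of $C$ generated by the $s_\xi$ carries a representation through which $S_\xi \mapsto s_\xi$ factors, and one defines $\wit\sigma$ on $\C O_E$ by restriction; since $\C O_E$ is by definition the $C^*$-algebra generated by the $S_\xi$, uniqueness is automatic once $\wit\sigma(S_\xi)=s_\xi$ is imposed.

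The main obstacle is establishing that $\wit\sigma$ is well defined as a $*$-homomorphism out of $\C O_E$, i.e. that it does not merely respect the algebraic relations among the $S_\xi$ but is continuous for the $C^*$-norm. The clean way is to exhibit $\wit\sigma$ as (the compression of) a genuine Hilbert-module representation: form $C$ as a Hilbert module over itself, build $E_\infty \hot_\sigma C$, and show that $S_\xi \ot 1$ acts on this module as an adjointable operator which, after identifying the relevant submodules with ideals of $C$, is implemented by $s_\xi \in C \subseteq \sL(C)$. This is exactly the argument of \cite[Thm.~3.12]{Pim:CCC}; I would cite it for the norm-control and gauge-invariant uniqueness part, while spelling out the correspondence-level verifications (1)--(4) $\Rightarrow$ the defining formulas of $S_\xi$, $S_\xi^*$ above, which is the part genuinely adapted to our simplified Assumption~\ref{a:hilmod} and is where the isomorphism $\phi\colon B \xrightarrow{\sim} \sK(E)$ does the work of collapsing the Toeplitz algebra onto $\C O_E$.
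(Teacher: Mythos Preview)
The paper does not give its own proof of this theorem: it is stated with a direct citation to \cite[Thm.~3.12]{Pim:CCC}, and the only additional remark is that $\wit\sigma \circ i = \sigma$ follows automatically. Your proposal is a correct reconstruction of the standard argument behind that reference---identifying (1)--(3) as a Toeplitz representation, (4) as the Cuntz--Pimsner covariance condition (which under Assumption~\ref{a:hilmod} collapses the Toeplitz algebra to $\C O_E$), and handling norm control via the induced representation on $E_\infty \hot_\sigma C$---so there is nothing to compare beyond noting that you have supplied details the paper deliberately omits.
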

Also, in the context of this theorem the identity $\wit{\sigma} \ci i = \si$ follows automatically.

\begin{remark}\label{re:ccorr}
In the paper \cite{Pim:CCC}, the pair $(\phi,E)$ was referred to as a \emph{Hilbert bimodule}, since the map
$\phi$ (taken to be injective there) naturally endows the right Hilbert module $E$ with a left module structure. 
As mentioned, our Assumption~\ref{a:hilmod} simplifies the construction to a great extent (see also \cite{AEE:MEC}).
For the pair $(\phi,E)$ with a general $*$-homomorphism $\phi : B \to \sL(E)$, 
(in particular, a non necessarily injective one), the name \emph{$C^*$-correspondence} over $B$ has recently 
emerged as a more common one, reserving the terminology Hilbert bimodule to the more restrictive case where one has both a left and a right inner product satisfying an extra compatibility relation.
\end{remark}

\subsection{Six term exact sequences}

With a Pimsner algebra there come two six term exact sequences in $KK$-theory. Firstly, 
since $\phi : B \to \sL(E)$ factorizes through the compacts $\sK(E) \su \sL(E)$, the following class is well defined.

\begin{dfn}\label{d:hilmodcla}
The class in $KK_0(B,B)$ defined by the even Kasparov module $(E, \phi, 0)$ (with trivial grading) will be denoted by $[E]$.
\end{dfn}

Next, let $P : E_\infty \to E_\infty$ denote the orthogonal projection with 
\[
\T{Im}(P) = \big( \op_{n = 1}^\infty E^{(n)} \big) \op B \su E_\infty \, .
\]
Notice that $[P,S_\xi] \in \sK(E_\infty)$ for all $\xi \in E$ and thus $[P,S] \in \sK(E_\infty)$ for all $S \in \C O_E$. 

Then, let $F := 2P-1 \in \sL(E_\infty)$ and recall that $\wit \phi : \C O_E \to \sL(E_\infty)$ is the inclusion.

\begin{dfn}
The class in $KK_1(\C O_E,B)$ defined by the odd Kasparov module $(E_\infty, \wit \phi, F)$ will be denoted by $[\pa]$.
\end{dfn}

For any separable $C^*$-algebra $C$ we then have the group homomorphisms
\[
[E] : KK_*(B,C) \to KK_*(B,C)\, , \q [E] : KK_*(C,B) \to KK_*(C,B) \\
\]
and
\[
[\pa] : KK_*(C,\C O_E) \to KK_{* + 1}(C,B) \, , \q [\pa] : KK_*(B,C) \to KK_{* + 1}(\C O_E,C) \, ,
\]
which are induced by the Kasparov product.

The six term exact sequences in $KK$-theory given in the following theorem were constructed by Pimsner, see \cite[Thm.~4.8]{Pim:CCC}.

\begin{theorem}\label{t:gysseq}
Let $\C O_E$ be the Pimsner algebra of the pair $(\phi,E)$ over the $C^*$-algebra $B$.
If $C$ is any separable $C^*$-algebra, there are two exact sequences: 
\[
\begin{CD}
KK_0(C, B) @>{1 - [E]}>> KK_0(C,B) @>{i_*}>> KK_0(C,\C O_E) \\
@A{[\pa]}AA & & @VV{[\pa]}V \\
KK_1(C,\C O_E) @<<{i_*}< KK_1(C,B) @<<{1 - [E]}< KK_1(C,B)
\end{CD}
\]
and \\
\[
\begin{CD}
KK_0(B,C) @<<{1 - [E]}< KK_0(B,C) @<<{i^*}< KK_0(\C O_E,C) \\
@VV{[\pa]}V & & @A{[\pa]}AA \\
KK_1(\C O_E,C) @>{i^*}>> KK_1(B,C) @>{1 - [E]}>> KK_1(B,C)
\end{CD}
\]

\noindent
with $i^*$, $i_*$ the homomorphisms in $KK$-theory induced by the inclusion $i: B \to \C O_E$.
\end{theorem}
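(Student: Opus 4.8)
The plan is to derive both six term exact sequences from a single short exact sequence of $C^*$-algebras and the standard machinery of $KK$-theory, identifying the connecting maps with the Kasparov products by $1-[E]$ and $[\pa]$. The starting point is the Toeplitz-like extension associated with the Pimsner construction: inside $\sL(E_\infty)$ consider the ideal generated by the compacts $\sK(E_\infty)$ intersected with $\C O_E$ (equivalently, the ideal generated by $1-S_\xi S_\xi^{*}$-type elements). One shows, exactly as in \cite[\S4]{Pim:CCC}, that the Pimsner algebra sits in a short exact sequence
\[
0 \longrightarrow \sK(E_\infty) \longrightarrow \C T_E \longrightarrow \C O_E \longrightarrow 0 \, ,
\]
where $\C T_E \su \sL(E_\infty)$ is the Toeplitz algebra generated by the $S_\xi$ together with the projection $P$, and the quotient map collapses $\sK(E_\infty)$. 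Since $E_\infty$ is a full countably generated Hilbert $B$-module, $\sK(E_\infty)$ is Morita equivalent to $B$, so $KK_*(C,\sK(E_\infty)) \cong KK_*(C,B)$ and similarly in the first variable; moreover the inclusion $B \simeq B\oplus 0 \su E_\infty$ realizes this Morita class compatibly with the homomorphism $i$.

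First I would feed this extension into the six term exact sequence in $KK$-theory in each variable separately, using that $KK$ is split exact and admits long exact sequences for semisplit extensions (the extension above is indeed semisplit, a completely positive splitting being available from the conditional expectation onto $\C T_E$). This immediately produces six term sequences with groups $KK_*(C,B)$, $KK_*(C,\C T_E)$, $KK_*(C,\C O_E)$ (and the analogous contravariant ones). The next step is to identify $\C T_E$ with $B$ at the level of $KK$: one checks that the composite $B \xrightarrow{\;\sigma_0\;} \C T_E$, where $\sigma_0$ sends $b$ to right multiplication on $E_\infty$, is a $KK$-equivalence. This is the Pimsner "Toeplitz algebra is $KK$-equivalent to the coefficient algebra" statement; it follows because $\C T_E$ is homotopy equivalent (through a dilation/rotation argument) to $B$, or alternatively by a direct Kasparov module computation. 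Granting this, the three-term portion $KK_*(C,B)\to KK_*(C,\C T_E)\to KK_*(C,\C O_E)$ becomes $KK_*(C,B)\to KK_*(C,B)\to KK_*(C,\C O_E)$, and one must compute the first of these maps: tracing through the extension, the map $\sK(E_\infty)\hookrightarrow \C T_E$ composed with the $KK$-equivalence $\C T_E \simeq B$ is exactly multiplication by $[E]$, because on the submodule $\op_{n\ge 1}E^{(n)}$ the generator $S_\xi$ acts as a shift whose "defect" is precisely the module $E$. Hence the first map in the three-term sequence is $1-[E]$ (identity from the direct inclusion minus $[E]$ from the shifted copy), and the remaining map is $i_*$; exactness of the six term sequence then gives that the connecting map is $[\pa]$ after one checks that the boundary class of the extension, under the Morita identification, is represented by the Kasparov module $(E_\infty,\wit\phi,2P-1)$ of Definition~\ref{d:hilmodcla}'s companion — this is a direct unwinding of the definition of the boundary map in $KK$ via Busby invariants and the fact that $[P,S]\in\sK(E_\infty)$ for $S\in\C O_E$.

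The second diagram is obtained by running the identical argument with $C$ in the contravariant slot, i.e. applying $KK_*(-,C)$ to the same semisplit extension; the only change is that arrows reverse, $i_*$ becomes $i^*$, and the products by $[E]$ and $[\pa]$ act on the other side. No new input is needed beyond the symmetry of the Kasparov product and the fact that the boundary map in either variable is cap/cup product with the same class $[\pa]\in KK_1(\C O_E,B)$.

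The main obstacle I expect is the identification of the first map in each three-term sequence with $1-[E]$ — that is, simultaneously (a) proving $\C T_E$ is $KK$-equivalent to $B$ via the obvious inclusion, and (b) computing that under this equivalence the ideal inclusion $\sK(E_\infty)\hookrightarrow\C T_E$ corresponds to the class $[E]$ rather than to something else. Part (a) requires the dilation argument showing the Toeplitz algebra is "contractible onto its coefficients," and part (b) requires carefully keeping track of the grading/shift: the decomposition $E_\infty = B \oplus \big(\op_{n\ge1}E^{(n)} \oplus \op_{n\le -1}E^{(n)}\big)$ must be matched against the two occurrences of $B$ in $\C T_E/\sK \cong$ ... in a way that produces the $1$ and the $[E]$ with the correct signs. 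Everything else — semisplitness of the extension, the Morita identification $\sK(E_\infty)\sim B$, functoriality — is routine and can be cited from \cite{Pim:CCC} and \cite{La95}.
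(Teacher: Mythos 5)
The paper does not prove this theorem; it cites Pimsner \cite[Thm.~4.8]{Pim:CCC}, so your blind attempt is being measured against Pimsner's own argument rather than anything in the text. Your overall strategy --- a Toeplitz extension, $KK$-equivalence of the Toeplitz algebra with $B$, identification of the ideal inclusion with $1-[E]$ and of the boundary map with $[\pa]$ --- is exactly Pimsner's, but the extension itself is set up incorrectly. You place $\C T_E$ inside $\sL(E_\infty)$, generated by the $S_\xi$ together with $P$, with ideal $\sK(E_\infty)$ and quotient $\C O_E$. This cannot be right: the elements $1 - S_\xi S_\xi^* = i\big(1 - \phi^{-1}(\te_{\xi,\xi})\big)$ act by left multiplication with the \emph{same} norm on every graded summand of the two-sided module $E_\infty$, so they are never compact on $E_\infty$ unless they vanish outright; and since $[P, S_\xi] \in \sK(E_\infty)$, the image of $P$ is a nontrivial central projection in the quotient by $\sK(E_\infty)$, so that quotient is strictly larger than $\C O_E$. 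Pimsner's extension lives on the Fock module $F(E) := P E_\infty = B \oplus \big( \oplus_{n \geq 1} E^{(n)} \big)$: the Toeplitz algebra is the $C^*$-subalgebra of $\sL(F(E))$ generated by the compressions $T_\xi := P S_\xi P|_{F(E)}$, the ideal is $\sK(F(E))$ (it is contained in $\C T_E$ precisely because $\phi(B) = \sK(E)$), and $\C T_E / \sK(F(E)) \simeq \C O_E$. The full two-sided $E_\infty$ enters only through the odd Kasparov module $(E_\infty, \wit\phi, 2P-1)$ representing $[\pa]$.

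The second gap is your claim that the identification of the boundary map with Kasparov product by $[\pa]$ is ``a direct unwinding of the definition of the boundary map in $KK$ via Busby invariants.'' Showing that the class of the Toeplitz extension in $KK_1(\C O_E, \sK(F(E))) \simeq KK_1(\C O_E, B)$ coincides with the class of $(E_\infty, \wit\phi, 2P-1)$ is the most delicate part of Pimsner's argument and occupies a substantial portion of \cite[\S4]{Pim:CCC}; it needs an explicit homotopy matching the two-sided $E_\infty$ against the Hilbert module produced by the Stinespring/Busby data of the semisplit extension, and this is exactly where the negative part $\oplus_{n \leq -1} E^{(n)}$ earns its keep. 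Your instinct that the $KK$-equivalence $B \simeq \C T_E$ and the computation identifying $\sK(F(E)) \hookrightarrow \C T_E$ with $1 - [E]$ are the main burden is correct and matches Pimsner's dilation/homotopy argument, but the boundary-map identification deserves equal billing as an obstacle and cannot be dispatched as a definition-chase.
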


\begin{remark}\label{rem:gysseq}
For  $C = \cc$, the first sequence above reduces to 
\[
\begin{CD}
K_0(B) @>{1 - [E]}>> K_0(B) @>{i_*}>> K_0(\C O_E) \\
@A{[\pa]}AA & & @VV{[\pa]}V \quad . \\
K_1(\C O_E) @<<{i_*}< K_1(B) @<<{1 - [E]}< K_1(B) 
\end{CD}
\]

This could be considered as a generalization of the classical \emph{Gysin sequence} in $K$-theory (see \cite[IV.1.13]{Ka78}) 
for the `line bundle' $E$ over the `noncommutative space' $B$ and with the map $1 - [E]$ having the role of the 
\emph{Euler class} $\chi(E):=1 - [E]$ of the line bundle $E$. The second sequence would then be an analogue in $K$-homology:
\[
\begin{CD}
K^0(B) @<<{1 - [E]}< K^0(B) @<<{i^*}< K^0(\C O_E) \\
@VV{[\pa]}V & & @A{[\pa]}AA \quad . \\
K^1(\C O_E) @>{i^*}>> K^1(B) @>{1 - [E]}>> K^1(B)
\end{CD}
\]
Examples of Gysin sequences in $K$-theory were given in \cite{ABL14} for line bundles over quantum projective spaces and leading to a class of quantum lens spaces. 
These examples will be generalized later on in the paper to a class of quantum lens spaces as circle bundles over quantum weighted projective spaces with arbitrary weights.
\end{remark}

\section{Pimsner algebras and circle actions}
An interesting source of Pimsner algebras consists of $C^*$-algebras which are equipped with a circle action and subject to an extra completeness condition on the associated spectral subspaces. We now investigate this relationship.

Throughout this section $A$ will be a $C^*$-algebra and $\{\si_z\}_{z \in S^1}$ will be a strongly continuous action of the circle $S^1$ 
on $A$.

\subsection{Algebras from actions}
For each $n \in \zz$, define the spectral subspace
\[
A_{(n)} := \big\{ \xi \in A \mid \si_z(\xi) = z^{-n} \, \xi \ \ \T{ for all } z \in S^1 \big\} \, .
\]
Then the invariant subspace $A_{(0)} \su A$ is a $C^*$-subalgebra and each $A_{(n)}$ is a (right) Hilbert $C^*$-module over $A_{(0)}$ with right action induced by the algebra structure on $A$ and $A_{(0)}$-valued inner product just $\inn{\xi,\eta} := \xi^* \, \eta$, for all $\xi,\eta \in A_{(n)}$.

\begin{assu}\label{a:semisat}
The data $(A, \si_z)$ as above is taken to satisfy the conditions:

\begin{enumerate}
\item The $C^*$-algebra $A_{(0)}$ is separable.
\item The Hilbert $C^*$-modules $A_{(1)}$ and $A_{(-1)}$ are full and countably generated over the $C^*$-algebra $A_{(0)}$. 
\end{enumerate}
\end{assu}

\begin{lemma}
With the $*$-homomorphism $\phi : A_{(0)} \to \sL(A_{(1)})$ simply defined by $\phi(a)(\xi) := a \, \xi$, the pair $(\phi,A_{(1)})$ satisfies the conditions of Assumption \ref{a:hilmod}.
\end{lemma}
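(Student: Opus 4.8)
The plan is to verify the two requirements of Assumption~\ref{a:hilmod} for the pair $(\phi, A_{(1)})$: first that $\phi : A_{(0)} \to \sL(A_{(1)})$ is a well-defined $*$-homomorphism into the adjointable operators, and second that it corestricts to an isomorphism $\phi : A_{(0)} \to \sK(A_{(1)})$. The first point is essentially formal: for $a \in A_{(0)}$ and $\xi \in A_{(1)}$ one has $\si_z(a\xi) = \si_z(a)\si_z(\xi) = z^{-1} a\xi$, so left multiplication preserves $A_{(1)}$; adjointability with $\phi(a)^* = \phi(a^*)$ follows from $\inn{a\xi,\eta} = (a\xi)^*\eta = \xi^*(a^*\eta) = \inn{\xi, a^*\eta}$, and the homomorphism property is inherited from the algebra structure of $A$. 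So the real content is the claim that $\phi$ is injective with dense range onto $\sK(A_{(1)})$.

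For injectivity, suppose $a \in A_{(0)}$ with $a\xi = 0$ for all $\xi \in A_{(1)}$. Then $a \, A_{(1)} A_{(1)}^* = 0$; since $A_{(-1)} = A_{(1)}^*$ and, by fullness in Assumption~\ref{a:semisat}(2), $\inn{A_{(1)}, A_{(1)}} = \T{span}\{\xi^*\eta\}$ is dense in $A_{(0)}$, one also gets that $\T{span}\{\xi\eta^* : \xi,\eta \in A_{(1)}\}$ generates a dense subset of a hereditary subalgebra; more directly, $a \xi \eta^* = 0$ for all $\xi,\eta$ forces $a \cdot \ov{A_{(1)} A_{(1)}^*} = 0$, and since $A_{(1)} A_{(1)}^* \cdot A_{(1)} = A_{(1)} \inn{A_{(1)},A_{(1)}}$ is dense in $A_{(1)}$, applying this to $a$ acting on $A_{(1)}$ and using that $a = a^* \in A_{(0)}$ acts as a multiplier, one deduces $a \cdot \ov{\inn{A_{(1)},A_{(1)}}} = 0$, hence $a A_{(0)} = 0$ and $a = 0$ by taking an approximate unit. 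For the range, note that for $\xi, \eta \in A_{(1)}$ the element $\xi \eta^* \in A_{(0)}$ satisfies $\phi(\xi\eta^*)(\zeta) = \xi\eta^*\zeta = \xi \inn{\eta,\zeta} = \te_{\xi,\eta}(\zeta)$, so $\phi$ maps the dense subspace $\T{span}\{\xi\eta^*\}$ of $A_{(0)}$ onto the dense subspace $\T{span}\{\te_{\xi,\eta}\}$ of $\sK(A_{(1)})$; by continuity $\phi(A_{(0)}) \supseteq \sK(A_{(1)})$. Combined with injectivity, $\phi$ restricts to a $*$-isomorphism onto $\sK(A_{(1)})$ as required.

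The step I expect to be the main obstacle is the surjectivity/density argument showing $\phi(A_{(0)}) = \sK(A_{(1)})$, and in particular making precise that $\T{span}\{\xi\eta^* : \xi, \eta \in A_{(1)}\}$ is dense in $A_{(0)}$ (not merely that $\T{span}\{\xi^*\eta\}$ is, which is the stated fullness of $A_{(1)}$). This requires also invoking fullness of $A_{(-1)} = A_{(1)}^*$: indeed $\inn{A_{(-1)}, A_{(-1)}}$ dense in $A_{(0)}$ is exactly the statement that $\T{span}\{\xi \eta^*\}$ is dense in $A_{(0)}$. One should check carefully that $\xi\eta^*$ for $\xi,\eta\in A_{(1)}$ indeed lands in $A_{(0)}$ (clear from $\si_z(\xi\eta^*) = z^{-1}z\,\xi\eta^* = \xi\eta^*$) and that the norm on $A_{(1)}$ as a Hilbert module, $\|\xi\|^2 = \|\xi^*\xi\|_{A_{(0)}} = \|\xi^*\xi\|_A = \|\xi\|_A^2$, agrees with the ambient $C^*$-norm, so that all the completeness and density statements are consistent between the Hilbert-module picture and the $C^*$-algebra $A$. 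Once these identifications are in place, the closure of $\phi(A_{(0)})$ contains all $\te_{\xi,\eta}$ and hence all of $\sK(A_{(1)})$, and injectivity upgrades this to an isomorphism.
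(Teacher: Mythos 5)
Your proposal is correct and follows essentially the same route as the paper: injectivity is reduced to fullness of $A_{(-1)}$ (the observation that $\T{span}\{\xi\eta^* : \xi,\eta\in A_{(1)}\} = \inn{A_{(-1)},A_{(-1)}}$ is dense in $A_{(0)}$), and the identification $\phi(\xi\eta^*) = \te_{\xi,\eta}$ together with closedness of $\phi(A_{(0)})$ gives $\phi(A_{(0)}) = \sK(A_{(1)})$. One small remark: the middle of your injectivity paragraph takes a circular detour through $A_{(1)}A_{(1)}^*\cdot A_{(1)}$ and $\inn{A_{(1)},A_{(1)}}$ that does not actually produce $aA_{(0)}=0$ — the clean argument, which you correctly state both at the start and in your final paragraph, is just that $a\xi\eta^* = 0$ for all $\xi,\eta\in A_{(1)}$ combined with fullness of $A_{(-1)}$ gives $aA_{(0)}=0$, and that same fullness statement is what turns $\ov{\phi(\T{span}\{\xi\eta^*\})} = \sK(A_{(1)})$ into the desired isomorphism.
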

\begin{proof}
To prove that $\phi : A_{(0)} \to \sL(A_{(1)})$ is injective, let $a \in A_{(0)}$ and suppose that $a \, \xi = 0$ for all $\xi \in A_{(1)}$. It then follows that $a \, \xi \, \eta^* = 0$ for all $\xi,\eta \in A_{(1)}$. But this implies that $a \, \inn{v,w} = 0$ for all $v,w \in A_{(-1)}$. Since $A_{(-1)}$ is full this shows that $a = 0$. We may thus conclude that $\phi : A_{(0)} \to \sL(A_{(1)})$ is injective, and the image of $\phi$ is therefore closed.

\noindent
To conclude that $\sK(A_{(1)}) \su \phi(A_{(0)})$ it is now enough to show that the operator $\te_{\xi,\eta} \in \phi(A_{(0)})$ 
for all $\xi,\eta \in A_{(1)}$. But this is clear since $\te_{\xi,\eta} = \phi(\xi \, \eta^*)$.

\noindent
To prove that $\phi(A_{(0)}) \su \sK(A_{(1)})$ it suffices to check that $\phi(\inn{v,w}) \in \sK(A_{(1)})$ for all $v,w \in A_{(-1)}$ (again since $A_{(-1)}$ is full). But this is true being $\phi(\inn{v,w}) = \te_{v^*,w^*}$.
\end{proof}

The condition that both $A_{(1)}$ and $A_{(-1)}$ are full over $A_{(0)}$ has the important consequence that the action $\{\si_z\}_{z \in S^1}$ is semi-saturated in the sense of the following:
\begin{dfn}
A circle action $\{\si_z\}_{z \in S^1}$ on a $C^*$-algebra $A$ is called \emph{semi-saturated} if $A$ is generated, as a $C^*$-algebra, by 
the fixed point algebra $A_{(0)}$ together with the first spectral subspace $A_{(1)}$.
\end{dfn}

\begin{prop}\label{p:semsat}
Suppose that $A_{(1)}$ and $A_{(-1)}$ are full over $A_{(0)}$. Then the circle action $\{\si_z\}_{z \in S^1}$ is semi-saturated.
\end{prop}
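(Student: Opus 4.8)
The plan is to show that the $C^*$-subalgebra $A'\su A$ generated by $A_{(0)}$ and $A_{(1)}$ coincides with $A$. Since the circle action is strongly continuous, every element of $A$ decomposes (via the conditional expectation $a\mapsto \int_{S^1}\si_z(a)\,\dd z$ and its spectral analogues) into a norm-convergent sum of components lying in the various spectral subspaces $A_{(n)}$, so it suffices to prove that $A_{(n)}\su A'$ for every $n\in\zz$. The subalgebra $A'$ is manifestly invariant under the circle action, hence it is itself the closure of the direct sum of its own spectral subspaces $A'_{(n)} = A'\cap A_{(n)}$, and the problem reduces to the purely algebraic assertion $A'_{(n)} = A_{(n)}$ for all $n$.

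For $n=0$ and $n=1$ this is immediate. For $n\geq 2$ one argues by induction: products of the form $\xi_1\cdots\xi_n$ with each $\xi_j\in A_{(1)}$ lie in $A_{(n)}\cap A'$, and I would show that such products have dense linear span in $A_{(n)}$. The key input is the fullness of $A_{(-1)}$: it gives that $\T{span}\{\,v^*w \mid v,w\in A_{(-1)}\,\}$ is dense in $A_{(0)}$, equivalently $\T{span}\{\,\xi\eta^* \mid \xi,\eta\in A_{(1)}\,\}$ is dense in $A_{(0)}$. Then for $\xi\in A_{(n)}$ and an approximate unit built from such sums, $\xi = \lim \sum_i (\xi\,\eta_i^*)\,\zeta_i$ with $\xi\eta_i^*\in A_{(n+1)}$... — rather, more cleanly: write $\xi\cdot a$ with $a\in A_{(0)}$ approximating a unit and $a=\lim\sum_i \zeta_i\eta_i^*$, $\zeta_i,\eta_i\in A_{(1)}$, so that $\xi = \lim\sum_i (\xi\eta_i^*)\,\zeta_i$ wait — that lowers the degree. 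The correct bookkeeping: $\xi\in A_{(n)}$, approximate $\xi$ by $\xi a$ where $a\in A_{(0)}$, $a\approx\sum_i \zeta_i^*\eta_i$ with $\zeta_i,\eta_i\in A_{(1)}$; but $\zeta_i^*\in A_{(-1)}$, so $\xi\zeta_i^*\in A_{(n-1)}$, giving $\xi\approx\sum_i(\xi\zeta_i^*)\,\eta_i$ with $\xi\zeta_i^*\in A_{(n-1)}\su A'$ by the inductive hypothesis and $\eta_i\in A_{(1)}\su A'$; hence $\xi\in A'$. For $n\leq -1$ one proceeds symmetrically, using fullness of $A_{(1)}$ to build approximate units in $A_{(0)}$ out of elements $\eta_i^*\zeta_i$ with $\eta_i,\zeta_i\in A_{(-1)}$, together with the base case $A_{(-1)}=\overline{A_{(1)}^{\,*}}$ — which follows since $A_{(1)}^*\su A'$ as $A'$ is a $*$-subalgebra.

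The main obstacle, and the only point requiring a little care, is justifying that an arbitrary $a\in A$ is recovered from its spectral components, i.e. that the Fej\'er-type sums $\sum_{|n|\leq N}(1-\tfrac{|n|}{N+1})\,\Phi_n(a)$ (with $\Phi_n(a):=\int_{S^1} z^{n}\si_z(a)\,\dd z$ the $n$-th spectral projection) converge in norm to $a$; this is the standard Fej\'er summation argument for strongly continuous actions and guarantees $A=\overline{\bigoplus_{n\in\zz}A_{(n)}}$. Once that is in hand, together with the density statements above — each of which is an elementary consequence of fullness and the existence of an approximate unit for $A_{(0)}$ — the inclusion $A'\supseteq A_{(n)}$ follows for every $n$, whence $A'=A$ and the action is semi-saturated.

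\begin{proof}
Let $A'\su A$ be the $C^*$-subalgebra generated by $A_{(0)}$ and $A_{(1)}$; clearly $A'$ is $\si$-invariant. For $n\in\zz$ let $\Phi_n : A\to A_{(n)}$, $\Phi_n(a)=\int_{S^1} z^n\,\si_z(a)\,\dd z$, be the projection onto the $n$-th spectral subspace. By Fej\'er summation for the strongly continuous action $\si$, every $a\in A$ is the norm limit of the Ces\`aro means of $\sum_n \Phi_n(a)$, so $A=\overline{\bigoplus_{n\in\zz} A_{(n)}}$; the same applies to $A'$, giving $A'=\overline{\bigoplus_{n\in\zz} A'_{(n)}}$ with $A'_{(n)}:=A'\cap A_{(n)}$. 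Hence it suffices to prove $A_{(n)}\su A'$ for all $n$.

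The cases $n=0,1$ hold by construction, and $A_{(-1)}\su A'$ since $A_{(1)}^*\su A'$ and $A_{(-1)}=\overline{A_{(1)}^{\,*}}$. Choose an approximate unit for $A_{(0)}$; by fullness of $A_{(1)}$, each of its elements is approximated in norm by finite sums $\sum_i \zeta_i^*\eta_i$ with $\zeta_i,\eta_i\in A_{(1)}$, and by fullness of $A_{(-1)}$ also by finite sums $\sum_i \eta_i^*\zeta_i$ with $\zeta_i,\eta_i\in A_{(-1)}$. Now induct on $n\geq 2$: for $\xi\in A_{(n)}$, write $\xi=\lim \xi a$ over the approximate unit, and $\xi a\approx \sum_i \xi\,\zeta_i^*\,\eta_i$ with $\zeta_i,\eta_i\in A_{(1)}$; since $\xi\zeta_i^*\in A_{(n-1)}\su A'$ by hypothesis and $\eta_i\in A_{(1)}\su A'$, we get $\xi\in A'$. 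Symmetrically, for $n\leq -2$ and $\xi\in A_{(n)}$, using $a\approx\sum_i \eta_i^*\zeta_i$ with $\eta_i,\zeta_i\in A_{(-1)}$ we find $\xi\approx\sum_i \xi\eta_i^*\,\zeta_i$ with $\xi\eta_i^*\in A_{(n+1)}\su A'$ and $\zeta_i\in A_{(-1)}\su A'$, hence $\xi\in A'$. Therefore $A_{(n)}\su A'$ for every $n\in\zz$, so $A'=A$ and the action $\{\si_z\}_{z\in S^1}$ is semi-saturated.
\end{proof}
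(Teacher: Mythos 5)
Your proof is correct and follows essentially the same strategy as the paper's: after reducing to the density of $\sum_{n}A_{(n)}$ in $A$ (the paper cites Exel's Prop.\ 2.5, you invoke Fej\'er summation, which is the standard proof of that fact), you run the same induction — approximate $\xi\in A_{(n)}$ by $\xi u_m$ with $u_m$ an approximate unit for $A_{(0)}$, use fullness to replace $u_m$ by $\sum_i\zeta_i^*\eta_i$, and observe $\xi\zeta_i^*$ drops into degree $n-1$. The only cosmetic difference is that the paper phrases the inductive claim as density of $\operatorname{span}\{x_1\cdots x_n\}$ in $A_{(n)}$ with an explicit $\varepsilon/3$ bookkeeping, while you phrase it as the inclusion $A_{(n)}\subseteq A'$; these are equivalent.
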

\begin{proof}
With $\T{cl}( \cd )$ refering to the norm-closure, we show that the Banach algebra 
\[
\T{cl}\Big( \sum_{n = 0}^\infty A_{(n)} \Big) \su A
\]
is generated by $A_{(1)}$ and $A_{(0)}$. A similar proof in turn shows that
\[
\T{cl}\Big( \sum_{n = 0}^\infty A_{(-n)} \Big) \su A
\]
is generated by $A_{(-1)}$ and $A_{(0)}$. Since the span $\sum_{n \in \zz} A_{(n)}$ is norm-dense in $A$ (see \cite[Prop.~2.5]{Exe:CPP}), this proves the proposition. We show by induction on $n \in \nn$ that
\[
(A_{(1)})^n := \M{span}\big\{ x_1 \clc x_n \mid x_1,\ldots, x_n \in A_{(1)} \big\}
\]
is dense in $A_{(n)}$. For $n = 1$ the statement is void.

Suppose thus that the statement holds for some $n \in \nn$.
Then, let $x \in A_{(n + 1)}$ and choose a countable approximate identity $\{ u_m \}_{m \in \nn}$ for the separable $C^*$-algebra $A_{(0)}$. Let $\ep > 0$ be given. We need to construct an element $y \in (A_{(1)})^{n + 1}$ such that
\[
\| x - y \| < \ep \ .
\]

To this end we first remark that the sequence $\{ x \cd u_m \}_{m \in \nn}$ converges to $x \in A_{(n + 1)}$. 
Indeed, this follows due to $x^* x \in A_{(0)}$ and since, for all $m \in \nn$, 
\[
\| x \cd u_m - x \|^2 = \| u_m x^* x u_m + x^* x - x^* x u_m - u_m x^* x \| \, .
\]
We may thus choose an $m \in \nn$ such that
\[
\| x \cd u_m - x \| < \ep/3 \ .
\]
Since $A_{(1)}$ is full over $A_{(0)}$, there are elements $\xi_1,\ldots,\xi_k$ and $\eta_1, \ldots, \eta_k \in A_{(1)}$ so that
\[
\| x \cd u_m - \sum_{j = 1}^k x \cd \xi_j^* \cd \eta_j \| < \ep / 3 \ .
\]
Furthermore, since $x \cd \xi_j^* \in A_{(n)}$ we may apply the induction hypothesis to find elements 
$z_1,\ldots,z_k \in (A_{(1)})^n$ such that
\[
\| \sum_{j = 1}^k x \cd \xi_j^* \cd \eta_j  - \sum_{j = 1}^k z_j \cd \eta_j \| < \ep/3 \ .
\]
Finally, it is straightforward to verify that for the element
\[
y := \sum_{j = 1}^k z_j \cd \eta_j \in (A_{(1)})^{n + 1}
\]
it holds that: $\| x - y \| < \ep$.  This proves the present proposition.
\end{proof}

Having a semi-saturated action one is lead to the following theorem \cite[Thm.~3.1]{AEE:MEC}.
\begin{theorem}\label{t:pimcir}
The Pimsner algebra $\C O_{A_{(1)}}$ is isomorphic to $A$. The isomorphism is given by $S_\xi \mapsto \xi$ for all $\xi \in A_{(1)}$.
\end{theorem}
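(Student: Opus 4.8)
The plan is to invoke the universal property of Theorem~\ref{t:unipro}. We take $C = A$ and $\sigma = \iota : A_{(0)} \hookrightarrow A$ the inclusion of the fixed point algebra, and for each $\xi \in A_{(1)}$ we set $s_\xi := \xi \in A$. The bulk of the work is to verify conditions (1)--(4) in that theorem, all of which are essentially immediate from the algebra structure of $A$ together with the definition of the Hilbert module inner product $\inn{\cdot,\cdot} = (\cdot)^*(\cdot)$ on $A_{(1)}$: linearity in $\xi$ is clear; $s_\xi \sigma(b) = \xi b = s_{\xi b}$ and $\sigma(b) s_\xi = b\xi = \phi(b)(\xi) = s_{\phi(b)(\xi)}$; $s_\xi^* s_\eta = \xi^* \eta = \inn{\xi,\eta} = \sigma(\inn{\xi,\eta})$; and for (4) one checks that $\phi(\xi \eta^*) = \theta_{\xi,\eta}$ in $\sK(A_{(1)})$, so $s_\xi s_\eta^* = \xi \eta^* = \phi^{-1}(\theta_{\xi,\eta})$ as an element of $A_{(0)} \subseteq A$. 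Note that (4) presupposes $\xi\eta^* \in A_{(0)}$, which holds since $\xi,\eta \in A_{(1)}$; and it presupposes that $\phi$ is an isomorphism onto $\sK(A_{(1)})$, which is exactly the content of the Lemma preceding Proposition~\ref{p:semsat}. With (1)--(4) in hand, Theorem~\ref{t:unipro} produces a $*$-homomorphism $\widetilde\sigma : \C O_{A_{(1)}} \to A$ with $\widetilde\sigma(S_\xi) = \xi$ and $\widetilde\sigma \circ i = \iota$.

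It then remains to show $\widetilde\sigma$ is an isomorphism. For \emph{surjectivity}, the image of $\widetilde\sigma$ is a $C^*$-subalgebra of $A$ containing $A_{(0)} = \iota(A_{(0)})$ and $A_{(1)}$; by Proposition~\ref{p:semsat} the action $\{\sigma_z\}$ is semi-saturated (here we use Assumption~\ref{a:semisat}, i.e.\ that $A_{(1)}$ and $A_{(-1)}$ are full over $A_{(0)}$), so $A$ is generated as a $C^*$-algebra by $A_{(0)}$ and $A_{(1)}$, whence $\widetilde\sigma$ is onto.

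\emph{Injectivity} is the main obstacle and requires a gauge-invariant uniqueness type argument. The idea is to exploit the circle action. On $\C O_{A_{(1)}}$ there is a natural gauge action $\{\gamma_z\}_{z \in S^1}$ determined by $\gamma_z(S_\xi) = z\, S_\xi$ (existence again via the universal property, applied with $C = \C O_{A_{(1)}}$ and $s_\xi = z S_\xi$), and by construction $\widetilde\sigma$ is equivariant: $\widetilde\sigma \circ \gamma_z = \sigma_z \circ \widetilde\sigma$, since $\sigma_z(\xi) = z \xi$ for $\xi \in A_{(1)}$. Averaging over $S^1$ gives faithful conditional expectations $\Phi^{\C O}$ onto the fixed point algebra $(\C O_{A_{(1)}})^{\gamma}$ and $\Phi^A$ onto $A_{(0)}$, intertwined by $\widetilde\sigma$. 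So it suffices to show $\widetilde\sigma$ restricts to an isomorphism on fixed point algebras: $(\C O_{A_{(1)}})^{\gamma} \xrightarrow{\ \sim\ } A_{(0)}$. The fixed point algebra of $\C O_{A_{(1)}}$ is the closure of $\bigcup_n \operatorname{span}\{ S_{\xi_1}\cdots S_{\xi_n} S_{\eta_n}^* \cdots S_{\eta_1}^*\}$, which one identifies with an inductive limit of the algebras $\sK(E^{(n)})$; tracking through $\widetilde\sigma$, the image of such a monomial is $\xi_1\cdots\xi_n \eta_n^* \cdots \eta_1^* \in A_{(0)}$, and one checks compatibility with the connecting maps. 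Faithfulness of $\widetilde\sigma$ on this AF-type core can be established by showing it is isometric on each $\sK(E^{(n)})$, using that $\phi$ (and hence each induced map on tensor powers) is an isometric isomorphism onto the relevant compacts. Combining faithfulness on the core with the equivariance and the faithful expectations then forces $\widetilde\sigma$ to be injective on all of $\C O_{A_{(1)}}$. Since this is precisely the argument of \cite[Thm.~3.1]{AEE:MEC}, we content ourselves with having indicated its structure.
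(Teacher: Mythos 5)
The paper gives no proof of Theorem~\ref{t:pimcir}; it simply cites \cite[Thm.~3.1]{AEE:MEC}. Your proposal supplies the expected argument, and its structure is correct: you build $\widetilde\sigma : \C O_{A_{(1)}} \to A$ from the universal property of Theorem~\ref{t:unipro}, obtain surjectivity from semi-saturatedness via Proposition~\ref{p:semsat}, and establish injectivity by a gauge-invariant uniqueness argument. The verifications of conditions (1)--(4) are right, in particular the key identity $\theta_{\xi,\eta} = \phi(\xi\eta^*)$ (which, as you note, is exactly the content of the lemma preceding Proposition~\ref{p:semsat}), and the injectivity sketch --- circle action, equivariance, faithful conditional expectations onto fixed point algebras, faithfulness on the core --- is precisely how the cited AEE result is proved. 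Since the paper does not prove the theorem, your proposal is strictly more detailed than the paper's own treatment, and it is the right proof.

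One small sign slip: with the paper's convention $A_{(n)} = \{\xi : \sigma_z(\xi) = z^{-n}\,\xi\}$, an element $\xi \in A_{(1)}$ satisfies $\sigma_z(\xi) = z^{-1}\xi$, not $z\xi$. Accordingly, the gauge action on $\C O_{A_{(1)}}$ that makes $\widetilde\sigma$ equivariant should be taken as $\gamma_z(S_\xi) = z^{-1} S_\xi$ (which, since $|z|=1$, still satisfies (1)--(4) of Theorem~\ref{t:unipro} and defines a circle action). This is cosmetic and does not affect the argument.
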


\subsection{$\zz$-graded algebras}
In much of what follows, the $C^*$-algebras of interest with a circle action, will come from closures of dense $\zz$-graded $*$-algebras, with the $\zz$-grading defining the circle action in a natural fashion.

Let $\sA = \op_{n \in \zz} \sA_{(n)}$ be a $\zz$-graded unital $*$-algebra. The grading is compatible with the involution 
$^*$, this meaning that $x^* \in \sA_{(-n)}$ whenever $x \in \sA_{(n)}$ for some $n \in \zz$. 
For $w \in S^1$, define the $*$-automorphism $\si_w : \sA \to \sA$ by 
\[
\si_w : x \mapsto w^{-n} x \qquad \T{for} \quad  x \in \sA_{(n)} \, \quad n \in \zz \, .
\]
We will suppose that we have a $C^*$-norm $\| \cd \| : \sA \to [0,\infty)$ on $\sA$  satisfying
\[
\| \si_w(x) \| \leq \| x \| \q \T{for all } \quad w \in S^1 \, \quad x \in \sA \, ,
\]
thus the action has to be isometric. The completion of $\sA$ is denoted by $A$.

The following standard result is here for the sake of completeness and its use below. The proof relies on the existence of a conditional expectation naturally associated to the action.
\begin{lemma}\label{l:denspe}
The collection $\{ \si_w\}_{w \in S^1}$ extends by continuity to a strongly continuous action of $S^1$ on $A$. 
Each spectral subspace $A_{(n)}$ agrees with the closure of $\sA_{(n)} \su A$. 
\end{lemma}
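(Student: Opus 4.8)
The plan is to prove both assertions by exploiting the $S^1$-averaging projections, which are bounded on $\sA$ because the action is isometric, and then extending everything by continuity.

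\medskip

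\textbf{Strong continuity of the extended action.} First I would check that each $\si_w$ extends to an isometry of $A$: from $\|\si_w(x)\|\le\|x\|$ applied to $x$ and to $\si_{w^{-1}}(x)$ we get $\|\si_w(x)\|=\|x\|$, so $\si_w$ is isometric on the dense subalgebra $\sA$ and extends uniquely to an isometric $*$-automorphism of $A$, with $\si_{w}\si_{w'}=\si_{ww'}$ and $\si_1=\mathrm{id}$ passing to the closure. For strong continuity $w\mapsto\si_w(a)$, I would use a standard $\ep/3$ argument: given $a\in A$ and $\ep>0$, pick $x\in\sA$ with $\|a-x\|<\ep/3$; then $\|\si_w(a)-\si_{w'}(a)\|\le \|\si_w(a-x)\|+\|\si_w(x)-\si_{w'}(x)\|+\|\si_{w'}(x-a)\|<2\ep/3+\|\si_w(x)-\si_{w'}(x)\|$, and since $x$ is a finite sum of homogeneous components $x=\sum_{n} x_n$ with $\si_w(x)=\sum_n w^{-n}x_n$, the middle term is $\le\sum_n|w^{-n}-w'^{-n}|\,\|x_n\|$, which is continuous in $(w,w')$ and vanishes on the diagonal. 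Hence $w\mapsto\si_w(a)$ is (uniformly) continuous, so the extended action is strongly continuous.

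\medskip

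\textbf{Identifying the spectral subspaces.} For $n\in\zz$ define the bounded operator $\Phi_n:A\to A$ by the vector-valued Riemann integral $\Phi_n(a):=\int_{S^1} w^{n}\,\si_w(a)\,\dd w$ (normalized Haar measure), which makes sense by strong continuity and satisfies $\|\Phi_n\|\le 1$. A direct computation shows $\si_{z}(\Phi_n(a))=z^{-n}\Phi_n(a)$, so $\Phi_n(A)\su A_{(n)}$; conversely $\Phi_n$ restricted to $A_{(n)}$ is the identity, so $\Phi_n$ is the norm-one projection of $A$ onto $A_{(n)}$ (in particular $A_{(n)}=\Phi_n(A)$). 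On the dense homogeneous elements $\Phi_n$ acts as the projection onto $\sA_{(n)}$, i.e.\ $\Phi_n(\sA)=\sA_{(n)}$. Now, since $\sA$ is dense in $A$ and $\Phi_n$ is continuous, $A_{(n)}=\Phi_n(A)=\Phi_n(\overline{\sA})\su\overline{\Phi_n(\sA)}=\overline{\sA_{(n)}}$; the reverse inclusion $\overline{\sA_{(n)}}\su A_{(n)}$ is immediate because $\sA_{(n)}\su A_{(n)}$ and $A_{(n)}$ is norm-closed (being the $1$-eigenspace intersection $\bigcap_{z}\{a: \si_z(a)=z^{-n}a\}$ of continuous maps). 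Hence $A_{(n)}=\overline{\sA_{(n)}}$.

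\medskip

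\textbf{Main obstacle.} The only genuinely delicate point is the well-definedness and basic properties of the averaging integrals $\Phi_n$ — one must know that $w\mapsto w^n\si_w(a)$ is a continuous $A$-valued function so the Riemann (or Bochner) integral exists, which is exactly what strong continuity buys us, and that integration commutes with the continuous maps $\si_z$ and with passing to limits; these are routine once strong continuity is in hand, so in practice the proof is essentially the $\ep/3$ argument above followed by the bookkeeping with $\Phi_n$. Everything else is formal.
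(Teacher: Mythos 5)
Your proof is correct and follows essentially the same route as the paper's: establish the strongly continuous extension, then use the averaging projection $E_{(n)}\colon x\mapsto\int_{S^1}w^n\,\si_w(x)\,\dd w$ (which you call $\Phi_n$) to conclude that $\sA_{(n)}$ is dense in $A_{(n)}$. The paper delegates the strong continuity and the final density step to ``standard arguments,'' whereas you have usefully spelled out the $\ep/3$ argument and the chain $A_{(n)}=\Phi_n(\overline{\sA})\subseteq\overline{\Phi_n(\sA)}=\overline{\sA_{(n)}}$, but the underlying idea is identical.
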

\begin{proof}
Once $\sA_{(n)}$ is shown to be dense in $A_{(n)}$ the rest follows from standard arguments. 
Thus, for $n \in \zz$, define the bounded operator $E_{(n)} : A \to A_{(n)}$ by 
\[
E_{(n)} : x \mapsto \int_{S^1} w^n \, \si_w(x) \ \dd w \, ,
\]
where the integration is carried out with respect to the Haar-measure on $S^1$. We have that $E_{(n)}(x) = x$ for all $x \in A_{(n)}$ and then that $\| E_{(n)} \| \leq 1$. This implies that $\sA_{(n)} \su A_{(n)}$ is dense.
\end{proof}

Let now $d \in \nn$ and consider the unital $*$-subalgebra $\sA^{1/d} := \op_{n \in \zz} \sA_{(nd)} \su \sA$. Then $\sA^{1/d}$ is 
a $\zz$-graded unital $*$-algebra as well and we denote the associated circle action by $\si_w^{1/d} : \sA^{1/d} \to \sA^{1/d}$. 
Let $w \in S^1$ and choose a $z \in S^1$ such that $z^d = w$. Then
\[
\si_w^{1/d}(x_{nd}) = w^n \cd x_{nd} = z^{nd} \cd x_{nd} = \si_z(x_{nd})\, , \q\T{for all } \, x_{nd} \in \sA_{(nd)} \, , 
\]
and it follows that $\si_w^{1/d}(x) = \si_z(x)$ for all $x \in \sA^{1/d}$.  With the $C^*$-norm obtained by restriction 
$\|\cd \| : \sA^{1/d} \to [0,\infty)$, it follows in particular that
\[
\| \si_w^{1/d}(x) \| \leq \| x \|
\]
by our standing assumption on the compatibility of $\{ \si_w\}_{w \in S^1}$ with the norm on $\sA$.
The $C^*$-completion of $\sA^{1/d}$ is denoted by $A^{1/d}$.

\begin{prop}\label{p:isospesub}
Suppose that $\{\si_w\}_{w \in S^1}$ is semi-saturated on $A$ and let $d \in \nn$. Then we have unitary isomorphisms of Hilbert $C^*$-modules 
\[
(A_{(1)})^{\hot_{\phi} d} \simeq (A^{1/d})_{(1)} \q \M{and} \q (A_{(-1)})^{\hot_{\phi} d} \simeq (A^{1/d})_{(-1)}
\]
induced by the product $\psi: x_1 \olo x_d \mapsto x_1 \clc x_d$ . 
\end{prop}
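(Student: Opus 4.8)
The plan is to construct the unitary isomorphism explicitly from the product map $\psi$ and check it is isometric, surjective, and $A_{(0)}$-linear. First I would observe that the map $\psi : x_1 \otimes \cdots \otimes x_d \mapsto x_1 \cdots x_d$ is well defined on the algebraic tensor product $\sA_{(1)} \otimes_{\rm alg} \cdots \otimes_{\rm alg} \sA_{(1)}$, since multiplication in $\sA$ is $\cc$-bilinear, and that it descends to the balanced tensor product over $\sA_{(0)}$ because $(x_1 a) \cdot x_2 = x_1 \cdot (a x_2) = x_1 \cdot \phi(a)(x_2)$ for $a \in \sA_{(0)}$, exactly killing the relations of type \eqref{ns}. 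The image of $\psi$ is $(\sA_{(1)})^d$ in the notation of the proof of Proposition~\ref{p:semsat}, and this lands in $\sA_{(1)}^{1/d} := (\sA^{1/d})_{(1)} = \sA_{(d)}$ by the grading.

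Next I would verify that $\psi$ preserves inner products on simple tensors, which is the key computation. Using \eqref{eq:interprod} iteratively, the $A_{(0)}$-valued inner product of $x_1 \otimes \cdots \otimes x_d$ with $y_1 \otimes \cdots \otimes y_d$ in $(A_{(1)})^{\hot_\phi d}$ unwinds to $x_d^* \, x_{d-1}^* \cdots x_1^* \, y_1 \cdots y_{d-1} \, y_d = (x_1 \cdots x_d)^* (y_1 \cdots y_d)$, since $\phi$ acts by left multiplication and the inner product on each $A_{(1)}$ is $\inn{\xi,\eta} = \xi^*\eta$; this is precisely the inner product of $\psi(x_1 \otimes \cdots \otimes x_d)$ with $\psi(y_1 \otimes \cdots \otimes y_d)$ in $(A^{1/d})_{(1)}$. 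Hence $\psi$ is isometric on the dense algebraic submodule and extends to an isometry $(A_{(1)})^{\hot_\phi d} \to (A^{1/d})_{(1)}$ of Hilbert $C^*$-modules over $A_{(0)}$; right $A_{(0)}$-linearity is immediate from associativity of the product. An isometry between Hilbert $C^*$-modules automatically has closed range, so it remains only to see that the range is dense, i.e.\ that $(\sA_{(1)})^d = \sA_{(d)}$, equivalently that its closure is all of $(A^{1/d})_{(1)}$.

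The main obstacle — and the place where the semi-saturation hypothesis enters — is exactly this density/surjectivity step: showing that $(A_{(1)})^d$ is dense in $A_{(d)}$. But this is precisely the inductive claim already established inside the proof of Proposition~\ref{p:semsat}: by induction on $n$, $\M{span}\{x_1 \cdots x_n \mid x_i \in A_{(1)}\}$ is dense in $A_{(n)}$ when the action is semi-saturated (equivalently, when $A_{(1)}$ and $A_{(-1)}$ are full). Applying that result with $n = d$ gives density of the range of $\psi$, hence $\psi$ extends to a unitary isomorphism $(A_{(1)})^{\hot_\phi d} \simeq (A^{1/d})_{(1)}$. The statement for $A_{(-1)}$ follows by the same argument applied to the spectral subspace $A_{(-1)}$, using that $\psi^* : \la_{x_1} \otimes \cdots \otimes \la_{x_d} \mapsto \la_{x_d \cdots x_1}$ (or directly the product on $\sA_{(-1)}$), with density of $(A_{(-1)})^d$ in $A_{(-d)}$ again supplied by the symmetric half of the proof of Proposition~\ref{p:semsat}. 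One should also note that the iterated tensor power is taken with respect to $\phi$ throughout, which is consistent since $(A^{1/d})_{(1)}$ carries the left action of $(A^{1/d})_{(0)} = A_{(0)}$ by multiplication, matching $\phi$.
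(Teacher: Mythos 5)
Your proof follows the same skeleton as the paper's: show the product map $\psi$ is a well-defined inner-product-preserving module map (hence an isometry with closed range), and reduce everything to density of $(A_{(1)})^d$ in $A_{(d)} = (A^{1/d})_{(1)}$. The inner-product computation via iterated use of \eqref{eq:interprod} and the observation that an isometry of Hilbert $C^*$-modules has closed range are both correct and match the paper.

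The gap is in the density step. You appeal to the inductive argument inside the proof of Proposition~\ref{p:semsat}, asserting it applies ``when the action is semi-saturated (equivalently, when $A_{(1)}$ and $A_{(-1)}$ are full).'' That equivalence is false: Proposition~\ref{p:semsat} proves only the implication ``full $\Rightarrow$ semi-saturated,'' and the converse fails in general (e.g.\ $M_2(\cc)$ with the circle acting by conjugation with $\mathrm{diag}(1,z)$ is semi-saturated, yet $\langle A_{(1)},A_{(1)}\rangle = \cc e_{22}$ is a proper ideal in the diagonal $A_{(0)}$, so $A_{(1)}$ is not full). Moreover, the inductive argument in Proposition~\ref{p:semsat}'s proof uses fullness of $A_{(1)}$ essentially, namely in the step where $x\cdot u_m$ is replaced by $\sum_j x\,\xi_j^*\,\eta_j$ with $\xi_j,\eta_j\in A_{(1)}$; so it does not establish density under the stated hypothesis of semi-saturation alone. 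What the paper does at this point is invoke \cite[Prop.~4.8]{Exe:CPP}, which shows directly that for a semi-saturated action $A_{(n)}$ is the closed span of $n$-fold products from $A_{(1)}$. To repair your argument, either cite Exel's result, or strengthen the hypothesis of the proposition to Assumption~\ref{a:semisat} (which is what holds in all the later applications), or supply a direct rewriting argument showing that a monomial in $A_{(0)}\cup A_{(1)}\cup A_{(-1)}$ of net degree $d>0$ can be reduced to a product of $d$ elements of $A_{(1)}$.
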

\begin{proof}
We only  consider the case of $A_{(1)}$ since the the proof for $A_{(-1)}$ is the same.

Observe firstly that $(\sA^{1/d})_{(1)} = \sA_{(d)}$. Thus Lemma \ref{l:denspe} yields $A_{(d)} = (A^{1/d})_{(1)}$. This implies that the product $\psi : (\sA_{(1)})^{ \ot_{\sA_{(0)}} d} \to (\sA^{1/d})_{(1)}$ is a well-defined homomorphism of right modules over $\sA_{(0)}$ (here ``$\ot_{\sA_{(0)}}$'' refers to the algebraic tensor product of bimodules over $\sA_{(0)}$). Furthermore, since
\[
\inn{x_1 \olo x_d,y_1 \olo y_d} = x_d^* \clc x_1^* \cd y_1 \clc y_d \, ,
\]
we get that $\psi$ extends to a homomorphism $\psi : (A_{(1)})^{\hot_{\phi} d} \to A_{(1)}^{1/d}$ of Hilbert $C^*$-modules over $A_{(0)}$ with $\inn{ \psi(\xi), \psi(\eta)} = \inn{\xi,\eta}$ for all $\xi,\eta \in (A_{(1)})^{\hot_{\phi} d}$. 

It is therefore enough to show that $\T{Im}(\psi) \su (A^{1/d})_{(1)}$ is dense. But this is a consequence of \cite[Prop.~4.8]{Exe:CPP}.
\end{proof}

\begin{lemma}\label{l:semisateq}
Suppose that $\{ \si_w\}_{w \in S^1}$ satisfies the conditions of Assumption \ref{a:semisat}. Then $\{\si_w^{1/d}\}_{w \in S^1}$ satisfies the conditions of Assumption \ref{a:semisat} for all $d \in \nn$.
\end{lemma}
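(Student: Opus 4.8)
The plan is to verify the two conditions of Assumption~\ref{a:semisat} for the pair $(A^{1/d},\si_w^{1/d})$, reducing everything to facts already available for $(A,\si_w)$. Condition~(1) is immediate: the degree-zero part of $\sA^{1/d}$ is $\sA_{(0)}$ itself, so by Lemma~\ref{l:denspe} applied to both $\sA$ and $\sA^{1/d}$ we have $(A^{1/d})_{(0)} = \overline{\sA_{(0)}} = A_{(0)}$, which is separable by hypothesis. For condition~(2) the key observation is that the fullness of $A_{(1)}$ and $A_{(-1)}$ built into Assumption~\ref{a:semisat} lets us invoke Proposition~\ref{p:semsat}, so $\{\si_w\}_{w\in S^1}$ is semi-saturated; Proposition~\ref{p:isospesub} then applies and produces unitary isomorphisms of Hilbert $C^*$-modules over $A_{(0)}$
\[
(A^{1/d})_{(1)} \simeq (A_{(1)})^{\hot_\phi d}
\q\M{and}\q
(A^{1/d})_{(-1)} \simeq (A_{(-1)})^{\hot_\phi d}\, .
\]
Since a unitary isomorphism of Hilbert $C^*$-modules preserves both fullness and countable generation, the whole lemma comes down to showing that the $d$-fold interior tensor powers $(A_{(\pm1)})^{\hot_\phi d}$ are full and countably generated over $A_{(0)}$.

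Countable generation is inherited by interior tensor products --- the interior tensor product of two countably generated Hilbert $C^*$-modules is again countably generated, see \cite{La95} --- so an iteration settles that half. The substantive point is fullness, which I would prove by induction on $d$, the case $d=1$ being part of Assumption~\ref{a:semisat}. For the inductive step set $F := (A_{(1)})^{\hot_\phi(d-1)}$ and use the inner product formula~\eqref{eq:interprod}: the closed ideal $\inn{A_{(1)}\hot_\phi F,\, A_{(1)}\hot_\phi F}$ of $A_{(0)}$ is the closed linear span of the elements $\inn{\eta_1,\phi(\inn{\xi_1,\xi_2})\eta_2}$ with $\xi_1,\xi_2\in A_{(1)}$ and $\eta_1,\eta_2\in F$. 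Fullness of $A_{(1)}$ makes the $\inn{\xi_1,\xi_2}$ span a dense subspace of $A_{(0)}$, and continuity of $b\mapsto\inn{\eta_1,\phi(b)\eta_2}$ then puts $\inn{\eta_1,\phi(b)\eta_2}$ into this closed span for every $b\in A_{(0)}$; feeding in an approximate unit $\{u_m\}$ of $A_{(0)}$, and using that the left action of $A_{(0)}$ on $F$ is non-degenerate (which holds since it is non-degenerate already on the first tensor factor $A_{(1)}$, where $\phi(A_{(0)}) = \sK(A_{(1)})$ by the lemma preceding Proposition~\ref{p:semsat}), so that $\phi(u_m)\eta_2\to\eta_2$, one obtains $\inn{\eta_1,\eta_2}$ in the span for all $\eta_1,\eta_2\in F$, hence all of $\inn{F,F}$, which is dense in $A_{(0)}$ by the inductive hypothesis. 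Thus $A_{(1)}\hot_\phi F$ is full; the same argument handles $(A_{(-1)})^{\hot_\phi d}$, and condition~(2) follows, completing the lemma.

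I expect the fullness computation to be the only step requiring a little care, though its content is mild --- essentially repeated use of fullness of $A_{(1)}$ and $A_{(-1)}$ together with an approximate-unit argument; indeed, if one preferred to avoid interior tensor products altogether one could instead note that semi-saturation gives $(A^{1/d})_{(\pm1)} = \overline{(A_{(\pm1)})^d}$ and then obtain density of $\inn{(A^{1/d})_{(\pm1)},(A^{1/d})_{(\pm1)}}$ in $A_{(0)}$ directly by absorbing products $\xi^*\eta$, $\xi,\eta\in A_{(1)}$, into $A_{(0)}$. The genuinely useful input is Proposition~\ref{p:isospesub}: it is precisely what turns this statement about the rescaled circle action into a routine statement about Hilbert-module tensor powers.
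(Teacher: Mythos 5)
Your proof is correct and takes essentially the same route as the paper: both invoke Proposition~\ref{p:semsat} to get semi-saturation and then Proposition~\ref{p:isospesub} to identify $(A^{1/d})_{(\pm1)}$ with the interior tensor powers $(A_{(\pm1)})^{\hot_\phi d}$, reducing everything to fullness and countable generation of those tensor powers. The paper simply asserts this last step follows from the hypotheses on $A_{(\pm1)}$, while you supply the (correct) inductive approximate-unit argument; the mathematical content is the same.
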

\begin{proof}
We only need to show that the Hilbert $C^*$-modules $A_{(d)}$ and $A_{(-d)}$ are full and countably generated over $A_{(0)}$.

By Proposition \ref{p:semsat} we have that $\{\si_w\}_{w \in S^1}$ is semi-saturated. It thus follows from Proposition \ref{p:isospesub} that
\begin{equation}\label{eq:isospesub}
A_{(d)} \simeq (A_{(1)})^{\hot_\phi d} \q \T{and} \q
A_{(-d)} \simeq (A_{(-1)})^{\hot_\phi d} \ .
\end{equation}
Since both $A_{(1)}$ and $A_{(-1)}$ are full and countably generated by assumption these unitary isomorphisms prove the lemma.
\end{proof}

The following result is a stronger version of Theorem~\ref{t:pimcir} since it incorporates all the spectral subspaces and not just the first one.

\begin{theorem}\label{t:pimcirsub}
Suppose that the circle action $\{\si_w\}_{w \in S^1}$ on $A$ satisfies the conditions in Assumption \ref{a:semisat}. Then the Pimsner algebra 
$\C O_{A_{(d)}} \simeq \C O_{(A_{(1)})^{\hot d}}$ is isomorphic to the $C^*$-algebra $A^{1/d}$ for all $d \in \nn$. The isomorphism is given by $S_\xi \mapsto \xi$ for all $\xi \in A_{(d)}$.
\end{theorem}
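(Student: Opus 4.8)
The plan is to deduce Theorem~\ref{t:pimcirsub} from the already-established Theorem~\ref{t:pimcir} applied not to the original action $\{\si_w\}_{w \in S^1}$ on $A$, but to the action $\{\si_w^{1/d}\}_{w \in S^1}$ on the $C^*$-algebra $A^{1/d}$ obtained by grading down by $d$. The whole point is that the machinery for this substitution has been prepared in the preceding lemmas. First I would invoke Lemma~\ref{l:semisateq}: since $\{\si_w\}_{w\in S^1}$ satisfies Assumption~\ref{a:semisat}, so does $\{\si_w^{1/d}\}_{w\in S^1}$, so in particular $(A^{1/d})_{(1)}$ and $(A^{1/d})_{(-1)}$ are full and countably generated over $(A^{1/d})_{(0)} = A_{(0)}$. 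Hence the pair $(\phi, (A^{1/d})_{(1)})$ satisfies Assumption~\ref{a:hilmod} by the Lemma preceding Proposition~\ref{p:semsat}, and Theorem~\ref{t:pimcir} applies verbatim to $A^{1/d}$: the Pimsner algebra $\C O_{(A^{1/d})_{(1)}}$ is isomorphic to $A^{1/d}$ via $S_\xi \mapsto \xi$ for all $\xi \in (A^{1/d})_{(1)}$.

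The second ingredient is the identification of the generating module. By Lemma~\ref{l:denspe} applied to the $\zz$-graded $*$-algebra $\sA^{1/d}$ (whose degree-one part is $\sA_{(d)}$), we have $(A^{1/d})_{(1)} = A_{(d)}$ as Hilbert $C^*$-modules over $A_{(0)}$; this is already recorded inside the proof of Proposition~\ref{p:isospesub}. Combining this with Proposition~\ref{p:isospesub} itself — which uses that $\{\si_w\}$ is semi-saturated, guaranteed here by Proposition~\ref{p:semsat} — gives the unitary isomorphism of Hilbert $C^*$-modules
\[
(A_{(1)})^{\hot_\phi d} \;\simeq\; A_{(d)} \;=\; (A^{1/d})_{(1)}
\]
induced by the product map $\psi : x_1 \olo x_d \mapsto x_1 \clc x_d$. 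A unitary isomorphism of Hilbert $C^*$-modules intertwining the left actions induces a canonical isomorphism of the associated Pimsner algebras (this is immediate from the universal property, Theorem~\ref{t:unipro}: the generators $S_{\psi(\xi_1 \olo \xi_d)}$ on one side and the products $S_{\xi_1} \clc S_{\xi_d}$ on the other satisfy the same relations (1)--(4)). Therefore $\C O_{A_{(d)}} \simeq \C O_{(A_{(1)})^{\hot d}}$, which is the first isomorphism asserted in the theorem.

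Stringing these together yields
\[
\C O_{A_{(d)}} \;\simeq\; \C O_{(A^{1/d})_{(1)}} \;\simeq\; A^{1/d},
\]
and chasing the generators through each step, $S_\xi \in \C O_{A_{(d)}}$ for $\xi \in A_{(d)}$ maps first to $S_\xi \in \C O_{(A^{1/d})_{(1)}}$ under the equality $A_{(d)} = (A^{1/d})_{(1)}$, and then to $\xi \in A^{1/d}$ under the Theorem~\ref{t:pimcir} isomorphism, which is exactly the claimed formula $S_\xi \mapsto \xi$. The only mild subtlety — and the step I would be most careful about — is making sure all the identifications are compatible at the level of the left $A_{(0)}$-module structure $\phi$, not merely the inner products: one must check that the unitary $\psi$ of Proposition~\ref{p:isospesub} intertwines $\phi$ on $(A_{(1)})^{\hot_\phi d}$ with multiplication on $A_{(d)} \su A^{1/d}$, which is clear since $\psi(a \cdot (x_1 \olo x_d)) = a\, x_1 \clc x_d$ for $a \in A_{(0)}$, so that the composite isomorphism $\C O_{(A_{(1)})^{\hot d}} \to A^{1/d}$ restricts to the identity on the copy of $A_{(0)}$. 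Everything else is a direct citation of the preceding results, so the argument is short.
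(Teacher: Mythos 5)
Your proposal is correct and follows the paper's own route exactly: the paper's proof is the one-liner ``combine Lemma~\ref{l:semisateq}, Proposition~\ref{p:isospesub} and Theorem~\ref{t:pimcir},'' which is precisely the chain you construct. You merely spell out two steps the paper leaves implicit — that a unitary Hilbert-module isomorphism intertwining the left $\phi$-actions induces a Pimsner-algebra isomorphism via the universal property, and that the product map $\psi$ does intertwine those actions — both of which are correct and worth making explicit.
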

\begin{proof}
This follows by combining Lemma \ref{l:semisateq}, Proposition \ref{p:isospesub} and Theorem~\ref{t:pimcir}.
\end{proof}

We finally investigate what happens when the $C^*$-norm on $\sA = \op_{n \in \zz} \sA_{(n)}$ is changed. Thus, let $\| \cd \|' : \sA \to [0,\infty)$ be an alternative $C^*$-norm on $\sA$ satisfying
\[
\| \si_w(x) \|' \leq \| x \|' \q \T{for all } \, w \in S^1 \T{ and } x \in \sA \, .
\]
The corresponding completion $A'$ will carry an induced circle action $\{ \si_w'\}_{w \in S^1}$. 
The next theorem can be seen as a manifestation of the gauge-invariant uniqueness theorem, \cite[Thm. 6.2 and Thm. 6.4]{Ka04}. This property was indirectly used already in \cite[Thm. 3.12]{Pim:CCC} for the proof of the universal properties of Pimsner algebras.

\begin{theorem}\label{t:norequpim}
Suppose that $\| x \| = \| x \|'$ for all $x \in \sA_{(0)}$. Then $\{ \si_w\}_{w \in S^1}$ satisfies the conditions of Assumption \ref{a:semisat} if and only if $\{\si_w'\}$ satisfies the conditions of Assumption \ref{a:semisat}. And in this case, the identity map $\sA \to \sA$ induces an isomorphism $A \to A'$ of $C^*$-algebras. In particular, we have that $\| x \| = \| x \|'$ for all $x \in \sA$.
\end{theorem}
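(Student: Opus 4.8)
The plan is to reduce everything to the universal property of Pimsner algebras (Theorem~\ref{t:unipro}) together with the identification of $A$ (resp. $A'$) as the Pimsner algebra of the first spectral subspace (Theorem~\ref{t:pimcir}). The crucial observation is that the hypothesis $\|x\| = \|x\|'$ on $\sA_{(0)}$, combined with the fact that the $\sA_{(0)}$-valued inner product on $\sA_{(n)}$ is computed purely algebraically as $\inn{\xi,\eta} = \xi^*\eta$, forces the two norms to agree on each spectral subspace before we even invoke the Pimsner machinery. So the first step is: show that for every $n\in\zz$ and every $x\in\sA_{(n)}$ one has $\|x\| = \|x\|'$. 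Indeed $\|x\|^2 = \|x^*x\|$ with $x^*x \in \sA_{(0)}$, and likewise for $\|\cdot\|'$; since the two norms agree on $\sA_{(0)}$ they agree on $\sA_{(n)}$. In particular the Hilbert $C^*$-module structures on the completions of $\sA_{(n)}$ coming from $A$ and from $A'$ coincide.

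The second step is the equivalence of the two instances of Assumption~\ref{a:semisat}. Separability of $A_{(0)}$ versus $A'_{(0)}$ is immediate since by Step~1 these are the same $C^*$-algebra (same completion of $\sA_{(0)}$). For the fullness and countable generation of $A_{(\pm 1)}$ versus $A'_{(\pm 1)}$: by Step~1 the completion $\overline{\sA_{(1)}}$ inside $A$ is isometrically the same Hilbert $C^*$-module as the completion inside $A'$ — the inner product $\xi^*\eta$ lands in $\sA_{(0)}$ and its norm is unchanged — and fullness is the statement that $\T{span}\{\xi^*\eta\}$ is dense in $A_{(0)} = A'_{(0)}$, which is a norm-independent statement once the norm on $\sA_{(0)}$ is fixed; similarly for countable generation. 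Hence Assumption~\ref{a:semisat} holds for $\{\si_w\}$ iff it holds for $\{\si_w'\}$.

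For the third step, assume the common validity of Assumption~\ref{a:semisat}. By Theorem~\ref{t:pimcir} applied to $A$ we have $\C O_{A_{(1)}} \cong A$ via $S_\xi \mapsto \xi$, and applied to $A'$ we have $\C O_{A'_{(1)}} \cong A'$ via $S_\xi\mapsto \xi$. By Step~1, $A_{(1)}$ and $A'_{(1)}$ are literally the same Hilbert $C^*$-module $E$ over the same $C^*$-algebra $B = \overline{\sA_{(0)}}$ with the same left action $\phi$, so $\C O_{A_{(1)}} = \C O_{A'_{(1)}} = \C O_E$ as a single $C^*$-algebra. Composing the two isomorphisms gives a $C^*$-algebra isomorphism $A \xrightarrow{\sim} \C O_E \xrightarrow{\sim} A'$ which sends $\xi \in \sA_{(1)} \su A$ to $S_\xi$ to $\xi \in \sA_{(1)}\su A'$, hence is the identity on $\sA_{(1)}$ and on $\sA_{(0)}$. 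Since by Proposition~\ref{p:semsat} the algebras $A$ and $A'$ are generated by their zeroth and first spectral subspaces (and one applies the same argument with $A_{(-1)}$ to cover the negative part, or simply notes that $\sum_{n\in\zz}\sA_{(n)}$ is dense and a $*$-homomorphism is determined on a generating $*$-subalgebra), this isomorphism is the extension of the identity map $\sA\to\sA$. The equality $\|x\|=\|x\|'$ for all $x\in\sA$ then follows because an isomorphism of $C^*$-algebras is isometric.

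The main obstacle, such as it is, lies in Step~2: one must be careful that fullness and countable generation of the completed module genuinely depend only on the $C^*$-algebra $\sA_{(0)}$ with its fixed norm and not on the ambient algebra. Once Step~1 is in hand this is routine — the inner product is the same map $\sA_{(1)}\times\sA_{(1)}\to\sA_{(0)}$ in both cases — but it is the only place where one might worry that the two completions $\overline{\sA_{(1)}}^{\|\cdot\|}$ and $\overline{\sA_{(1)}}^{\|\cdot\|'}$ could differ; Step~1 rules this out by showing the two norms on $\sA_{(1)}$ coincide.
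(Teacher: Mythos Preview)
Your proposal is correct and follows essentially the same route as the paper: first observe that $\|x\|^2 = \|x^*x\|$ with $x^*x\in\sA_{(0)}$ forces the two norms to agree on every $\sA_{(n)}$, so that $A_{(\pm 1)}$ and $A'_{(\pm 1)}$ are the same Hilbert $C^*$-modules over the same base $B$; then Assumption~\ref{a:semisat} transfers, and Theorem~\ref{t:pimcir} gives $A\simeq\C O_{A_{(1)}}=\C O_{A'_{(1)}}\simeq A'$. The paper's argument is the same, only terser (it also records the induced isomorphisms of tensor powers $(A_{(1)})^{\hot_\phi n}\simeq (A'_{(1)})^{\hot_\phi n}$, but these are not actually needed for the conclusion).
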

\begin{proof}
Remark first that the identity map $\sA_{(n)} \to \sA_{(n)}$ induces an isometric isomorphism of Hilbert $C^*$-modules $A_{(n)} \to A_{(n)}'$ for all $n \in \zz$. This is a consequence of the identity $\| x \| = \| x \|'$ for all $x \in \sA_{(0)}$. But then we also have isomorphisms
\[
(A_{(1)})^{\hot_\phi n} \simeq (A_{(1)}')^{\hot_\phi n} \q \T{and} \q (A_{(-1)})^{\hot_\phi n} \simeq (A_{(-1)}')^{\hot_\phi n}
\]
for all $n \in \nn$. These observations imply that $\{ \si_w\}_{w \in S^1}$ satisfies the conditions of Assumption \ref{a:semisat} if and only if $\{\si_w'\}$ satisfies the conditions of Assumption \ref{a:semisat}. But it then follows from Theorem~\ref{t:pimcir} that
\[
A \simeq \C O_{A_{(1)}} \simeq \C O_{A_{(1)}'} \simeq A' \, ,
\]
with corresponding isomorphism $A \simeq A'$ induced by the identity map $\sA \to \sA$.
\end{proof}

\section{Quantum principal bundles and $\zz$-graded algebras}
We start by recalling the definition of a quantum principal $U(1)$-bundle. 

Later on in the paper we shall exhibit a novel class of quantum lens spaces as principal 
$U(1)$-bundles over quantum weighted projective lines with arbitrary weights.

\subsection{Quantum principal bundles}
Define the unital complex algebra 
$$
\C O(U(1)) := \cc[z,z^{-1}]/\inn{1 - z z^{-1}}
$$ 
where $\inn{1 - z z^{-1}}$ denotes the ideal generated by $1 - z z^{-1}$ in the polynomial algebra $\cc[z,z^{-1}]$ in two variables. The algebra $\C O(U(1))$ is a Hopf algebra by defining, for all $n \in \zz$,  coproduct $\De : z^n \mapsto z^n \ot z^n$,  antipode $S : z^n \mapsto z^{-n}$ and counit $\ep : z^n \mapsto 1$. We simply write $\C O(U(1)) = \big( \C O(U(1)), \De, S, \ep\big)$ for short. 

Let $\sA$ be a complex unital algebra and suppose in addition that it is a right comodule algebra over $\C O(U(1))$, that is 
we have a homomorphism of unital algebras
\[
\De_R : \sA \to \sA \ot \C O(U(1)) \, ,
\]
which also provides a coaction of the Hopf algebra $\C O(U(1))$ on $\sA$. 

Let $\sB := \{ x \in \sA \mid \De_R(x) = x \ot 1 \}$ denote the unital subalgebra of $\sA$ consisting of coinvariant elements for the coaction.

\begin{dfn}\label{d:quapri}
One says that the datum $\big( \sA, \C O(U(1)), \sB \big)$ is a \emph{quantum principal $U(1)$-bundle} when the \emph{canonical map}
\[
\T{can} : \sA \ot_{\sB} \sA \to \sA \ot \C O(U(1)) \, , \q x \ot y \mapsto x \cd \De_R(y) \, ,
\]
is an isomorphism.
\end{dfn}

\begin{remark}
One ought to qualify Definition \ref{d:quapri}  by saying that the quantum principal bundle is `for the universal differential calculus'
\cite{ BrMa:QGQ}.
In fact, the definition above means that the right comodule algebra $\sA$ is a \emph{$\sB$-Galois extension}, and this is equivalent (in the present context) by \cite[Prop.~1.6]{Haj:SCQ} to the bundle being a quantum principal bundle for the universal differential calculus. 
\end{remark}

\subsection{Relation with $\zz$-graded algebras}\label{se:rzga}
We now provide a detailed analysis of the case where the quantum principal bundle structure comes from a $\zz$-grading of the `total space' algebra. This will lead to an alternative characterization of quantum $U(1)$-principal bundles in this setting. While this description is not new
(see for instance \cite[Lemma 5.1]{SiVe:RDN}), it is certainly more manageable. In particular, we will apply it in \S\ref{se:qls} below for the case of quantum lens spaces as $U(1)$-principal bundles over quantum weighted projective lines. 

Let $\sA = \op_{n \in \zz} \sA_{(n)}$ be a $\zz$-graded unital algebra and let $\C O(U(1))$ be the Hopf algebra defined in the previous section. Define the unital algebra homomorphism
\[
\De_R : \sA \to \sA \ot \C O(U(1)) \q x \mapsto x \ot z^{-n} \, , \, \, \, \T{for} \, \, \, x \in \sA_{(n)} \, .
\]
It is then clear that $\De_R$ turns $\sA$ into a right comodule algebra over $\C O(U(1))$. The unital subalgebra of coinvariant elements coincides with $\sA_{(0)}$.

\begin{theorem}\label{t:quapriseq}
The triple $\big( \sA, \C O(U(1)), \sA_{(0)} \big)$ is a quantum principal $U(1)$-bundle if and only if there exist finite sequences
\[
\{ \xi_j \}_{j = 1}^N \, , \, \, \{\be_i\}_{i = 1}^M \mbox{ in } \sA_{(1)} \q \M{and} \q
\{\eta_j\}_{j = 1}^N \, , \, \, \{\al_i\}_{i = 1}^M \mbox{ in } \sA_{(-1)}
\]
such that there hold identities:
\[
\sum_{j = 1}^N \xi_j \eta_j = 1_{\sA} = \sum_{i = 1}^M \al_i \be_i \, .
\]
\end{theorem}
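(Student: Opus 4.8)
The plan is to unwind the isomorphism condition on the canonical map into a concrete statement about the grading. The key observation is that, because of the $\zz$-grading, the map $\T{can} : \sA \ot_{\sB} \sA \to \sA \ot \C O(U(1))$ respects the natural gradings on both sides: on the right, $\sA \ot \C O(U(1)) = \op_{n} \sA \ot \cc z^n$, and the component of $\T{can}$ landing in $\sA \ot \cc z^{-n}$ is the map $\T{can}_n : \op_{k} \sA_{(k)} \ot_{\sB} \sA_{(n-k)} \to \sA \cd \sA_{(n)} \subseteq \sA$, $x \ot y \mapsto x y$. So $\T{can}$ is an isomorphism if and only if each $\T{can}_n$ is. I would reduce the whole statement to the cases $n = \pm 1$: surjectivity of $\T{can}_1$ onto (a suitable piece guaranteeing) $1_\sA$ and of $\T{can}_{-1}$ likewise, and then show these two conditions generate all the others. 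Concretely, $\T{can}$ surjective means in particular that $1_\sA \ot z^{-1}$ is hit, i.e. there are $\xi_j \in \sA_{(1)}$, $\eta_j \in \sA_{(-1)}$ with $\sum_j \xi_j \ot z^{-1} = \T{can}(\sum_j \xi_j \ot_\sB \eta_j) = \sum_j \xi_j \De_R(\eta_j) = \sum_j \xi_j \eta_j \ot z^{-1}$, hence $\sum_j \xi_j \eta_j = 1_\sA$; symmetrically from $1_\sA \ot z$ one gets $\al_i \in \sA_{(-1)}$, $\be_i \in \sA_{(1)}$ with $\sum_i \al_i \be_i = 1_\sA$. That is the ``only if'' direction, and it is essentially immediate.

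For the ``if'' direction, assume the two partitions-of-unity exist. I would write down an explicit two-sided inverse for $\T{can}$. Define $\T{can}^{-1} : \sA \ot \C O(U(1)) \to \sA \ot_\sB \sA$ on homogeneous elements by declaring, for $x \in \sA$ and $n \geq 0$,
\[
\T{can}^{-1}(x \ot z^{-n}) := \sum_{j_1,\ldots,j_n} x \xi_{j_1} \cdots \xi_{j_n} \ot_\sB \eta_{j_n} \cdots \eta_{j_1} ,
\]
using the $\{\xi_j\},\{\eta_j\}$ sequences, and for $n > 0$,
\[
\T{can}^{-1}(x \ot z^{n}) := \sum_{i_1,\ldots,i_n} x \al_{i_1} \cdots \al_{i_n} \ot_\sB \be_{i_n} \cdots \be_{i_1} ,
\]
using the $\{\al_i\},\{\be_i\}$ sequences, with $\T{can}^{-1}(x \ot 1) = x \ot_\sB 1$. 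The composite $\T{can} \circ \T{can}^{-1}$ applied to $x \ot z^{-n}$ gives $\sum x \xi_{j_1}\cdots\xi_{j_n} \eta_{j_n} \cdots \eta_{j_1} \ot z^{-n}$, which collapses to $x \ot z^{-n}$ by iterating $\sum_j \xi_j \eta_j = 1_\sA$ from the inside out; similarly on $\sA \ot \cc z^n$ using $\sum_i \al_i \be_i = 1_\sA$. For the other composite $\T{can}^{-1} \circ \T{can}$, one checks on a homogeneous simple tensor $x \ot_\sB y$ with $y \in \sA_{(m)}$; here the computation uses that for $y \in \sA_{(m)}$ with $m \geq 0$ one can move factors across $\ot_\sB$ because $\eta_{j_m} \cdots \eta_{j_1} \cdot (\text{something in } \sA_{(m)}) \in \sB = \sA_{(0)}$, and then reassemble using the partition of unity — essentially the standard ``translation map'' bookkeeping for Hopf–Galois extensions with a cyclic grading group.

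The main obstacle I expect is the verification that $\T{can}^{-1}$ is well defined on $\sA \ot_\sB \sA$ (i.e. descends over the balanced tensor product) and that $\T{can}^{-1} \circ \T{can} = \T{id}$; this is where one must use both sequences in tandem and be careful that the two expressions for $\T{can}^{-1}$ on positive and negative degrees are mutually consistent when composed with $\T{can}$, which only maps into nonpositive powers $z^{-n}$, $n \geq 0$, for elements of $\sA_{(\geq 0)}$ and nonnegative powers for $\sA_{(\leq 0)}$. Rather than juggle signs, a cleaner route — and the one I would actually write — is: it suffices (since $\T{can}$ is a left $\sA$-module map and the target is generated as a left $\sA$-module by $1 \ot z^n$, $n \in \zz$) to exhibit, for each $n$, a preimage of $1_\sA \ot z^n$ and to prove $\T{can}$ injective. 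Surjectivity then follows by multiplying preimages of $1 \ot z^{\pm 1}$ and using that $z^n$ generates $\C O(U(1))$; injectivity follows because $\sA \ot_\sB \sA = \op_n (\op_k \sA_{(k)} \ot_\sB \sA_{(n-k)})$ and on each graded piece $\T{can}_n$ has the explicit section above as a left inverse, forcing it to be bijective. This avoids the global well-definedness headache by working one grading degree at a time, at the cost of a short induction on $n$ to produce the preimages of $1 \ot z^{\pm n}$ from those of $1 \ot z^{\pm 1}$.
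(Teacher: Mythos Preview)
Your approach is essentially the paper's: for the ``only if'' direction, read off the sequences from preimages of $1_\sA \ot z^{\pm 1}$ under $\T{can}$; for the ``if'' direction, write an explicit two-sided inverse using iterated products of the $\xi_j,\eta_j$ (resp.\ $\al_i,\be_i$), and then verify both composites are the identity. The paper does exactly this and declares the final check ``straightforward''.

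Two small corrections. First, with the convention $\De_R(y) = y \ot z^{-m}$ for $y \in \sA_{(m)}$, the second leg of a preimage of $1_\sA \ot z$ lies in $\sA_{(-1)}$, not $\sA_{(1)}$; your assignment of $z$ versus $z^{-1}$ to the two sequences is flipped, and your description of the domain of $\T{can}_n$ should be $\sA \ot_{\sA_{(0)}} \sA_{(n)}$ (grading by the second leg only), not the total-degree piece $\op_k \sA_{(k)} \ot_{\sA_{(0)}} \sA_{(n-k)}$. Second, your ``main obstacle'' is misplaced: $\T{can}^{-1}$ has domain $\sA \ot \C O(U(1))$, an ordinary tensor product over $\cc$, so there is no balancedness to check. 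The only real work is verifying $\T{can}^{-1}\circ\T{can}=\T{id}$, and you already sketched it correctly: for $y\in\sA_{(m)}$ the product $y\cdot\xi_{j_1}\cdots\xi_{j_{|m|}}$ (or with the $\al_i$'s) lies in $\sA_{(0)}$ and slides across $\ot_{\sA_{(0)}}$, after which the partition of unity collapses everything. Your proposed ``cleaner route'' does not circumvent this computation; it is the same one.
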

\begin{proof}
Suppose first that $\big( \sA, \C O(U(1)), \sA_{(0)} \big)$ is a quantum principal $U(1)$-bundle. Thus, that the canonical map
\[
\T{can} : \sA \ot_{\sA_{(0)}} \sA \to \sA \ot \C O(U(1))
\]
is an isomorphism.
For each $n \in \zz$, define the idempotents
\[
\begin{array}{ll}
P_{(n)} : \C O(U(1)) \to \C O(U(1)) \, , \q & P_{(n)} : z^m \mapsto \de_{nm} z^m \q \mbox{and} \\
\\
E_{(n)} : \sA \to \sA \, , \q & E_{(n)} : x_m \mapsto \de_{nm} x_m 
\end{array}
\]
where $x_m \in \sA_{(m)}$ and where $\de_{nm} \in \{0,1\}$ denotes the Kronecker delta.
Clearly, 
\begin{equation}\label{eq:cancom}
\T{can} \ci (1 \ot E_{(-n)}) = (1 \ot P_{(n)}) \ci \T{can} : \sA \ot_{\sA_{(0)}} \sA \to \sA \ot \C O(U(1)) \, .
\end{equation}
for all $n \in \zz$. Let us now define the element 
\[
\ga := \T{can}^{-1}(1_{\sA} \ot z) = \sum_{j = 1}^N \ga^0_j \ot \ga_j^1 \, .
\]
It then follows from \eqref{eq:cancom} that
\[
\ga = (1 \ot E_{(-1)})(\ga) = \sum_{j = 1}^N \ga^0_j \ot E_{(-1)}(\ga_j^1)
\]
To continue, we remark that
\[
m(\ga) = m \ci \T{can}^{-1}(1_{\sA} \ot z) = (\T{id} \ot \ep)(1_{\sA} \ot z) = 1_{\sA}
\]
where $m : \sA \ot_{\sA_{(0)}} \sA \to \sA$ is the algebra multiplication. And this implies that
\[
1_{\sA} = \sum_{j = 1} \ga^0_j \cd E_{(-1)}(\ga_j^1) = \sum_{j = 1}^N E_{(1)}(\ga^0_j) \cd E_{(-1)}(\ga_j^1) \, .
\]
We therefore put, 
\[
\xi_j := E_{(1)}(\ga_0^j) \q \T{and} \q \eta_j := E_{(-1)}(\ga_1^j) \, ,  \q \T{for all} \, \, \,  j = 1,\ldots,N \, .
\]

Next, we define the element
\[
\de := \T{can}^{-1}(1_{\sA} \ot z^{-1}) = \sum_{i = 1}^M \de^0_i \ot \de_i^1 \, .
\]
An argument similar to the one before then shows that $\sum_{i = 1}^M \al_i \cd \be_i = 1_{\sA}$, with 
\[
\al_i := E_{(-1)}(\de^0_i) \q \T{and} \q \be_i := E_{(1)}(\de^1_i) \, ,  \q \T{for all} \, \, \,  i = 1,\ldots,M \ .
\]
This proves the first half of the theorem.

To prove the second half we suppose that there exist sequences 
$\{ \xi_j \}_{j = 1}^N$, $\{\be_i\}_{i = 1}^M $ in $\sA_{(1)}$ and 
$\{\eta_j\}_{j = 1}^N$, $\{\al_i\}_{i = 1}^M $in $\sA_{(-1)}$
such that $\sum_{j = 1}^N \xi_j \eta_j = 1_{\sA} = \sum_{i = 1}^M \al_i \be_i$. 

We then define the map $\M{can}^{-1} : \sA \ot \C O(U(1)) \to \sA \ot_{\sA_{(0)}} \sA$ by the formula
\[
\M{can}^{-1} : x \ot z^n \mapsto 
\begin{cases}
\sum_{J \in \{1,\ldots,N\}^n} \, x \, \xi_{j_1} \clc \xi_{j_n} \ot \eta_{j_n} \clc \eta_{j_1} \, ,
& \M{for} \,\,\, n \geq 0  \\ \\
\sum_{I \in \{1,\ldots,M\}^{-n}} \, x \, \al_{i_1} \clc \al_{i_{-n}} \ot \be_{i_{-n}} \clc \be_{i_1} \, ,
& \M{for} \,\,\,n \leq 0
\end{cases} \,\, .
\]
It is then straightforward to check that
\[
\M{can}^{-1} \ci \M{can} = \T{id} \q \M{and} \q
\M{can} \ci \M{can}^{-1} = \T{id} \, .
\]
This ends the proof of the theorem.
\end{proof}

\begin{remark}
The above theorem shows that $\big( \sA, \C O(U(1)), \sA_{(0)} \big)$ is a quantum principal $U(1)$-bundle if and only if $\sA$ is \emph{strongly} $\zz$-graded, see \cite[Lem.~I.3.2]{NaVO82}. Our next corollary is thus a consequence of \cite[Cor.~I.3.3]{NaVO82}. We present a proof here since we need the explicit form of the idempotents later on.
\end{remark}

\begin{cor}\label{co:mnfpm}
With the same conditions as in Theorem \ref{t:quapriseq}. 
The right-modules $\sA_{(1)}$ and $\sA_{(-1)}$ are finitely generated and projective over $\sA_{(0)}$.
\end{cor}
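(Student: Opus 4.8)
The plan is to produce, directly from the two identities furnished by Theorem~\ref{t:quapriseq}, explicit finite \emph{dual bases} for the right $\sA_{(0)}$-modules $\sA_{(1)}$ and $\sA_{(-1)}$; recall that a right module $M$ over a unital ring $R$ is finitely generated and projective exactly when there are finitely many elements $m_k \in M$ and right $R$-linear maps $\ph_k : M \to R$ with $m = \sum_k m_k \, \ph_k(m)$ for all $m \in M$. Besides yielding the conclusion, this approach has the advantage of exhibiting the idempotent matrices over $\sA_{(0)}$ representing $\sA_{(1)}$ and $\sA_{(-1)}$, which is precisely what is needed later on.

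For $\sA_{(1)}$ I would start from $\sum_{j = 1}^N \xi_j \eta_j = 1_\sA$ with $\xi_j \in \sA_{(1)}$ and $\eta_j \in \sA_{(-1)}$. Since $\sA_{(-1)} \cd \sA_{(1)} \su \sA_{(0)}$ by the grading, each assignment $\ph_j : \sA_{(1)} \to \sA_{(0)}$, $x \mapsto \eta_j \, x$, is well defined, and it is right $\sA_{(0)}$-linear because the multiplication of $\sA$ is associative. The key point is then the one-line computation
\[
\sum_{j = 1}^N \xi_j \, \ph_j(x) \; = \; \sum_{j = 1}^N \xi_j \eta_j \, x \; = \; x \, , \qquad x \in \sA_{(1)} \, ,
\]
so that $\{(\xi_j,\ph_j)\}_{j = 1}^N$ is a dual basis: the right-module map $\sA_{(0)}^{\,N} \to \sA_{(1)}$, $(a_j)_j \mapsto \sum_j \xi_j a_j$, is split surjective with splitting $x \mapsto (\ph_j(x))_j$, and hence $\sA_{(1)}$ is (isomorphic to) the range of the idempotent $e \in M_N(\sA_{(0)})$ with entries $e_{jk} := \eta_j \, \xi_k$. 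The identity $e^2 = e$ is immediate from $\sum_k \xi_k \eta_k = 1_\sA$. Symmetrically, for $\sA_{(-1)}$ I would use $\sum_{i = 1}^M \al_i \be_i = 1_\sA$ with $\al_i \in \sA_{(-1)}$ and $\be_i \in \sA_{(1)}$; here $\sA_{(1)} \cd \sA_{(-1)} \su \sA_{(0)}$, so $\psi_i : \sA_{(-1)} \to \sA_{(0)}$, $y \mapsto \be_i \, y$, is right $\sA_{(0)}$-linear, $\sum_i \al_i \psi_i(y) = y$ for all $y \in \sA_{(-1)}$, and $\sA_{(-1)}$ is the range of the idempotent $f \in M_M(\sA_{(0)})$ with entries $f_{ik} := \be_i \, \al_k$.

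There is no real obstacle here: the whole argument is a direct application of the dual basis criterion, and the only steps calling for (routine) verification are that the maps $\ph_j$ and $\psi_i$ genuinely take values in $\sA_{(0)}$ and are right $\sA_{(0)}$-linear, and that $e^2 = e$ and $f^2 = f$. The single item worth recording explicitly---as anticipated in the remark preceding the corollary---is the precise form of the idempotents, $e_{jk} = \eta_j \xi_k$ and $f_{ik} = \be_i \al_k$, since these will be reused in the computations further on.
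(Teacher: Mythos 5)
Your proof is correct and follows essentially the same route as the paper: the maps $x \mapsto (\ph_j(x))_j = (\eta_j x)_j$ and $(a_j)_j \mapsto \sum_j \xi_j a_j$ are exactly the paper's $\Phi_{(1)}$ and $\Psi_{(1)}$, and your idempotent $e_{jk} = \eta_j \xi_k$ is the paper's $E_{(1)} = \Phi_{(1)}\Psi_{(1)}$ written out entrywise (likewise for the $\sA_{(-1)}$ case). The only cosmetic difference is that you phrase the argument via the dual-basis criterion rather than by exhibiting the splitting maps directly, but the computations are identical.
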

\begin{proof}
With the $\xi$'s and the $\eta$'s as above, define the module homomorphisms
\[
\begin{split}
\Phi_{(1)}  & : \sA_{(1)}  \to (\sA_{(0)})^N \, , \q \Phi_{(1)}(\zeta) = \ma{c}{\eta_1 \, \zeta \\ \eta_2 \, \zeta \\ \vdots \\ \eta_N \, \zeta} 
\q \T{and} \\
\Psi_{(1)}  & : (\sA_{(0)})^N \to \sA_{(1)}  \, , \q \Psi_{(1)}\ma{c}{x_1 \\ x_2 \\ \vdots \\ x_N} = \xi_1 \, x_1 + \xi_2 \, x_2 + \cdots + 
\xi_N \, x_N  \, .
\end{split}
\]
It then follows that $\Psi_{(1)} \Phi_{(1)} = \T{id}_{\sA_{(1)} }$. 
Thus $E_{(1)} := \Phi_{(1)}\Psi_{(1)}$ is an idempotent in $M_N(\sA_{(0)})$ and this proves the first half of the corollary.

Similarly, 
with the $\alpha$'s and the $\beta$'s as above, define the module homomorphisms
\[
\begin{split}
\Phi_{(-1)} & : \sA_{(-1)}  \to \C O(W_q(k,l))^2 \, , \q \Phi_{(-1)}(\zeta) = \ma{c}{\beta_1 \, \zeta \\ \beta_2 \, \zeta \\ \vdots 
\\ \beta_M \, \zeta} \q \T{and} \\
\Psi_{(-1)}  & : \C O(W_q(k,l))^2 \to \sA_{(-1)}  \, , \q \Psi_{(-1)}\ma{c}{x_1 \\ x_2 \\ \vdots \\ x_M} = \alpha_1 \, x_1 + \alpha_2 \, x_2 
+ \cdots + \alpha_M \, x_M  \, .
\end{split}
\]
Now one gets $\Psi_{(-1)}\Phi_{(-1)}= \T{id}_{\sA_{(-1)} }$. 
Thus $E_{(-1)}:= \Phi_{(-1)}\Psi_{(-1)}$ is an idempotent in $M_M(\sA_{(0)})$ as well. 
This finishes the proof of the corollary.
\end{proof}

Let $d \in \nn$ and consider the $\zz$-graded unital $\cc$-algebra $\sA^{1/d} := \op_{n \in \zz} \sA_{(dn)}$. 

As a consequence of Theorem \ref{t:quapriseq} we obtain the following:

\begin{prop}\label{p:quaprifin}
Suppose $\big( \sA, \C O(U(1)), \sA_{(0)} \big)$ is a quantum principal $U(1)$-bundle. Then $\big( \sA^{1/d}, \C O(U(1)), \sA_{(0)} \big)$ is a quantum principal $U(1)$-bundle for all $d \in \nn$.
\end{prop}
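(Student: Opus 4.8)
The plan is to deduce Proposition~\ref{p:quaprifin} directly from the characterization given in Theorem~\ref{t:quapriseq}, applied twice: once to $\sA$ and once to $\sA^{1/d}$. Recall that under the grading of $\sA^{1/d}$ one has $(\sA^{1/d})_{(1)} = \sA_{(d)}$ and $(\sA^{1/d})_{(-1)} = \sA_{(-d)}$, while $(\sA^{1/d})_{(0)} = \sA_{(0)}$. So by Theorem~\ref{t:quapriseq}, it suffices to produce finite sequences $\{\wit\xi_j\}$, $\{\wit\be_i\}$ in $\sA_{(d)}$ and $\{\wit\eta_j\}$, $\{\wit\al_i\}$ in $\sA_{(-d)}$ with $\sum_j \wit\xi_j \wit\eta_j = 1_\sA = \sum_i \wit\al_i \wit\be_i$.

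First I would invoke Theorem~\ref{t:quapriseq} for the given bundle $\big(\sA, \C O(U(1)), \sA_{(0)}\big)$ to obtain sequences $\{\xi_j\}_{j=1}^N$, $\{\be_i\}_{i=1}^M$ in $\sA_{(1)}$ and $\{\eta_j\}_{j=1}^N$, $\{\al_i\}_{i=1}^M$ in $\sA_{(-1)}$ with $\sum_j \xi_j\eta_j = 1_\sA = \sum_i \al_i\be_i$. Then I would simply take $d$-fold products: set, for each multi-index $J = (j_1,\dots,j_d) \in \{1,\dots,N\}^d$, $\wit\xi_J := \xi_{j_1}\cdots\xi_{j_d} \in \sA_{(d)}$ and $\wit\eta_J := \eta_{j_d}\cdots\eta_{j_1} \in \sA_{(-d)}$, and similarly $\wit\al_I := \al_{i_1}\cdots\al_{i_d}$, $\wit\be_I := \be_{i_d}\cdots\be_{i_1}$ for $I \in \{1,\dots,M\}^d$. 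The grading is respected since a product of $d$ elements of degree $1$ has degree $d$ (and likewise for degree $-1$), using that $\sA$ is a $\zz$-graded algebra.

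The key computation is then that $\sum_{J} \wit\xi_J \wit\eta_J = 1_\sA$, which follows by telescoping:
\[
\sum_{J \in \{1,\dots,N\}^d} \xi_{j_1}\cdots\xi_{j_d}\,\eta_{j_d}\cdots\eta_{j_1}
= \sum_{j_1,\dots,j_{d-1}} \xi_{j_1}\cdots\xi_{j_{d-1}} \Big(\sum_{j_d} \xi_{j_d}\eta_{j_d}\Big)\eta_{j_{d-1}}\cdots\eta_{j_1},
\]
and the inner sum equals $1_\sA$, so one collapses the innermost index and repeats $d$ times until only $1_\sA$ remains. The identical argument gives $\sum_I \wit\al_I\wit\be_I = 1_\sA$. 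Applying the converse direction of Theorem~\ref{t:quapriseq} to $\big(\sA^{1/d}, \C O(U(1)), \sA_{(0)}\big)$ then yields that it is a quantum principal $U(1)$-bundle.

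There is no real obstacle here; the statement is essentially bookkeeping once Theorem~\ref{t:quapriseq} is in hand. The only point requiring a line of care is confirming the degree assignments of the iterated products and the compatibility of the construction with the grading of $\sA^{1/d}$ (i.e.\ that $\sA_{(d)}$ and $\sA_{(-d)}$ are indeed the $\pm 1$ spectral subspaces of the rescaled grading), together with the purely formal telescoping identity. One could alternatively derive this from the ``strongly graded'' viewpoint noted in the remark following Theorem~\ref{t:quapriseq}, since a rescaling $\sA^{1/d}$ of a strongly $\zz$-graded algebra is again strongly $\zz$-graded; but the direct argument via the explicit generators is shorter and self-contained.
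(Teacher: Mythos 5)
Your proposal is correct and matches the paper's proof essentially verbatim: the same $d$-fold products indexed by multi-indices, with the same ordering reversal on the $\eta$'s and $\beta$'s, and the same telescoping identity reducing to $1_{\sA}$, followed by the converse direction of Theorem~\ref{t:quapriseq}. The only difference is that you spell out the telescoping explicitly where the paper says ``It is then clear that.''
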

\begin{proof}
Let the finite sequences $\{ \xi_j\}_{j = 1}^N$, $\{\be_i\}_{i = 1}^M$ in $\sA_{(1)}$ and $\{\eta_j\}_{j = 1}^N$, $\{\al_i\}_{i = 1}^M$ in $\sA_{(-1)}$ be as in Theorem \ref{t:quapriseq}. 
For each multi-index $J \in \{1,\ldots,N\}^d$ and each multi-index $I \in \{1,\ldots,M\}^d$ define the elements
\[
\begin{split}
& \xi_J := \xi_{j_1} \clc \xi_{j_d} \, , \q  \be_I := \be_{i_d} \clc \be_{i_1} \in \sA_{(d)} \q \M{and} \\
& \eta_J := \eta_{j_d} \clc \eta_{j_1} \, , \q \al_I := \al_{i_1} \clc \al_{i_d} \in \sA_{(-d)} \, .
\end{split}
\]
It is then clear that
\[
\sum_{J \in \{1,\ldots,N\}^d } \xi_J \, \eta_J = 1_{\sA^{1/d}} =
\sum_{I \in \{1,\ldots,M\}^d } \al_I \, \be_I \ .
\]
This proves the proposition by an application of Theorem \ref{t:quapriseq}.
\end{proof}

Remark that it follows from Proposition \ref{p:quaprifin} and Corollary \ref{co:mnfpm} that when $\big( \sA, \C O(U(1)), \sA_{(0)} \big)$ is a quantum principal bundle then the right modules $\sA_{(d)}$ and $\sA_{(-d)}$ are finitely generated projective over $\sA_{(0)}$ for all $d \in \nn$.

\section{Quantum weighted projective lines}
We recall the definition of the quantum weighted projective lines as fixed point algebras of circle actions on the quantum $3$-sphere. These algebras play the role of the coordinate functions on the base space which parametrizes the lines generating the quantum lens spaces (as total spaces). Corresponding $C^*$-algebras will be the analogues of continuous functions on the base and total space respectively. The latter $C^*$-algebra will be given as a Pimsner algebra coming from the line bundles.

\subsection{Coordinate algebras}\label{ss:cooalgwei}
Let $n \in \nn_0$ and let $q \in (0,1)$.

\begin{dfn}
The coordinate algebra $\C O(S_q^{2n+ 1})$ of the \emph{quantum sphere} $S_q^{2n+1}$ is the universal unital $*$-algebra with generators $z_0,\ldots,z_n$ and relations
\[
\begin{split}
& z_i z_j = q z_j z_i \, \, \T{ for } \, i < j \, \, , \q z_i z_j^* = q z_j^* z_i \, \, \T{ for } \, i \neq j  \, , \\
& z_i z_i^* = z_i^* z_i + (q^{-2} - 1) \sum_{m = i + 1}^n z_m z_m^* \, \, , \q \sum_{m = 0}^n z_m z_m^* = 1 \, .
\end{split}
\]
\end{dfn}
\noindent 
This algebra was introduced in \cite{VS91}. 
Next, let $L = (l_0,\ldots,l_n) \in \nn^{n + 1}$ be fixed. We then have a circle action $\{ \si_w^L\}_{w \in S^1}$ on $\C O(S_q^{2n+1})$ defined on generators by
\[
\si_w^L : z_i \mapsto w^{l_i} z_i \q \T{for all } \ i \in \{0,\ldots,n\} \, .
\]

\begin{dfn}
The coordinate algebra $\C O(W_q(L))$ of the \emph{quantum weighted projective space} $W_q(L)$ is the fixed point algebra of the circle action $\{\si_w^L\}_{w \in S^1}$. Thus
\[
\C O(W_q(L)) := \big\{ x \in \C O(S_q^{2n+ 1}) \mid \si_w^L(x) = x \M{ for all } w \in S^1 \big\} \, .
\]
\end{dfn}

From now on, we will suppose that $n = 1$ and that $k := l_0$ and $l := l_1$ are coprime. By \cite[Thm.~2.1]{BrFa:QT}, the algebraic quantum projective line $\C O(W_q(k,l))$ agrees with the unital $*$-subalgebra of $\C O(S_q^3)$ generated by the elements $z_0^l (z_1^*)^k$ and $z_1 z_1^*$. Alternatively, one may identify $\C O(W_q(k,l))$ with the universal unital $*$-algebra with generators $a,b$, subject to the relations
\begin{align*}
& b^* = b \, \, , \q b a = q^{-2l} \ ab \,\, , \\
& a a^* = q^{2kl} \ b^k \prod_{m = 0}^{l - 1} (1 - q^{2m} b) \,\, , \q a^* a = b^k \prod_{m = 1}^l (1 - q^{-2m} b) \, . 
\end{align*}
The identification is just $a \mapsto z_0^l (z_1^*)^k$ and $b \mapsto z_1 z_1^*$ (we have exchanged the names of generators with respect to \cite{BrFa:QT}). In particular $\C O(W_q(1,1))=\C O(\mathbb{C}P_q^1)$, while $\C O(W_q(1,l))$ was named 
\emph{quantum teardrop} in  \cite{BrFa:QT}.

\subsection{$C^*$-completions}\label{ss:teacom}
We fix $k,l \in \nn$ to be coprime positive integers. 

\begin{dfn}\label{de:cqwps}
The algebra of continuous functions on the \emph{quantum weighted projective line} $W_q(k,l)$ is the universal enveloping $C^*$-algebra, denoted 
$C(W_q(k,l))$, of the coordinate algebra $\C O(W_q(k,l))$. 
\end{dfn}

Let $\sK$ denote the $C^*$-algebra of compact operators on the separable Hilbert space $l^2(\nn_0)$ of all square summable sequences indexed by $\nn_0$, with orthonormal basis $\{ e_p \}_{p \in \nn_0} $. It was shown in \cite[Prop.~5.1]{BrFa:QT} that $C(W_q(k,l))$ is isomorphic to the unital $C^*$-algebra
\[
\wit{\op_{s = 1}^l \sK} \su \sL\big( \op_{s = 1}^l l^2(\nn_0) \big) \, , 
\]
where $~\wit{\cd}~$ denotes the unitalization functor. The isomorphism is induced by the direct sum of representations $\op_{s = 1}^l \pi_s : \C O(W_q(k,l)) \to \sL\big(\op_{s = 1}^l l^2(\nn_0) \big)$ where each $\pi_s$ is defined on generators by
\begin{equation}\label{eq:defreptea}
\begin{split}
& \pi_s(z_1 z_1^*)(e_p) := q^{2s} \ q^{2 lp} \ e_p \, \, \, , \q \pi_s(z_0^l (z_1^*)^k) (e_0) := 0 \, , \\
& \pi_s(z_0^l (z_1^*)^k)(e_p) := q^{k (lp + s)} \ \prod_{m = 1}^l (1 - q^{2(lp + s -m)})^{1/2} \ e_{p - 1} \, \, \, , \, \, p \geq 1  \, .
\end{split}
\end{equation}

Notice that the $C^*$-algebra $C(W_q(k,l))$ does not depend on $k$. 
As a consequence one has the following corollary due to Brzezi\'nski and Fairfax, see \cite[Cor.~5.3]{BrFa:QT}.
\begin{cor}\label{c:ktwps}
The $K$-groups of $C(W_q(k,l))$ are:
\[
K_0(C(W_q(k,l))) = \zz^{l+1} \, , \q K_1(C(W_q(k,l))) = 0 \, .
\]
\end{cor}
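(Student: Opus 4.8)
The plan is to exploit the structural description of $C(W_q(k,l))$ recalled just above, namely the isomorphism $C(W_q(k,l)) \simeq \widetilde{\bigoplus_{s=1}^l \sK}$ with the unitalization of a direct sum of $l$ copies of the compacts. Since the $K$-theory of $\sK$ is well known, $K_0(\sK) = \zz$ and $K_1(\sK) = 0$, and since $K$-theory is additive for finite direct sums, one has $K_0(\bigoplus_{s=1}^l \sK) = \zz^l$ and $K_1(\bigoplus_{s=1}^l \sK) = 0$. The only remaining point is to track the effect of adjoining a unit.

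First I would recall that for any non-unital $C^*$-algebra $J$, the unitalization $\widetilde{J}$ sits in a split short exact sequence $0 \to J \to \widetilde{J} \to \cc \to 0$, the splitting being the unital inclusion $\cc \to \widetilde{J}$. Applying the six term exact sequence in $K$-theory to this extension, and using that the sequence splits so that all connecting maps vanish and each $K_i(\widetilde{J})$ decomposes as $K_i(J) \oplus K_i(\cc)$, I get $K_0(\widetilde{J}) = K_0(J) \oplus \zz$ and $K_1(\widetilde{J}) = K_1(J)$. Specializing to $J = \bigoplus_{s=1}^l \sK$ gives $K_0(C(W_q(k,l))) = \zz^l \oplus \zz = \zz^{l+1}$ and $K_1(C(W_q(k,l))) = 0$, which is the claim.

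Alternatively, and perhaps more in the spirit of the paper, one could avoid the explicit representation-theoretic description and instead read the corollary off the Gysin-type six term exact sequence of Remark~\ref{rem:gysseq} applied to a suitable Pimsner presentation of a lens space over $W_q(k,l)$; but this is circular at the present stage since those computations occupy the later sections, so the direct argument via $\widetilde{\bigoplus_{s=1}^l \sK}$ is cleaner here. The main obstacle, such as it is, is purely bookkeeping: one must be careful that the isomorphism of \cite[Prop.~5.1]{BrFa:QT} is as $C^*$-algebras (so that it induces an isomorphism on $K$-theory) and that the additivity of $K$-theory and the splitting of the unitalization sequence are invoked correctly. No genuine analytic or homological difficulty arises, so the proof is essentially a citation of \cite[Prop.~5.1]{BrFa:QT} together with standard $K$-theory facts.
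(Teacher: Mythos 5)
Your proof is correct and is essentially the argument the paper has in mind: the paper states the corollary as an immediate consequence of the isomorphism $C(W_q(k,l)) \simeq \widetilde{\bigoplus_{s=1}^l \sK}$ from \cite[Prop.~5.1]{BrFa:QT} and simply cites \cite[Cor.~5.3]{BrFa:QT} for the $K$-groups, while you spell out the standard $K$-theory bookkeeping (additivity over finite direct sums, $K_*(\sK)$, and the split unitalization sequence) that the citation encapsulates.
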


Notice that the $K$-theory groups of the quantum weighted projective lines do not agree with the $K$-theory groups of their commutative counterparts: In the commutative case, the $K_0$-group is given by $K_0(C(W(k,l))) = \zz^{2}$ independently of both weights $k$ and $l$, see \cite[Prop.~2.5]{AA94}.

\begin{dfn}
The algebra of continuous functions on the \emph{quantum $3$-sphere} $S_q^3$ is the universal enveloping $C^*$-algebra,   
$C(S_q^3)$, of the coordinate algebra $\C O(S_q^3)$.  
\end{dfn}

The (weighted) circle action $\big\{ \si_w^{(k,l)} \big\}_{w \in S^1}$ on $\C O(S_q^3)$ will be denoted  simply by $\{ \si_w \}_{w \in S^1}$. 
It induces a strongly continuous circle action on $C(S_q^3)$. We let $C(S_q^3)_{(0)}$ denote the fixed point algebra of this action.

\begin{lemma}\label{l:norequtea}
The inclusion $\C O(W_q(k,l)) \su \C O(S_q^3)$ induces an isomorphism of unital $C^*$-algebras,
\[
i : C(W_q(k,l)) \to C(S_q^3)_{(0)} \, .
\]
\end{lemma}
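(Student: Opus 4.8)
The strategy is to invoke the universal property of the enveloping $C^*$-algebra together with the gauge-invariant uniqueness result already quoted above (Theorem~\ref{t:norequpim}), applied to the $\zz$-graded $*$-algebra $\C O(S_q^3)$ with its $(k,l)$-grading. First I would observe that $\C O(W_q(k,l)) = \C O(S_q^3)_{(0)}$ is precisely the degree-zero part of this grading, so the inclusion $\C O(W_q(k,l)) \su \C O(S_q^3)$ is the inclusion of the fixed-point subalgebra. The representation $\op_{s=1}^l \pi_s$ of $\C O(S_q^3)$ (which exists by universality of $C(S_q^3)$, or can be built by hand on $\op_{s=1}^l \ell^2(\nn_0)\ot \ell^2(\zz)$ with the circle acting by the bilateral shift in the last factor) is faithful, and its restriction to $\C O(W_q(k,l))$ is, by \cite[Prop.~5.1]{BrFa:QT}, exactly the faithful representation realizing $C(W_q(k,l))$. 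Hence the $C^*$-norm on $\C O(W_q(k,l))$ coming from its own universal completion agrees with the norm it inherits as a subalgebra of $C(S_q^3)$; this is the key coincidence-of-norms input.

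Second, I would set up the comparison of norms on all of $\C O(S_q^3)$. Let $\|\cdot\|$ be the universal $C^*$-norm on $\C O(S_q^3)$ (giving $C(S_q^3)$) and let $\|\cdot\|'$ be any other $C^*$-norm on $\C O(S_q^3)$ for which the circle action $\{\si_w\}$ is isometric; in particular one could take the norm induced by a faithful covariant representation, but the cleanest choice is to compare $\|\cdot\|$ with the norm pulled back along $\op_s \pi_s$. By the previous paragraph these two norms agree on $\C O(S_q^3)_{(0)} = \C O(W_q(k,l))$. I would then check that Assumption~\ref{a:semisat} holds for the circle action on $C(S_q^3)$: the fixed-point algebra is separable, and the first spectral subspaces $\C O(S_q^3)_{(\pm 1)}$ are finitely generated projective over $\C O(S_q^3)_{(0)}$ because $\big(\C O(S_q^3), \C O(U(1)), \C O(W_q(k,l))\big)$ is a quantum principal $U(1)$-bundle (this is the content of the lens-space constructions in the following sections, and for the ordinary, i.e.\ unweighted, Hopf fibration it is classical; for general coprime $(k,l)$ it follows from Theorem~\ref{t:quapriseq} via explicit generators). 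Fullness and countable generation of the closures $A_{(\pm 1)}$ over $A_{(0)}$ then follow.

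Third, with these hypotheses in place, Theorem~\ref{t:norequpim} applies verbatim: since $\|x\| = \|x\|'$ for all $x \in \C O(S_q^3)_{(0)}$ and both norms satisfy the semisaturation/Assumption~\ref{a:semisat} conditions on the respective completions, the identity map $\C O(S_q^3) \to \C O(S_q^3)$ extends to an isomorphism $C(S_q^3) \to (\text{the other completion})$, and in particular $\|x\| = \|x\|'$ for \emph{all} $x \in \C O(S_q^3)$. Taking $\|\cdot\|'$ to be the norm from $\op_s \pi_s$ shows that this representation is faithful on $C(S_q^3)$. Now the fixed-point algebra $C(S_q^3)_{(0)}$ is, by Lemma~\ref{l:denspe}, the closure of $\C O(S_q^3)_{(0)} = \C O(W_q(k,l))$ inside $C(S_q^3)$; since the subspace norm on $\C O(W_q(k,l))$ coincides with its universal norm, this closure is canonically isomorphic to $C(W_q(k,l))$, and the isomorphism is induced by the inclusion $\C O(W_q(k,l)) \su \C O(S_q^3)$, as claimed.

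\textbf{Main obstacle.} The one genuinely delicate point is justifying that the $C^*$-norm on $\C O(W_q(k,l))$ inherited from $C(S_q^3)$ equals its intrinsic universal norm — equivalently, that $\op_s \pi_s$ does not lose information when passed from the subalgebra to the whole $3$-sphere algebra. This is where the quantum-principal-bundle structure (finite projectivity of the spectral subspaces) is essential, since it is exactly what lets Theorem~\ref{t:norequpim} propagate the equality of norms from degree zero to all degrees; without semisaturation the argument collapses. Everything else is bookkeeping: identifying graded pieces, citing \cite[Prop.~5.1]{BrFa:QT} for the structure of $C(W_q(k,l))$, and invoking the already-established machinery of Section~3.
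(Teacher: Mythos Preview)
Your first paragraph already contains the paper's entire argument, and the rest is unnecessary and in one place unjustified. Concretely: once you have a $*$-representation $\rho$ of $\C O(S_q^3)$ whose restriction to $\C O(W_q(k,l))$ is (unitarily equivalent to) $\op_{s=1}^l \pi_s$, the inequality $\|x\|_W = \|\pi(x)\| = \|\rho(i(x))\| \leq \|i(x)\|_S$ for $x \in \C O(W_q(k,l))$ follows immediately, and together with the automatic reverse inequality (universality of $C(W_q(k,l))$) you get that $i$ is isometric. Density of the image in $C(S_q^3)_{(0)}$ is Lemma~\ref{l:denspe}. That is precisely the paper's proof, which cites \cite[Prop.~2.4]{BrFa:QT} for the existence of $\rho$. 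You do \emph{not} need $\rho$ to be faithful on $\C O(S_q^3)$ or on $C(S_q^3)$ for this step; only that it is a $*$-representation and restricts to $\pi$.

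The detour through Theorem~\ref{t:norequpim} is where things go wrong. To apply it you assert that $\big(\C O(S_q^3), \C O(U(1)), \C O(W_q(k,l))\big)$, with the $(k,l)$-weighted coaction, is a quantum principal $U(1)$-bundle. The paper never proves this, and one should be suspicious: classically the $(k,l)$-weighted circle action on $S^3$ has non-trivial stabilizers $\zz/k\zz$ and $\zz/l\zz$ at the poles whenever $k>1$ or $l>1$, so it is not free and not a principal bundle. What the later sections establish is that the \emph{lens spaces} $\C O(L_q(dlk;k,l))$ --- whose grading uses only the $kl\,\zz$-spectral subspaces --- are principal over $\C O(W_q(k,l))$; this says nothing about the degree $\pm 1$ spectral subspaces of $\C O(S_q^3)$ for the full $(k,l)$-grading. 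So the semisaturation hypothesis you need for Theorem~\ref{t:norequpim} is unverified, and your ``Main obstacle'' paragraph has the logic inverted: the norm coincidence on degree zero is the easy direct step, not something to be deduced from the global machinery.

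In short: delete everything after your first paragraph except the concluding sentence invoking Lemma~\ref{l:denspe}, and you have the paper's proof.
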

\begin{proof} 
Clearly, one has $\T{Im}(i) \su C(S_q^3)_{(0)}$ 
and $\T{Im}(i)$ is dense by the argument used in the proof of Lemma \ref{l:denspe}.

It therefore suffices to show that $i : C(W_q(k,l)) \to C(S_q^3)$ is injective. To this end, consider the $*$-homomorphism $\pi := \op_{s = 1}^l \pi_s : \C O(W_q(k,l)) \to \sL\big( \op_{s = 1}^l l^2(\nn_0) \big)$. Then, by \cite[Prop.~2.4]{BrFa:QT} there exist a $*$-homomorphism $\rho : \C O(S_q^3) \to \sL(l^2(\nn_0))$ and an isomorphism $\phi : \sL\big( \op_{s = 1}^l l^2(\nn_0) \big) \to \sL(l^2(\nn_0))$ such that 
\[
\phi \ci \pi = \rho \ci i : \C O(W_q(k,l)) \to \sL(l^2(\nn_0)) \, .
\]
Let now $x \in \C O(W_q(k,l))$. It follows from the above, that
\[
\|x\| = \| \pi(x) \| = \| (\phi \ci \pi)(x) \| = \| (\rho \ci i)(x) \| \leq \| i(x) \| \, .
\]
This proves that $i : C(W_q(k,l)) \to C(S_q^3)_{(0)}$ is an isometry and it is therefore injective.
\end{proof}

Let $\sL^1$ denotes the trace class operators on the Hilbert space $l^2(\nn_0)$.

\begin{lemma}\label{l:factraide}
The $*$-homomorphism $\pi := \op_{s = 1}^l \pi_s : \C O(W_q(k,l)) \to \wit{\op_{s = 1}^l \sK}$ factorizes through the unital $*$-subalgebra $\wit{\op_{s = 1}^l \sL^1}$.
\end{lemma}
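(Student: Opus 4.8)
The statement asserts that $\pi = \op_{s=1}^l \pi_s$ maps $\C O(W_q(k,l))$ into $\wit{\op_{s=1}^l \sL^1}$, i.e.\ that each generator is sent to a trace-class perturbation of a scalar. Since $\pi$ is a $*$-homomorphism and $\wit{\op_{s=1}^l \sL^1}$ is a unital $*$-subalgebra of $\wit{\op_{s=1}^l \sK}$, it suffices to check this on the two algebra generators $a = z_0^l (z_1^*)^k$ and $b = z_1 z_1^*$ of $\C O(W_q(k,l))$. The natural approach is a direct estimate using the explicit formulas \eqref{eq:defreptea}.

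\textbf{Key steps.} First I would treat $\pi_s(b) = \pi_s(z_1 z_1^*)$. From \eqref{eq:defreptea}, $\pi_s(b)$ is the diagonal operator $e_p \mapsto q^{2s} q^{2lp} e_p$; since $q \in (0,1)$ and $l \geq 1$, the eigenvalues $q^{2s} q^{2lp}$ are summable over $p \in \nn_0$ (a geometric series with ratio $q^{2l} < 1$), so $\pi_s(b) \in \sL^1$, hence $\pi_s(b) \in \wit{\sL^1}$ (in fact already in $\sL^1$). Second, for $\pi_s(a) = \pi_s(z_0^l(z_1^*)^k)$, the formula shows it is the weighted shift $e_p \mapsto c_p e_{p-1}$ (and $e_0 \mapsto 0$) with $c_p = q^{k(lp+s)} \prod_{m=1}^l (1 - q^{2(lp+s-m)})^{1/2}$; the product factor is bounded (by $1$ for $p$ large, and in any case finitely many factors each bounded), so $|c_p| \leq C\, q^{klp}$ for a constant $C$, and again $\sum_p |c_p| < \infty$ because $q^{kl} < 1$. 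A weighted unilateral shift with absolutely summable weights is trace class (e.g.\ its singular values are a rearrangement of the $|c_p|$), so $\pi_s(a) \in \sL^1$. Taking direct sums over $s = 1,\ldots,l$ (a finite sum) preserves trace class, so $\pi(a), \pi(b) \in \op_{s=1}^l \sL^1 \su \wit{\op_{s=1}^l \sL^1}$. Finally, since the unital $*$-algebra they generate lies in the unital $*$-subalgebra $\wit{\op_{s=1}^l \sL^1}$ and $\C O(W_q(k,l))$ is generated by $a, b$, the homomorphism $\pi$ factorizes as claimed.

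\textbf{Main obstacle.} There is no serious obstacle here; the content is entirely the summability estimate coming from $q < 1$, together with the elementary fact that a weighted shift with $\ell^1$ weights is trace class. The only point requiring a line of care is recording that $\wit{\op_{s=1}^l \sL^1}$ is closed under the algebra operations (products and adjoints) so that it genuinely contains the $*$-subalgebra generated by the images of the generators — but this is immediate since $\sL^1$ is a two-sided $*$-ideal in $\sL(l^2(\nn_0))$ and adding scalars preserves this.
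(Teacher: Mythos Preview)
Your proposal is correct and follows essentially the same approach as the paper: reduce to the two generators $a = z_0^l(z_1^*)^k$ and $b = z_1 z_1^*$, and show that each $\pi_s$ sends them into $\sL^1$ via the explicit formulas and the geometric decay coming from $q \in (0,1)$. The only cosmetic difference is in the treatment of $\pi_s(a)$: the paper uses the relation $a^*a = b^k \prod_{m=1}^l (1 - q^{-2m} b)$ to write $|\pi_s(a)| = \pi_s(b)^{k/2}\cdot(\text{bounded})$ and invokes the ideal property of $\sL^1$, whereas you estimate the singular values of the weighted shift directly --- both arguments are equally short and valid.
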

\begin{proof}
Let $s \in \{1,\ldots,l\}$. We only need to show that $\pi_s( z_0^l (z_1^*)^k ), \pi_s(z_1 z_1^*) \in \sL^1$. 

With notation $a := z_0^l (z_1^*)^k$ and $b := z_1 z_1^*$, the operator $\pi_s(b) : l^2(\nn_0) \to l^2(\nn_0)$ is positive and diagonal with eigenvalues $\{ q^{2s} \ q^{2lp} \}_{p = 0}^\infty$ each of multiplicity $1$.

It is immediate to show that $\pi_s(b)^{1/2} \in \sL^1$. Indeed, from 
\eqref{eq:defreptea}, 
\[
\T{Tr}(\pi_s(b)^{1/2}) = \sum_{p = 0}^\infty \ q^s \ q^{lp} = q^s \ (1 - q^l)^{-1} < \infty \, ,
\]
having restricted the deformation parameter to $q \in (0,1)$.
From $\pi_s(b)^{1/2} \in \sL^1$ the inclusion $\pi_s(b) \in \sL^1$ follows as well.

To obtain that $\pi_s(a) \in \sL^1$ we need to verify that $|\pi_s(a)| \in \sL^1$. Now, recall  that 
\[
a^* a = b^k \cd \prod_{m = 1}^l (1 - q^{-2m} b) \, .
\]
Using this relation, we may compute the absolute value: 
\[
|\pi_s(a)| = \pi_s(b)^{k/2} \cd \Big( \prod_{m = 1}^l (1 - q^{-2m} \pi_s(b)) \Big)^{1/2} \, .
\]
Since $\sL^1$ is an ideal in $\sL(l^2(\nn_0))$ we may thus conclude that $|\pi_s(a)| \in \sL^1$.
\end{proof}

\section{Quantum lens spaces}\label{se:qls}
We define $3$-dimensional quantum lens spaces $\C O\big( L_q(dlk;k,l) \big)$ as fixed point algebras for the action of a finite cyclic group on the coordinate algebra of the quantum $3$-sphere. We show that these spaces are quantum principal bundles over quantum weighted projective spaces. Our examples are more general than those of \cite{BrFa:QT}. 
As said the enveloping $C^*$-algebras of the lens spaces will be given as Pimsner algebras.

\subsection{Coordinate algebras}\label{ss:cooalg}
Let $k,l \in \nn$ be coprime positive integers. For each $d \in \nn$ define the action of the cyclic group $\zz/ (dlk) \zz$ 
on the quantum sphere $S_q^3$, 
\[
\al^{1/d} : \zz/ (dlk) \zz \ti \C O(S_q^3)  \to \C O(S_q^3) \, ,
\]
by letting on generators:
\begin{equation}\label{actzz}
\al^{1/d}(1,z_0) := \exp ( \frac{2 \pi \ii}{ dl } ) \ z_0 \q \T{and} \q \al^{1/d}(1,z_1) := \exp( \frac{2 \pi \ii}{ dk} ) \ z_1 \, .
\end{equation}
\begin{dfn}
The coordinate algebra for the \emph{quantum lens space} $L_q(dlk; k,l)$ is the fixed point algebra of the action $\al^{1/d}$. This unital $*$-algebra is denoted by $\C O\big( L_q(dlk;k,l) \big)$. Thus
\[
\C O\big( L_q(dlk;k,l) \big) := \big\{ x \in \C O(S_q^3) \mid \al^{1/d}(1,x) = x \big\} \, .
\]  
\end{dfn}
The elements $z_0^l (z_1^*)^k$ and $z_1 z_1^*$, generating the weighted projective space algebra $\C O(W_q(k,l))$, 
are clearly invariant leading, for any $d\in \nn$, to an algebra inclusion
\[
\C O(W_q(k,l)) \hookrightarrow \C O\big( L_q(dlk;k,l) \big) \, . 
\]
Next, for each $n \in \nn_0$, consider the subspaces of $\C O(S_q^3)$ given by
\begin{equation}\label{n-subsp}
\begin{split}
\sA_{(n)}(k,l) & := \sum_{j = 0}^n \ (z_0^*)^{l j} (z_1^*)^{k (n -j)} \cd \C O(W_q(k,l)) \, , \\
\sA_{(-n)}(k,l) & := \sum_{j = 0}^n \ (z_0)^{l j} (z_1)^{k (n -j)} \cd \C O(W_q(k,l)) \,.
\end{split}
\end{equation}
By construction these subspaces are in fact right-modules over $\C O(W_q(k,l))$. 

Recall that the algebra $\C O(S_q^3)$ admits \cite{Wor88} a vector space basis given by the vectors 
$\{ e_{p,r,s} \mid p \in \zz, \, r, s \in \nn_0 \}$, where
\[
e_{p,r,s} = 
\begin{cases}
z_0^p z_1^r (z_1^*)^s & \T{for}  \,\,\,\,\, p \geq 0 \\
(z_0^*)^{-p} z_1^r (z_1^*)^s & \T{for}  \,\,\,\,\, p \leq 0
\end{cases} \,\, .
\]
\begin{lemma}\label{l:chaspesub}
Let $n \in \zz$. It holds that
\[
\begin{split}
e_{p,r,s} \in \sA_{(n)}(k,l) & \lrar p k + (r -s) l = -nkl \\ 
& \lrar \si_w^{k,l}(e_{p,r,s}) = w^{-nkl} e_{p,r,s} \, , \, \, \forall \, w \in S^1 \, .
\end{split}
\]
As a consequence, it holds that
\[
x \in \sA_{(n)}(k,l) \lrar \si_w^{k,l}(x) = w^{-nkl} x \, , \, \, \forall \, w \in S^1 \, .
\]
\end{lemma}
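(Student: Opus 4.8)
The plan is to exploit the fact that the circle action $\si_w^{k,l}$ is diagonal in the monomial basis $\{e_{p,r,s}\}$ of $\C O(S_q^3)$, so that both the "membership" condition and the "spectral" condition decompose over this basis, and the whole statement reduces to a bookkeeping identity among the exponents. First I would compute, for $e_{p,r,s}$ with $p \geq 0$, the weight of $\si_w^{k,l}$: since $\si_w^{k,l}(z_0) = w^k z_0$ and $\si_w^{k,l}(z_1) = w^l z_1$ (hence $\si_w^{k,l}(z_1^*) = w^{-l} z_1^*$), one gets $\si_w^{k,l}(e_{p,r,s}) = w^{pk + (r-s)l} e_{p,r,s}$; the case $p \leq 0$ is identical since $z_0^* $ carries weight $w^{-k}$. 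This immediately gives the second $\Leftrightarrow$ in the displayed chain: $\si_w^{k,l}(e_{p,r,s}) = w^{-nkl} e_{p,r,s}$ for all $w \in S^1$ if and only if $pk + (r-s)l = -nkl$.

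Next I would establish the first $\Leftrightarrow$, namely $e_{p,r,s} \in \sA_{(n)}(k,l) \Leftrightarrow pk + (r-s)l = -nkl$. For the $\Rightarrow$ direction, observe that every generator of $\C O(W_q(k,l))$ — namely $z_0^l(z_1^*)^k$ and $z_1 z_1^*$, together with their adjoints — lies in the kernel of the weight, i.e. is fixed by $\si_w^{k,l}$; and the prefactors $(z_0^*)^{lj}(z_1^*)^{k(n-j)}$ (for $n \geq 0$) carry weight $w^{-ljk - k(n-j)l} = w^{-nkl}$, independent of $j$. Hence every spanning element of $\sA_{(n)}(k,l)$ is a $\si_w^{k,l}$-eigenvector of weight $w^{-nkl}$; expanding such an element in the basis $\{e_{p,r,s}\}$ and using that the basis elements have distinct weights when the exponent tuples differ, only those $e_{p,r,s}$ with $pk+(r-s)l = -nkl$ can appear. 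The case $n < 0$ is symmetric, using the prefactors $(z_0)^{lj}(z_1)^{k(n-j)}$ from the second line of \eqref{n-subsp}. For the converse $\Leftarrow$, I would argue directly: given $e_{p,r,s}$ with $pk + (r-s)l = -nkl$, I must write it as a product of a prefactor $(z_0^*)^{lj}(z_1^*)^{k(n-j)}$ (say $n\ge 0$) with an element of $\C O(W_q(k,l))$; concretely, using the commutation relations of $\C O(S_q^3)$ to reorder, and using that $z_0^l(z_1^*)^k$, $z_1 z_1^*$ and their adjoints generate $\C O(W_q(k,l))$, one checks that $e_{p,r,s}$ with the right total weight can indeed be massaged into such a form (this is essentially the content of the basis description of $\sA_{(n)}(k,l)$, and is where coprimality of $k$ and $l$ enters to guarantee the exponents split correctly).

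Finally, the last assertion — that $x \in \sA_{(n)}(k,l) \Leftrightarrow \si_w^{k,l}(x) = w^{-nkl} x$ for all $w$ — follows from the per-basis-element statement by linearity: an arbitrary $x \in \C O(S_q^3)$ is a finite linear combination $x = \sum c_{p,r,s}\, e_{p,r,s}$, and $\si_w^{k,l}(x) = w^{-nkl} x$ holds for all $w \in S^1$ iff $c_{p,r,s} = 0$ whenever the weight of $e_{p,r,s}$ differs from $w^{-nkl}$, i.e. iff every basis element occurring in $x$ satisfies $pk + (r-s)l = -nkl$, i.e. (by the first equivalence applied term by term) iff $x \in \sA_{(n)}(k,l)$. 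I expect the main obstacle to be the $\Leftarrow$ direction of the first equivalence: showing that the numerical condition $pk + (r-s)l = -nkl$ is not merely necessary but sufficient for $e_{p,r,s}$ to lie in the right-module $\sA_{(n)}(k,l)$ requires genuinely using the algebra structure — the $q$-commutation relations and the explicit generators of $\C O(W_q(k,l))$ — rather than just the grading, and this is where one has to be careful about reordering monomials and about the role of coprimality.
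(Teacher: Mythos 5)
Your overall strategy is the same as the paper's: compute the weight of $e_{p,r,s}$ under $\si_w^{k,l}$ to get the second equivalence (and the ``$\Rightarrow$'' of the first), then reduce the general statement to the per-basis-element one by linearity and the linear independence of the $e_{p,r,s}$. All of that is correct, including the observation that the generators $z_0^l(z_1^*)^k$, $z_1z_1^*$ of $\C O(W_q(k,l))$ are weight zero while the prefactors all have weight $-nkl$.

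The one place where you only gesture at the argument is precisely the direction you flag as the obstacle, $pk+(r-s)l=-nkl \Rightarrow e_{p,r,s}\in\sA_{(n)}(k,l)$, and here it is worth being concrete about what ``the exponents split correctly'' means, since that is the whole content of the step. Because $\gcd(k,l)=1$, the relation $pk+(r-s)l=-nkl$ forces $l\mid p$ and $k\mid(r-s)$, so one writes $p=d_0l$ and $r-s=d_1k$ with $d_0+d_1=-n$; this is the arithmetic input. The paper then treats the four sign cases for $(p,r-s)$. When $p,r-s\ge0$ one just rewrites $e_{p,r,s}=z_0^{ld_0}z_1^{kd_1}(z_1z_1^*)^s$, which is already a prefactor of $\sA_{(-d_0-d_1)}(k,l)$ times an element of $\C O(W_q(k,l))$ --- no commutation relations are needed, only $z_1z_1^*=z_1^*z_1$. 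When $p\ge0$, $r-s\le0$ one gets $e_{p,r,s}=z_0^{ld_0}(z_1^*)^{-kd_1}(z_1z_1^*)^r$ and there is a further dichotomy on the sign of $n=-(d_0+d_1)$: for $n\le0$ one peels off $z_0^{l(d_0+d_1)}$ as prefactor and groups the remaining $z_0^{-ld_1}(z_1^*)^{-kd_1}$ (up to a $q$-power) as $(z_0^l(z_1^*)^k)^{-d_1}\in\C O(W_q(k,l))$; for $n\ge0$ one groups $z_0^{ld_0}(z_1^*)^{kd_0}$ into $\C O(W_q(k,l))$ and is left with the prefactor $(z_1^*)^{nk}$, which must then be moved to the left using $z_1^*z_0=q^{-1}z_0z_1^*$. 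The remaining two sign cases are symmetric. So your plan works, and the reordering and the role of coprimality are exactly where you predicted, but the case analysis with the explicit decomposition $p=d_0l$, $r-s=d_1k$ is what has to be supplied to turn the sketch into a proof.
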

\begin{proof}
Clearly one has that
\[
\begin{split}
e_{p,r,s} \in \sA_{(n)}(k,l) & \rar p k + (r -s) l = - nkl  \\
& \lrar \si_w^{k,l}(e_{p,r,s}) = w^{-nkl} e_{p,r,s} \, , \, \, \forall \, w \in S^1 \, .
\end{split}
\]

Thus, it only remains to prove the implication
\[
p k + (r - s) l = -nkl \rar e_{p,r,s} \in \sA_{(n)}(k,l) \, .
\]
Then, suppose $p k + (r - s) l = -nkl$. Since $k,l \in \nn$ are coprime there exists integers $d_0,d_1 \in \zz$ such that 
$p = d_0 l$ and $(r - s) = d_1 k$. Furthermore, $d_0 + d_1 = - n$. \\
Suppose first that $(r - s) \, , \, \, p \geq 0$. Then,
\[
e_{p,r,s} = z_0^p z_1^{(r -s)} (z_1 z_1^*)^s = z_0^{l d_0} z_1^{k d_1} (z_1 z_1^*)^s \in \sA_{(-d_0 - d_1)}(k,l) = \sA_{(n)}(k,l) \, .
\]
Suppose next that $p \geq 0$ and $(r - s) \leq 0$. Then, 
\[
e_{p,r,s} = z_0^p (z_1^*)^{s - r} (z_1 z_1^*)^r = z_0^{l d_0} (z_1^*)^{-d_1 k} (z_1 z_1^*)^r \, .
\]
We now have two sub-cases: Either $d_0 \geq - d_1$ or $-d_1 \geq d_0$. When $d_0 \geq - d_1$, it follows from the above that
\[
e_{p,r,s} = z_0^{l (d_0 + d_1)} z_0^{-d_1 l} (z_1^*)^{-d_1 k} (z_1 z_1^*)^r \in \sA_{(n)}(k,l) \, .
\]
On the other hand, if $-d_1 \geq d_0$, we have that
\[
e_{p,r,s} = z_0^{l d_0} (z_1^*)^{k d_0} (z_1^*)^{(-d_1 - d_0) k} (z_1 z_1^*)^r  \in \sA_{(n)}(k,l) \, .
\]
The remaining two cases (when $p \leq 0$ and $(r -s) \geq 0$ and when $p \, , \, \, (r - s) \leq 0$) follow by 
similar arguments. This proves the lemma.
\end{proof}

\begin{prop}\label{p:zetgralen}
The subspaces $\big\{ \sA_{(dn)}(k,l) \big\}_{n \in \zz}$ gives $\C O(L_q(dlk;k,l))$ the structure of a $\zz$-graded unital $*$-algebra.
\end{prop}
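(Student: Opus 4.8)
The plan is to reduce the statement to the spectral-subspace characterisation of Lemma~\ref{l:chaspesub}, after first observing that the cyclic action $\al^{1/d}$ is nothing but the restriction of the weighted circle action $\{\si_w^{k,l}\}_{w\in S^1}$ to the group of $(dlk)$-th roots of unity.

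First I would set $w_0 := \exp(2\pi\ii/(dlk))$ and note that $\si_{w_0}^{k,l}(z_0) = w_0^k z_0 = \exp(2\pi\ii/(dl))\,z_0$ and $\si_{w_0}^{k,l}(z_1) = w_0^l z_1 = \exp(2\pi\ii/(dk))\,z_1$, so that $\al^{1/d}(1,\cdot)$ and $\si_{w_0}^{k,l}$ agree on the generators of $\C O(S_q^3)$, hence everywhere. Since $\zz/(dlk)\zz$ is generated by $1$, it follows that $\C O(L_q(dlk;k,l))$ is precisely the fixed-point set of $\si_{w_0}^{k,l}$. Now for any $n\in\zz$ and $x\in\sA_{(dn)}(k,l)$, Lemma~\ref{l:chaspesub} gives $\si_w^{k,l}(x)=w^{-dnkl}x$ for all $w\in S^1$; evaluating at $w=w_0$ and using $w_0^{dlk}=1$ shows $\si_{w_0}^{k,l}(x)=x$. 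Hence $\sA_{(dn)}(k,l)\su\C O(L_q(dlk;k,l))$ for every $n$.

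The main step is the reverse inclusion, obtained from the vector space basis $\{e_{p,r,s}\mid p\in\zz,\ r,s\in\nn_0\}$ of $\C O(S_q^3)$. Since $\si_{w_0}^{k,l}(e_{p,r,s})=w_0^{\,pk+(r-s)l}e_{p,r,s}$, an element $x=\sum c_{p,r,s}e_{p,r,s}$ is $\si_{w_0}^{k,l}$-invariant if and only if $c_{p,r,s}=0$ whenever $pk+(r-s)l\not\equiv 0\ (\mathrm{mod}\ dlk)$. For each surviving basis vector one may write $pk+(r-s)l=-dnkl$ for a uniquely determined $n\in\zz$, and Lemma~\ref{l:chaspesub} then places $e_{p,r,s}$ in $\sA_{(dn)}(k,l)$. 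Grouping the terms of the expansion of $x$ according to this value of $n$ exhibits $x$ as a finite sum of elements of the $\sA_{(dn)}(k,l)$, and the decomposition is manifestly unique since distinct $\sA_{(dn)}(k,l)$ meet the basis in disjoint sets (equivalently, they lie in distinct eigenspaces of any fixed $\si_w^{k,l}$ with $w$ not a root of unity). Together with the previous paragraph this gives $\C O(L_q(dlk;k,l))=\bigoplus_{n\in\zz}\sA_{(dn)}(k,l)$ as vector spaces.

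It remains to verify compatibility with the $*$-algebra structure, which is immediate from Lemma~\ref{l:chaspesub} and the fact that each $\si_w^{k,l}$ is a $*$-automorphism: for $x\in\sA_{(dn)}(k,l)$ and $y\in\sA_{(dm)}(k,l)$ one has $\si_w^{k,l}(xy)=w^{-d(n+m)kl}xy$, so $xy\in\sA_{(d(n+m))}(k,l)$; and $\si_w^{k,l}(x^*)=\overline{w^{-dnkl}}\,x^*=w^{dnkl}x^*$, so $\sA_{(dn)}(k,l)^*\su\sA_{(-dn)}(k,l)$, with equality following by applying this twice. Finally $1\in\C O(W_q(k,l))=\sA_{(0)}(k,l)$, so the grading is unital. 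The only slightly delicate bookkeeping is the claim in the third paragraph that $pk+(r-s)l\equiv 0\ (\mathrm{mod}\ dlk)$ really does land the basis vector in some $\sA_{(dn)}(k,l)$ — but this is exactly what the proof of Lemma~\ref{l:chaspesub} supplies, using that $\gcd(k,l)=1$ forces $l\mid p$ and $k\mid(r-s)$.
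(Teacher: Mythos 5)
Your proposal is correct and follows essentially the same route as the paper: both reduce the direct-sum decomposition to Lemma~\ref{l:chaspesub} and use the monomial basis $\{e_{p,r,s}\}$ of $\C O(S_q^3)$, with the algebra/$*$-structure compatibility following because $\si_w^{k,l}$ is a $*$-automorphism. The only difference is cosmetic: you make explicit the identification of $\al^{1/d}(1,\cdot)$ with $\si^{k,l}_{w_0}$ for $w_0=\exp(2\pi\ii/(dlk))$ and spell out the inclusion $\sA_{(dn)}(k,l)\su\C O(L_q(dlk;k,l))$, both of which the paper leaves implicit.
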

\begin{proof}
We need to prove that the vector space sum provides a bijection
\[
\op_{n \in \zz} \sA_{(dn)}(k,l) \to \C O(L_q(dlk;k,l)) \, .
\]

Suppose thus that $\sum_{n \in \zz} x_n = 0$ where $x_n \in \sA_{(dn)}(k,l)$ for all $n \in \zz$ and $x_n = 0$ for all but finitely many $n \in \zz$. It then follows from Lemma \ref{l:chaspesub} that the terms $x_n$ lie in different homogeneous spaces for the circle action $\big\{ \si_w^{k,l} \big\}_{w \in S^1}$ on $\C O(S_q^3)$. We may then conclude that $x_n = 0$ for all $n \in \zz$. This proves the claimed injectivity.

Next, let $x \in \C O(L_q(dlk;k,l))$. Without loss of generality we may take $x = e_{p,r,s}$ for some $p \in \zz$ and $r,s \in \nn_0$. The fact that $x \in \C O(L_q(dlk;k,l))$ then means that
\[
p/(dl) + (r -s)/(dk) \in \zz \ \lrar\ p k + (r - s) l \in (dkl) \ \zz
\]
It then follows from Lemma~\ref{l:chaspesub} that $e_{p,r,s} \in \sum_{n \in \zz} \sA_{(dn)}(k,l)$. This proves surjectivity.

Finally, let $x \in \sA_{(dn)}(k,l)$ and $y \in \sA_{(dm)}(k,l)$. It only remains to prove that $x^* \in \sA_{(-dn)}(k,l)$ and 
$x \, y \in \sA_{(d(n+m))}(k,l)$. But these properties also follow immediately from 
Lemma~\ref{l:chaspesub} since $\si_w^{k,l}$ is a $*$-automorphism of $\C O(S_q^3)$ for each $w \in S^1$.
\end{proof}

\subsection{Lens spaces as quantum principal bundles}
The right-modules $\sA_{(1)}(k,l)$ and $\sA_{(-1)}(k,l)$ play a central role. Recall from \eqref{n-subsp} that they are given by
\[
\begin{split}
\sA_{(1)}(k,l) & := (z_1^*)^{k} \cd \C O(W_q(k,l)) + (z_0^*)^l \cd \C O(W_q(k,l))  \q \M{and} \\
\sA_{(-1)}(k,l) & := z_1^k \cd \C O(W_q(k,l)) + z_0^l \cd \C O(W_q(k,l)) \, .
\end{split} 
\]

\begin{prop}\label{p:fingenpro}
There exist elements
\[
\xi_1,\xi_2,\be_1,\be_2 \in \sA_{(1)}(k,l) \q \M{and} \q
\eta_1, \eta_2, \al_1,\al_2 \in \sA_{(-1)}(k,l)
\]
such that
\[
\xi_1 \eta_1 + \xi_2 \eta_2 = 1 = \al_1 \be_1 + \al_2 \be_2
\]
\end{prop}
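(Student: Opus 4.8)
The plan is to produce the four elements explicitly and then verify the two partition-of-unity identities by direct computation in $\C O(S_q^3)$, using the defining relations of the quantum sphere. Since $\sA_{(1)}(k,l)$ is spanned by $(z_1^*)^k$ and $(z_0^*)^l$ over $\C O(W_q(k,l))$, and $\sA_{(-1)}(k,l)$ by $z_1^k$ and $z_0^l$, the natural first guess is to set $\xi_1 = (z_1^*)^k \cdot c_1$, $\xi_2 = (z_0^*)^l \cdot c_2$ with $\eta_1 = z_1^k$, $\eta_2 = z_0^l$ (or the mirrored choice for the $\al,\be$ identity), where $c_1, c_2 \in \C O(W_q(k,l))$ are correction factors built from $b = z_1 z_1^*$ chosen to absorb the $q$-deformation coefficients. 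So the first step is to compute $(z_1^*)^k z_1^k$ and $(z_0^*)^l z_0^l$ as elements of $\C O(W_q(k,l))$: using $z_1 z_1^* = b$ and the commutation relation $z_1 z_1^* = z_1^* z_1 + (q^{-2}-1) z_0 z_0^*$ together with $z_0 z_0^* + z_1 z_1^* = 1$, one gets $z_1^* z_1 = q^2(1-b) \cdot(\text{stuff})$ and more generally $(z_1^*)^k z_1^k = \prod_{m=1}^{k}(\,\cdot\,)$ a polynomial in $b$; similarly $(z_0^*)^l z_0^l$ expands as a polynomial in $b$. The key point is that these two polynomials in $b$ should be relatively prime in a suitable sense — their "zero sets" (the values of $b$ in the spectrum) are disjoint — so that there is a partition of unity $c_1 \cdot (z_1^*)^k z_1^k \cdots + c_2 \cdot (z_0^*)^l z_0^l \cdots = 1$ with $c_i$ polynomials in $b$.

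Concretely, I would carry this out as follows. First, establish the identities $z_0 z_0^* = 1 - b$ and the needed normal-ordering formulas so that $(z_1^*)^k z_1^k = \prod_{m=1}^k (1 - q^{-2m}b)\cdot b^{?}$ — actually one checks that $z_1^k (z_1^*)^k$ is a polynomial in $b$ vanishing nowhere except possibly at isolated points, and likewise for $z_0$. Second, exploit the relation already recorded in the excerpt, $a^* a = b^k \prod_{m=1}^l (1 - q^{-2m} b)$ and $a a^* = q^{2kl} b^k \prod_{m=0}^{l-1}(1-q^{2m}b)$, where $a = z_0^l(z_1^*)^k$; from these one reads off that $b^k \prod_{m=1}^l(1-q^{-2m}b)$ and the relevant power of $1-b$ coming from $z_0^*$-terms have no common factor. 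Third, invoke the fact that $\C O(W_q(k,l))$ contains polynomials in $b$ and that a Bézout-type identity $f(b) g_1(b) + (1-b)^{?} g_2(b) = 1$ can be solved whenever $f$ and $(1-b)$ share no root, giving the correction factors $c_i \in \C O(W_q(k,l))$. Fourth, assemble $\xi_j, \eta_j$ (and mirror for $\al_i, \be_i$, using the $aa^*$ relation instead) and verify $\xi_1\eta_1 + \xi_2\eta_2 = 1$ by substituting and collapsing the polynomial identity.

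The main obstacle I anticipate is bookkeeping the exact polynomials in $b$ that arise from normal-ordering $(z_1^*)^k z_1^k$ and $(z_0^*)^l z_0^l$ (and their starred versions) correctly, including all powers of $q$, and then exhibiting an \emph{explicit} Bézout pair rather than just asserting existence — the paper's later use of the idempotents in Corollary \ref{co:mnfpm} and in the $KK$-computation suggests the authors want the elements written down concretely. There is also a subtlety that $z_0$ is not normal and $z_0^l(z_0^*)^l \neq (z_0^*)^l z_0^l$ in general, so one must be careful which ordering appears; but since $z_0 z_0^* = 1 - b$ is central-ish (it commutes with $b$ up to the known relations) this should be manageable. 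A cleaner route, which I would try first, is simply to port the $l=1$ construction of \cite{BrFa:QT} (where $\xi_1 = z_1^*$, $\eta_1 = z_1$, $\xi_2 = z_0^*\,g(b)$, $\eta_2 = z_0$ for a suitable $g$) to general $(k,l)$ by replacing $z_1 \mapsto z_1^k$, $z_0 \mapsto z_0^l$ and adjusting $g$; verifying the two identities then reduces to a single polynomial identity in $b$ that one checks by evaluating on the (discrete) spectrum of $b$, namely $\{q^{2s+2lp}\}$, which is exactly the content encoded in the representations $\pi_s$ of \eqref{eq:defreptea}.
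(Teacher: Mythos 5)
Your plan — build $\xi_j\in(z_1^*)^k\,\C O(W_q(k,l))+(z_0^*)^l\,\C O(W_q(k,l))$ from correction polynomials in $b=z_1z_1^*$ and reduce the partition-of-unity to a polynomial identity, mirroring for the $\al,\be$ case — is essentially the paper's approach, and the ``port the \cite{BrFa:QT} construction with $z_1\mapsto z_1^k$, $z_0\mapsto z_0^l$'' route you sketch at the end is exactly what they do. Two points to fix up: first, your stated relation ``$z_1z_1^*=z_1^*z_1+(q^{-2}-1)z_0z_0^*$'' is the one for $z_0$, not $z_1$; since $n=1$ the generator $z_1$ is \emph{normal}, $z_1z_1^*=z_1^*z_1$, which is precisely what lets $F(b)$ slide past $z_1$ and gives $(z_1^*)^kz_1^k=b^k$ at once. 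Second, an abstract B\'ezout existence argument proves the proposition, but the paper needs \emph{explicit} $F$ and $G$ for the later idempotent and index-pairing computations; they get them by raising $(z_0^*)^lz_0^l+z_1^*F(b)z_1=1$ to the $k$-th power and — crucially — writing the second correction as $G\big(z_0^l(z_0^*)^l\big)$ rather than $G\big((z_0^*)^lz_0^l\big)$, which makes the $z_0^l$-conjugation exact (via $z_0^l\,(1-(z_0^*)^lz_0^l)=(1-z_0^l(z_0^*)^l)\,z_0^l$) and avoids the $q^{-2l}$-rescaling of $b$ you would otherwise have to absorb into your B\'ezout pair.
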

\begin{proof}
Firstly, a repeated use of the defining relations of the algebra $\C O(S_q^3)$ leads to
\[
(z_0^*)^l z_0^l = \prod_{m = 1}^l (1 - q^{-2m} z_1 z_1^*) \, .
\]
Then, define the polynomial $F \in \cc[X]$ by the formula
\[
F(X) := \Big( 1 - \prod_{m = 1}^l (1 - q^{-2m} X) \Big)/X \, .
\]
Since $z_1 z_1^* = z_1^* z_1$ one has that
\[
(z_0^*)^l z_0^l + z_1^* \ F(z_1 z_1^*) \ z_1 = 1 \, .
\]
In particular, this implies that
\[
\begin{split}
1 & = \big( (z_0^*)^l z_0^l + z_1^* \ F(z_1 z_1^*) \ z_1 \big)^k
= \sum_{j = 0}^k \big( (z_0^*)^l z_0^l \big)^j \ \big( z_1^* \ F(z_1 z_1^*) \ z_1 \big)^{k-j} \ {k \choose j} \\
& = (z_1^*)^k  \big( F(z_1 z_1^*) \big)^k z_1^k + \sum_{j = 1}^k  \big( (z_0^*)^l z_0^l \big)^j \ 
\big( 1 - (z_0^*)^l z_0^l \big)^{k - j} \ {k \choose j} \\
& = (z_1^*)^k  \big( F(z_1 z_1^*) \big)^k z_1^k + 
(z_0^*)^l \left\{ \sum_{j = 1}^k  \big( z_0^l (z_0^*)^l \big)^{j-1} \big( 1 - z_0^l (z_0^*)^l \big)^{k - j} \ {k \choose j} \right\} z_0^l \, .
\end{split}
\]
Define now the polynomial $G \in \cc[X]$ by the formula
\begin{equation}\label{eq:geedef}
G(X) := (1 - (1 - X)^k )/X =\sum_{j=1}^k X^{j-1} (1-X)^{k-j} \ {k \choose j} \, ,
\end{equation}
so that
\[
\sum_{j = 1}^k  \big( z_0^l (z_0^*)^l \big)^{j-1} \big( 1 - z_0^l (z_0^*)^l \big)^{k - j} \ {k \choose j} = G\big( z_0^l (z_0^*)^l \big) \, .
\]
And this enables us to write the above identities as\\
\begin{equation}\label{eq:promod}
1 = (z_1^*)^k  \big( F(z_1 z_1^*) \big)^k z_1^k + (z_0^*)^l G\big(z_0^l (z_0^*)^l \big) z_0^l \, .
\end{equation}

\noindent
Notice that both $F(z_1 z_1^*)$ and $G\big(z_0^l (z_0^*)^l \big)$ belong to $\C O(W_q(k,l))$. We thus define
\begin{align*}
& \xi_1 := (z_1^*)^k  \big( F(z_1 z_1^*) \big)^k \, , \qquad  \eta_1 := z_1^k \, ,  \\
& \xi_2 := (z_0^*)^l \, G\big(z_0^l (z_0^*)^l \big)  \, , \, \qquad  \eta_2 := z_0^l  
\end{align*}
and this proves the first half of the proposition.
\medskip

\noindent 
To prove the second half, we consider instead the identity
\[
z_0^l (z_0^*)^l = \prod_{m = 0}^{l-1} (1 - q^{2m} z_1^* z_1) \, ,
\]
which again follows by a repeated use of the defining identities for $\C O(S_q^3)$.

The polynomial $\wit F \in \cc[X]$ is now given by the formula
\[
\wit F(X) := \Big( 1 - \prod_{m = 0}^{l-1} (1 - q^{2m} X) \Big)/X \, .
\]
and we obtain that
\[
z_0^l (z_0^*)^l + z_1\tilde{ F}(z_1 z_1^*) z_1^* = 1 \, .
\]
By taking $k^{\T{th}}$ powers and computing as above, this yields that
\[
\begin{split}
1 & = z_1^k  \big( \tilde{F}(z_1 z_1^*) \big)^k (z_1^*)^k + 
z_0^l \left\{ \sum_{j = 1}^k  {k \choose j} \big( (z_0^*)^l z_0^l \big)^{j-1} \big( 1 - (z_0^*)^l z_0^l \big)^{k - j} \right\} (z_0^*)^l \, .
\end{split}
\]
This identity may be rewritten as
\[
1 = z_1^k  \big( \wit F(z_1 z_1^*) \big)^k (z_1^*)^k + z_0^l G\big((z_0^*)^l z_0^l \big) (z_0^*)^l \, ,
\]
where $G \in \cc[X]$ is again the one defined by \eqref{eq:geedef}. \\
Since both $\wit{F}(z_1 z_1^*)$ and $G\big( (z_0^*)^l z_0^l \big)$ belong to $\C O(W_q(k,l))$ we define
\begin{align*}
& \al_1 := z_1^k  \big( \wit F(z_1 z_1^*) \big)^k \, , \qquad  \be_1 := (z_1^*)^k  \, , \\
& \al_2 := z_0^l \, G\big((z_0^*)^l z_0^l \big)  \, , \,  \qquad \be_2 := (z_0^*)^l \, . 
\end{align*}
This ends the proof of the present proposition.
\end{proof} 

The next proposition is now an immediate consequence of Proposition~\ref{p:zetgralen}, Proposition~\ref{p:fingenpro}, Theorem~\ref{t:quapriseq}, and Proposition~\ref{p:quaprifin}.

\begin{prop} 
The triple $\big( \C O(L_q(dlk); k,l), \C O(U(1)), \C O(W_q(k,l))\big)$ is a quantum principal $U(1)$-bundle for each $d \in \nn$.
\end{prop}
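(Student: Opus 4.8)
The plan is to assemble the statement directly from the machinery that has already been built up in the preceding sections, since essentially all the work has been done. First I would recall that Proposition~\ref{p:zetgralen} establishes that the subspaces $\big\{ \sA_{(dn)}(k,l)\big\}_{n\in\zz}$ make $\C O(L_q(dlk;k,l))$ into a $\zz$-graded unital $*$-algebra, with degree-zero part $\sA_{(0)}(k,l) = \C O(W_q(k,l))$ (this last identification being the content of the $n=0$ case of the defining formula \eqref{n-subsp} together with Lemma~\ref{l:chaspesub}). This is the point where we are allowed to invoke the correspondence between $\zz$-gradings and $\C O(U(1))$-comodule algebra structures developed in \S\ref{se:rzga}: the grading induces the coaction $\De_R : x \mapsto x\ot z^{-n}$ for $x$ of degree $n$, and the subalgebra of coinvariants is exactly $\C O(W_q(k,l))$.

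Next I would treat the case $d=1$ separately and then bootstrap. For $d=1$, Theorem~\ref{t:quapriseq} reduces the claim that $\big(\C O(L_q(lk;k,l)),\C O(U(1)),\C O(W_q(k,l))\big)$ is a quantum principal $U(1)$-bundle to the purely algebraic task of exhibiting finite sequences $\{\xi_j\},\{\be_i\}$ in $\sA_{(1)}(k,l)$ and $\{\eta_j\},\{\al_i\}$ in $\sA_{(-1)}(k,l)$ with $\sum_j \xi_j\eta_j = 1 = \sum_i \al_i\be_i$. But this is precisely what Proposition~\ref{p:fingenpro} provides, with $N=M=2$ and the explicit elements $\xi_1,\xi_2,\eta_1,\eta_2,\al_1,\al_2,\be_1,\be_2$ written down there in terms of $z_0,z_1$ and the polynomials $F,\wit F,G$. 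Hence the $d=1$ case follows by combining Proposition~\ref{p:zetgralen}, Proposition~\ref{p:fingenpro} and Theorem~\ref{t:quapriseq}.

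Finally, for general $d\in\nn$ I would observe that $\C O(L_q(dlk;k,l))$, with its grading by $\big\{\sA_{(dn)}(k,l)\big\}_{n\in\zz}$, is exactly the algebra $\sA^{1/d}$ obtained from $\sA = \C O(L_q(lk;k,l))$ (graded by $\{\sA_{(n)}(k,l)\}$) in the sense of \S\ref{se:rzga}, namely $\sA^{1/d} = \op_{n\in\zz}\sA_{(dn)}$; one should check this identification of graded algebras, which amounts to matching the $d$-th power of the cyclic group action in \eqref{actzz} with the restriction of the grading, and is immediate from the definitions. Then Proposition~\ref{p:quaprifin} applies verbatim: since $\big(\sA,\C O(U(1)),\sA_{(0)}\big)$ is a quantum principal $U(1)$-bundle (the $d=1$ case just proved), so is $\big(\sA^{1/d},\C O(U(1)),\sA_{(0)}\big)$ for every $d$. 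I do not anticipate a genuine obstacle here, as the statement is explicitly announced in the excerpt as ``an immediate consequence'' of the four cited results; the only mild care needed is the bookkeeping identification $\C O(L_q(dlk;k,l)) = \big(\C O(L_q(lk;k,l))\big)^{1/d}$ as $\zz$-graded $*$-algebras, and keeping track that the coinvariant subalgebra does not change when passing from $\sA$ to $\sA^{1/d}$ (it remains $\C O(W_q(k,l))$ in all cases), which is exactly the content built into the definition of $\sA^{1/d}$ and into Proposition~\ref{p:quaprifin}.
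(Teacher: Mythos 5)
Your argument is correct and follows exactly the route the paper itself takes: the paper states the result as an immediate consequence of Proposition~\ref{p:zetgralen}, Proposition~\ref{p:fingenpro}, Theorem~\ref{t:quapriseq}, and Proposition~\ref{p:quaprifin}, and your write-up simply spells out that chain (settling $d=1$ via Theorem~\ref{t:quapriseq} and Proposition~\ref{p:fingenpro}, then bootstrapping via Proposition~\ref{p:quaprifin} after identifying $\C O(L_q(dlk;k,l))$ with $\sA^{1/d}$). No gaps.
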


\subsection{$C^*$-completions}
We fix $k,l \in \nn$ to be coprime positive integers. 
Let $d \in \nn$. With $C(S_q^3)$ the $C^*$-algebra of continuous functions on the quantum sphere $S_q^3$, 
the action of the cyclic group $\zz/ (dlk) \zz$ given on generators in \eqref{actzz} results into an action
\[
\al^{1/d} : \zz/(dkl) \zz \ti C(S_q^3) \to C(S_q^3) \, .
\]

\begin{dfn}
The $C^*$-algebra of continuous functions on the \emph{quantum lens space} $L_q(dlk;k,l)$ is the fixed point algebra of this action. 
It is denoted by $C(S_q^3)^{1/d}$. 
Thus
\[
C(S_q^3)^{1/d} := \big\{ x \in C(S_q^3) \mid \al^{1/d}(1,x) = x \big\} \, .
\]  
\end{dfn}

\begin{lemma}
The $C^*$-quantum lens space  $C(S_q^3)^{1/d}$
is the closure of the algebraic quantum lens space $\C O(L_q(dkl;k,l))$ with respect to the universal $C^*$-norm on $\C O(S_q^3)$.
\end{lemma}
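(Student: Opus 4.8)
The statement asserts that the fixed point $C^*$-algebra $C(S_q^3)^{1/d}$ coincides with the closure of $\C O(L_q(dkl;k,l))$ inside $C(S_q^3)$ (the latter being equipped with its universal $C^*$-norm). The plan is to verify two inclusions. First I would observe that $\C O(L_q(dkl;k,l)) \su C(S_q^3)^{1/d}$, since the generators of the algebraic lens space are by definition fixed by the cyclic group action $\al^{1/d}$ and this property persists under the canonical inclusion $\C O(S_q^3) \su C(S_q^3)$; as $C(S_q^3)^{1/d}$ is norm-closed, it contains the closure $\T{cl}\big(\C O(L_q(dkl;k,l))\big)$ as well. The reverse inclusion is the substantive one.

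For the reverse inclusion I would use the averaging (conditional expectation) argument already employed in the proof of Lemma~\ref{l:denspe}. Concretely, define the bounded linear projection
\[
E : C(S_q^3) \to C(S_q^3)^{1/d} \, , \q E(x) := \frac{1}{dkl} \sum_{j = 0}^{dkl - 1} \al^{1/d}(j, x) \, ,
\]
which is a norm-one idempotent with range exactly $C(S_q^3)^{1/d}$. The key point is that $E$ maps $\C O(S_q^3)$ onto $\C O(L_q(dkl;k,l))$: indeed $\al^{1/d}$ restricts to an action on the coordinate algebra $\C O(S_q^3)$, so $E$ restricts there, and its range on $\C O(S_q^3)$ is precisely the fixed point subalgebra $\C O(L_q(dkl;k,l))$ by definition. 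Now take any $x \in C(S_q^3)^{1/d}$. Since $\C O(S_q^3)$ is dense in $C(S_q^3)$ (by the very definition of the universal $C^*$-completion), pick a sequence $x_m \in \C O(S_q^3)$ with $x_m \to x$. Then $E(x_m) \in \C O(L_q(dkl;k,l))$ and, because $\|E\| \le 1$ and $E(x) = x$, we get $E(x_m) \to E(x) = x$ in norm. Hence $x \in \T{cl}\big(\C O(L_q(dkl;k,l))\big)$, which gives $C(S_q^3)^{1/d} \su \T{cl}\big(\C O(L_q(dkl;k,l))\big)$ and completes the proof.

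The only mild subtlety — and the step I would be most careful about — is checking that the range of $E$ restricted to $\C O(S_q^3)$ is genuinely all of $\C O(L_q(dkl;k,l))$ and not merely contained in it; but this is immediate since $E$ acts as the identity on $\C O(L_q(dkl;k,l))$ (every element there is $\al^{1/d}$-invariant), so $\C O(L_q(dkl;k,l)) = E\big(\C O(L_q(dkl;k,l))\big) \su E\big(\C O(S_q^3)\big)$, while the opposite containment follows because each $\al^{1/d}(j,\cdot)$ preserves $\C O(S_q^3)$ and the average of invariant-producing terms stays in the fixed point set. Everything else is the standard density-plus-bounded-projection argument, exactly parallel to Lemma~\ref{l:denspe}, so no genuine obstacle arises.
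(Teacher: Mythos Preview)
Your proof is correct and follows exactly the paper's approach: the paper's own argument consists of defining the averaging projection $E_{1/d}(x) = \frac{1}{dkl}\sum_{m=1}^{dkl}\al^{1/d}([m],x)$ and stating that the lemma follows from it, which is precisely the conditional-expectation-plus-density argument you have spelled out in detail. Your write-up simply expands the one-line proof in the paper.
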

\begin{proof}
This follows by applying the bounded operator $E_{1/d} : C(S_q^3) \to C(S_q^3)^{1/d}$, 
\[
E_{1/d} : x \mapsto \frac{1}{dkl}\sum_{m = 1}^{dkl} \al^{1/d}([m],x) \, ,
\]
with $[m]$ denoting the residual class in $\zz/(dkl) \zz$ of the integer $m$. 
\end{proof}

Alternatively, and in parallel with Definition~\ref{de:cqwps}, we could define the $C^*$-quantum lens space as the universal enveloping $C^*$-algebra of the algebraic quantum lens space $\C O(L_q(dkl;k,l))$. We will denote this $C^*$-algebra by $C(L_q(dkl;k,l))$.
\begin{lemma}
For all $d \in \nn$, the identity map $\C O(L_q(dkl;k,l)) \to \C O(L_q(dkl;k,l))$ induces an isomorphisms of $C^*$-algebras,
\[
C(S_q^3)^{1/d} \simeq C(L_q(dkl;k,l)) \, .
\]
\begin{proof}
We use Theorem~\ref{t:norequpim}. Indeed, let $d \in \nn$ and let $\| \cd \| : \C O(S_q^3) \to [0,\infty)$ and $\| \cd \|' : \C O(L_q(dkl;k,l)) \to [0,\infty)$ denote the universal $C^*$-norms of the two different unital $*$-algebras in question. We then have $\| x \| \leq \| x \|'$ for all $x \in \C O(L_q(dkl;k,l))$ since the inclusion $\C O(L_q(dkl;k,l)) \to \C O(S_q^3)$ induce a $*$-homomorphism $C(L_q(dkl;k,l)) \to C(S_q^3)^{1/d}$. But we also have $\| x \|' \leq \| x \|$ since the restriction $\| \cd \| : \C O(W_q(k,l)) \to [0,\infty)$ is the maximal $C^*$-norm on $\C O(W_q(k,l))$ by Lemma~\ref{l:norequtea}.
\end{proof}
\end{lemma}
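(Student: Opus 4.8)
The plan is to invoke Theorem~\ref{t:norequpim} with $\sA = \C O(S_q^3)$, equipped with the $\zz$-grading $\sA_{(n)} = \sA_{(n)}(k,l)$ from Proposition~\ref{p:zetgralen} (so that $\sA^{1/d}$ in the sense of that theorem is precisely $\C O(L_q(dkl;k,l))$), and to compare the two $C^*$-norms: the universal $C^*$-norm $\|\cdot\|$ on $\C O(S_q^3)$ restricted to $\C O(L_q(dkl;k,l))$, and the universal $C^*$-norm $\|\cdot\|'$ of $\C O(L_q(dkl;k,l))$ taken as a $*$-algebra in its own right. For this I must check the hypothesis of Theorem~\ref{t:norequpim}, namely that the two norms agree on the degree-zero part $\sA_{(0)} = \C O(W_q(k,l))$.

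First I would establish the inequality $\|x\| \leq \|x\|'$ for all $x \in \C O(L_q(dkl;k,l))$. This is immediate: the inclusion $\C O(L_q(dkl;k,l)) \hookrightarrow \C O(S_q^3)$ composed with the universal representation of $\C O(S_q^3)$ is a $*$-representation of $\C O(L_q(dkl;k,l))$, hence is bounded by its universal norm $\|\cdot\|'$. Restricting this to $x \in \C O(W_q(k,l))$ gives $\|x\| \leq \|x\|'$ on the degree-zero part. Second I would establish the reverse inequality $\|x\|' \leq \|x\|$ for $x \in \C O(W_q(k,l))$: by Lemma~\ref{l:norequtea}, the restriction of the universal norm of $\C O(S_q^3)$ to $\C O(W_q(k,l))$ realizes $C(W_q(k,l))$, which by Definition~\ref{de:cqwps} is the \emph{universal} enveloping $C^*$-algebra of $\C O(W_q(k,l))$; hence this restricted norm is already the maximal $C^*$-norm on $\C O(W_q(k,l))$, so nothing — in particular not $\|\cdot\|'$ restricted there — can exceed it. Combining the two gives $\|x\| = \|x\|'$ for all $x \in \sA_{(0)}$.

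With the hypothesis of Theorem~\ref{t:norequpim} verified, it remains only to know that Assumption~\ref{a:semisat} holds for (one of, hence by that theorem both of) the circle actions in question; but this was already arranged in the $C^*$-completion discussion for quantum lens spaces (the spectral subspaces $\sA_{(\pm 1)}(k,l)$ are full and finitely generated projective by Proposition~\ref{p:fingenpro} and Corollary~\ref{co:mnfpm}, and separability of $C(W_q(k,l))$ follows from Corollary~\ref{c:ktwps} or directly from its description as $\wit{\oplus_{s=1}^l \sK}$). Theorem~\ref{t:norequpim} then yields that the identity map $\C O(S_q^3) \to \C O(S_q^3)$ induces an isomorphism of the two completions of the $\zz$-graded subalgebra $\C O(L_q(dkl;k,l))$, i.e.\ $C(S_q^3)^{1/d} \simeq C(L_q(dkl;k,l))$, which is exactly the claim.

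The only genuinely delicate point is the second norm inequality: one must be careful that the norm denoted $\|\cdot\|$ when restricted to $\C O(W_q(k,l))$ really is the maximal one, which is the content of Lemma~\ref{l:norequtea} together with the \emph{definition} of $C(W_q(k,l))$ as a universal enveloping $C^*$-algebra — not, say, a reduced completion. Everything else is a bookkeeping application of the machinery already set up. I expect no computational obstacle; the subtlety is purely in matching up the various universal properties correctly.
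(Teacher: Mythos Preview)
Your approach is essentially the paper's: compare the two $C^*$-norms on $\C O(L_q(dkl;k,l))$, show they agree on the degree-zero part $\C O(W_q(k,l))$ via Lemma~\ref{l:norequtea}, and invoke Theorem~\ref{t:norequpim}. One small slip: the $\zz$-graded algebra to which Theorem~\ref{t:norequpim} is applied must be $\C O(L_q(dkl;k,l))$ itself (with $n$th component $\sA_{(dn)}(k,l)$), not $\C O(S_q^3)$, since $\|\cdot\|'$ is only defined on the former; correspondingly, the spectral subspaces whose fullness you must check are $\sA_{(\pm d)}(k,l)$, which for $d>1$ requires Lemma~\ref{l:semisateq} (or Proposition~\ref{p:quaprifin}) rather than Proposition~\ref{p:fingenpro} alone.
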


From now on, to lighten the notation, denote by $B := C(W_q(k,l))$ the $C^*$-quantum weighted projective line. Furthermore, let $E$ denote the Hilbert $C^*$-module over $B$ obtained as the closure of the module $\sA_{(1)}(k,l)$ in the universal $C^*$-norm on the quantum sphere $\C O(S_q^3)$. As usual, we let $\phi : B \to \sL(E)$ denote the $*$-homomorphism induced by the left multiplication $B \ti C(S_q^3) \to C(S_q^3)$.

We are ready to realize the $C^*$-quantum lens spaces as Pimsner algebras.
\begin{theorem}\label{t:pimlenspa}
For all $d \in \nn$, there is an isomorphism of $C^*$-algebras,
\[
\C O_{E^{\hot_\phi d}} \simeq C(S_q^3)^{1/d} \, ,
\]
given by 
\[
S_{\xi_1 \olo \xi_d} \mapsto \xi_1 \clc \xi_d \quad \T{for all} \quad \xi_1,\ldots,\xi_d \in E \, . 
\]
\end{theorem}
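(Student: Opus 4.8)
The plan is to deduce Theorem \ref{t:pimlenspa} by assembling the machinery already in place, checking only that the circle action on $C(S_q^3)$ coming from the weighted grading satisfies Assumption \ref{a:semisat}. First I would fix notation: let $A := C(S_q^3)$ with the strongly continuous circle action $\{\si_w\}_{w \in S^1}$ induced by $\{\si_w^{k,l}\}_{w \in S^1}$, so that by Proposition \ref{p:zetgralen} together with Lemma \ref{l:chaspesub} the algebraic spectral subspace $\sA_{(n)}(k,l)$ is exactly the $w^{-nkl}$-eigenspace, and (using the conditional expectation argument of Lemma \ref{l:denspe}) its closure $A_{(n)}$ in the universal $C^*$-norm on $\C O(S_q^3)$ agrees with the $n$-th spectral subspace of $A$ for the action $\{\si_w^{1/(kl)}\}$; in particular $A_{(1)} = E$ and $A_{(0)} = C(S_q^3)_{(0)} \simeq B$ by Lemma \ref{l:norequtea}.

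The heart of the argument is then to verify Assumption \ref{a:semisat} for $(A,\si_w)$: that $A_{(0)}$ is separable, and that $A_{(1)}$ and $A_{(-1)}$ are full and countably generated over $A_{(0)}$. Separability of $A_{(0)} \simeq B = C(W_q(k,l))$ is immediate since $B$ is a separable (indeed unitalization of $\oplus_{s=1}^l \sK$) $C^*$-algebra. Fullness and countable (in fact finite) generation of $A_{(1)}$ and $A_{(-1)}$ follow from Proposition \ref{p:fingenpro}: the identities $\xi_1\eta_1 + \xi_2\eta_2 = 1 = \al_1\be_1 + \al_2\be_2$ with $\xi_i,\be_i \in \sA_{(1)}(k,l) \su E$ and $\eta_i,\al_i \in \sA_{(-1)}(k,l)$ say precisely that $\inn{\cdot,\cdot}$-spans of these finitely many elements hit $1 \in B$, so the inner-product ideals are all of $B$ (fullness), and the maps $\Phi_{(1)},\Psi_{(1)}$ (and their $(-1)$ analogues) of Corollary \ref{co:mnfpm}, after passing to $C^*$-closures, exhibit $E$ and $E^*$ as finitely generated projective, hence countably generated, Hilbert $C^*$-modules over $B$. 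One should note here that $\sA_{(-1)}(k,l)$ closes up to $E^* = \sK(E,B)$, which is consistent with $A_{(-1)}$ being the $(-1)$-spectral subspace; this is where the $*$-structure and the relation $\inn{\xi,\eta} = \xi^*\eta$ of the paragraph before Assumption \ref{a:semisat} are used.

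Having checked Assumption \ref{a:semisat}, the conclusion is essentially a citation. By Theorem \ref{t:pimcirsub} applied with this $A$ and with the integer $d$, the Pimsner algebra $\C O_{A_{(d)}} \simeq \C O_{(A_{(1)})^{\hot_\phi d}} = \C O_{E^{\hot_\phi d}}$ is isomorphic to $A^{1/d}$, the $C^*$-completion of $\sA^{1/d} = \op_{n\in\zz}\sA_{(dn)}(k,l)$, via $S_\xi \mapsto \xi$ for $\xi \in A_{(d)}$; unwinding the isomorphism $(A_{(1)})^{\hot_\phi d} \simeq (A^{1/d})_{(1)}$ of Proposition \ref{p:isospesub} given by the product map $\psi$, this reads $S_{\xi_1 \olo \xi_d} \mapsto \xi_1 \clc \xi_d$ for $\xi_1,\ldots,\xi_d \in E$. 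Finally I would identify $A^{1/d}$ with $C(S_q^3)^{1/d}$: both are completions of $\C O(L_q(dkl;k,l)) = \sA^{1/d}$, and the preceding lemma (the one identifying $C(S_q^3)^{1/d} \simeq C(L_q(dkl;k,l))$, together with Theorem \ref{t:norequpim} applied on $\sA_{(0)} = \C O(W_q(k,l))$ where all the relevant $C^*$-norms agree by Lemma \ref{l:norequtea}) shows the two completions coincide. Chaining these isomorphisms gives $\C O_{E^{\hot_\phi d}} \simeq C(S_q^3)^{1/d}$ with the stated formula.

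The main obstacle I anticipate is not conceptual but bookkeeping: one must be careful that the $C^*$-norm used to close $\sA_{(1)}(k,l)$ to $E$, the norm used to define $A = C(S_q^3)$, and the norm on $B = C(W_q(k,l))$ are mutually compatible (this is exactly the hypothesis "$\|x\| = \|x\|'$ for all $x \in \sA_{(0)}$" of Theorem \ref{t:norequpim}, guaranteed by Lemma \ref{l:norequtea} saying the universal norm on $\C O(W_q(k,l))$ restricts from the universal norm on $\C O(S_q^3)$), and that $\phi : B \to \sL(E)$ genuinely satisfies Assumption \ref{a:hilmod} — but that last point is precisely the content of the Lemma following Assumption \ref{a:semisat}, applied to our data. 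So the proof is a matter of correctly invoking Theorem \ref{t:pimcirsub} and Theorem \ref{t:norequpim} with the verification of Assumption \ref{a:semisat} supplied by Proposition \ref{p:fingenpro}.
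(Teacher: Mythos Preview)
Your strategy is the paper's: check Assumption \ref{a:semisat} via Proposition \ref{p:fingenpro} and then invoke Theorem \ref{t:pimcirsub}. One correction in the setup: you cannot take $A = C(S_q^3)$, because the reparametrized action you call ``$\si_w^{1/(kl)}$'' is not defined on the whole sphere (an element like $z_0$ sits in a $\si^{(k,l)}$-eigenspace whose index is not a multiple of $kl$). The paper instead takes $A$ to be the closure of $\C O(L_q(kl;k,l)) = \op_{n\in\zz}\sA_{(n)}(k,l)$ inside $C(S_q^3)$, with the circle action $\rho_w$ coming from that $\zz$-grading; isometry of $\rho_w$ is checked by writing $\rho_w = \si_z^{(k,l)}|_A$ for any $z$ with $z^{kl}=w$. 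With this $A$ one has $A_{(1)}=E$, $A_{(0)}\simeq B$ by Lemma \ref{l:norequtea}, and $A^{1/d}=C(S_q^3)^{1/d}$ directly from the lemma preceding the theorem, so your detour through Theorem \ref{t:norequpim} is unnecessary. Apart from this bookkeeping slip your argument is the paper's.
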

\begin{proof}
Recall from Proposition~\ref{p:zetgralen} that, for all $d \in \nn$, it holds that
\[
\C O(L_q(dlk;k,l)) \simeq \op_{n \in \zz} \sA_{(dn)}(k,l) \, . 
\]

Let us denote by $\{\rho_w\}_{w \in S^1}$ the associated circle action on $\C O(L_q(dlk;k,l))$. Then, we have $\|\rho_w(x) \| \leq \|x \|$ for all $x \in \C O(L_q(dlk;k,l))$ and all $w \in S^1$, where $\| \cd \|$ is the norm on $C(S_q^3)^{1/d}$ (the restriction of the maximal $C^*$-norm on $C(S_q^3)$). To see this, choose a $z \in S^1$ such that $z^{dkl} = w$. Then $\si_z^{(k,l)}(x) = \rho_w(x)$, where the weighted circle action $\si^{(k,l)} : S^1 \ti C(S_q^3) \to C(S_q^3)$ is the one defined at the beginning of \S\ref{ss:cooalgwei}.
 
An application of Theorem~\ref{t:pimcirsub} now shows that $\C O_{E^{\hot_\phi d}} \simeq C(S_q^3)^{1/d}$ for all $d \in \nn$, provided that $\{\rho_w\}_{w \in S^1}$ satisfies the conditions of Assumption~\ref{a:semisat}. To this end,  
taking into account the analysis of the coordinate algebra $\C O(L_q(lk;k,l))$ provided in \S\ref{ss:cooalg}, the only non-trivial thing to check is that the collections 
\[
\inn{E,E} := \T{span}\big\{ \xi^* \eta \mid \xi, \eta \in E \big\} \q \T{and} \q \inn{E^*,E^*} := \T{span}\big\{ \xi \eta^* \mid \xi,\eta \in E \big\}
\]
are dense in $C(W_q(k,l))$. But this follows at once from Proposition~\ref{p:fingenpro}.
\end{proof}

\section{$KK$-theory of quantum lens spaces}\label{s:kktlen}

We now combine the results obtained until this point and, using methods coming from the Pimsner algebra constructions, we are able to compute the $KK$-theory of the quantum lens spaces $L_q(dkl;k,l)$ for any coprime $k,l \in \nn$ and any $d \in \nn$. 

As before we let $E$ denote the Hilbert $C^*$-module over the quantum weighted projective line $C(W_q(k,l))$ which is obtained as the closure of $\sA_{(1)}(k,l)$ in $C(S_q^3)$.

The two polynomials in $\C O(W_q(k,l))$ in the proof of Proposition~\ref{p:fingenpro}, written as
\[
\begin{split}
(F(z_1 z_1^*))^k & = \Big( \big(1 - (z_0^*)^l z_0^l \big)/ (z_1 z_1^*) \Big)^k \q \T{and} \\
G\big( z_0^l (z_0^*)^l \big) & = \big(1 - (1 - z_0^l (z_0^*)^l)^k \big)/ (z_0^l (z_0^*)^l) \, ,
\end{split}
\]
are manifestly positive, since $\| z_1 z_1^*\| \leq 1$ and thus also $\| z_0^l (z_0^*)^l \|, \| (z_0^*)^l z_0^l \| \leq 1$ in $C(W_q(k,l))$. 
Thus it makes sense to take their square roots:
\[
\begin{split}
\xi_1 & := F(z_1 z_1^*)^{k/2} = \Big( \big(1 - (z_0^*)^l z_0^l \big)/ (z_1 z_1^*) \Big)^{k/2} \in C(W_q(k,l)) \q \T{and} \\
\xi_0 & := G\big( z_0^l (z_0^*)^l \big)^{1/2} = \Big( \big(1 - (1 - z_0^l (z_0^*)^l)^k \big)/ (z_0^l (z_0^*)^l)\Big)^{1/2} \in C(W_q(k,l)) \, .
\end{split}
\]

Next, define the morphism of Hilbert $C^*$-modules $\Psi : E \to C(W_q(k,l))^2$ by
\[
\Psi : \eta \mapsto \ma{c}{\xi_1 z_1^k \ \eta \\ \xi_0 z_0^l \ \eta} \, ,
\]
whose adjoint $\Psi^* : C(W_q(k,l))^2 \to E$ is then given by 
\[
\Psi^* : \ma{c}{x \\ y} \mapsto (z_1^*)^k \xi_1 \ x + (z_0^*)^l \xi_0 \ y \, .
\]
It then follows from  \eqref{eq:promod} that $\Psi^* \Psi = \T{id}_E$. 
The associated orthogonal projection is
\begin{equation}\label{e:bp}
P := \Psi \Psi^* = \ma{cc}{ \xi_1 \ (z_1 z_1^*)^k \ \xi_1 & \xi_1 \ z_1^k (z_0^*)^l \ \xi_0 \\ 
\xi_0 \ z_0^l (z_1^*)^k \ \xi_1 & \xi_0 \ z_0^l (z_0^*)^l \ \xi_0 } \in M_2(C(W_q(k,l))) \, . 
\end{equation}

\subsection{Fredholm modules over quantum weighted projective lines}
We recall \cite[Chap.~IV]{Co94} that an \emph{even Fredholm module} over a $*$-algebra $\sA$ is a datum $(H, \rho, F, \gamma)$ where $H$ is a Hilbert space of a representation $\rho$ of $\sA$, the operator $F$ on $H$ is such that $F^2=F$ and $F^2= 1$, with a $\zz/2\zz$-grading $\gamma$, $\gamma^2=1$, which commutes with the representation and such that $\gamma F + F \gamma = 0$. Finally, for all $a\in\sA$ the commutator $[F, \rho(a)]$ is required to be compact. The Fredholm module is said to be \emph{$1$-summable} if the commutator $[F, \rho(a)]$ is trace class for all $a \in \sA$. 

Now, the quantum sphere $S_q^3$ is the `underlying manifold' of the quantum group $\mathrm{SU}_q(2)$.
The latter's counit when restricted to the subalgebra $\C O(W_q(k,l))$ yields a one-dimensional representation 
$\ep : \C O(W_q(k,l)) \to \cc$, simply given on generators by,
\[
\ep(z_1 z_1^*)= \ep(z_0^l (z_1^*)^k):= 0 \, , \quad \ep(1) = 1 \, .
\]
Next, let $H := l^2(\nn_0)\otimes \cc^2$. We use the subscripts ``$+$'' and ``$-$'' to indicate that the corresponding spaces are thought of as being even or odd respectively, for a $\zz/2\zz$-grading $\gamma$:$H_{\pm}$ will be two copies of $H$. 
For each $s \in \{1,\ldots,l\}$, with the $*$-representation $\pi_s $ given in \eqref{eq:defreptea}, define the even $*$-homomorphism 
\[
\rho_s : \C O(W_q(k,l)) \to \sL\big(H_+ \op H_- \big) \, ,
\quad \rho_s : x \mapsto \ma{cc}{\pi_s(\Psi x \Psi^*) & 0 \\ 0 & \ep(\Psi x \Psi^*) } \, .
\]
We are slightly abusing notation here: the element $\Psi x \Psi^*$ is a $2\times2$ matrix, hence $\pi_s $ and $\varepsilon$ have to be applied component-wise. Next, define
\begin{equation}\label{e:Fg}
F=\ma{cc}{0 & 1 \\ 1 & 0} \,, \quad \gamma = \ma{cc}{1 & 0 \\ 0 & -1} \, .
\end{equation}

\begin{lemma}\label{l:fmf}
The datum $\sF_s := \big(H_+ \op H_-, \rho_s, F, \gamma \big), 
$ defines an even $1$-summable Fredholm module over the coordinate algebra $\C O(W_q(k,l))$.
\end{lemma}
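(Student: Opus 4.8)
The plan is to verify the four defining properties of an even $1$-summable Fredholm module listed above, the algebraic ones being immediate from \eqref{e:Fg} and the analytic one (trace-class commutators) being a consequence of Lemma~\ref{l:factraide}. First I would dispose of the structural conditions. From the block matrices in \eqref{e:Fg} one reads off at once $F=F^*$, $F^2=1$, $\gamma=\gamma^*$, $\gamma^2=1$ and $F\gamma+\gamma F=0$; and since $\rho_s$ is diagonal with respect to the decomposition $H_+\op H_-$, it commutes with $\gamma$, so $\rho_s$ is an even representation. To see that $\rho_s$ is a $*$-homomorphism I would invoke the identity $\Psi^*\Psi=\T{id}_E$ coming from \eqref{eq:promod}: it makes the assignment $x\mapsto\Psi x\Psi^*=\Psi\phi(x)\Psi^*$ a (non-unital) $*$-homomorphism $\C O(W_q(k,l))\to M_2(B)$, where $B=C(W_q(k,l))$. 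Here one checks that the four entries of $\Psi\phi(x)\Psi^*$, which are of the form $\xi_i\,u\,x\,v^*\,\xi_j$ with $u,v$ monomials in $z_0,z_1$, indeed lie in $B$: the middle factor $u\,x\,v^*$ is a $\si_w^{(k,l)}$-invariant element of $\C O(S_q^3)$, hence lies in $\C O(W_q(k,l))$, and the $\xi_i$ lie in $B=C(W_q(k,l))$ by construction. Composing entrywise with the $*$-representation $\pi_s$, respectively with the character $\ep$, and taking the direct sum then yields the even $*$-homomorphism $\rho_s$.

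For $1$-summability I would compute $[F,\rho_s(x)]$ from the block forms: it is off-diagonal with off-diagonal block $\pm\big(\pi_s(\Psi x\Psi^*)-1\ot\ep(\Psi x\Psi^*)\big)$, where $1\ot\ep(\Psi x\Psi^*)$ denotes the scalar $2\times2$ matrix $\ep(\Psi x\Psi^*)$ acting diagonally on $l^2(\nn_0)\ot\cc^2$. Thus everything reduces to the claim that $\pi_s(b)-\ep(b)\cd 1\in\sL^1$ for each of the four entries $b$ of $\Psi x\Psi^*$; applying this entrywise places the off-diagonal block in $M_2(\sL^1)=\sL^1\big(l^2(\nn_0)\ot\cc^2\big)$, so that $[F,\rho_s(x)]$ is trace class, hence in particular compact.

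To prove the reduced claim I would first observe that $J:=\{b\in B:\pi_s(b)-\ep(b)\cd 1\in\sL^1\}$ is a $*$-subalgebra of $B$, since $\sL^1$ is a $*$-ideal and $\ep$ a character. By Lemma~\ref{l:factraide}, $\pi_s$ maps $\C O(W_q(k,l))$ into $\wit{\sL^1}$, so $\pi_s(b)=\mu(b)\cd1+T$ with $T\in\sL^1$ for $b\in\C O(W_q(k,l))$, and $b\mapsto\mu(b)$ is a $*$-character; since $\pi_s(z_1z_1^*)$ and $\pi_s(z_0^l(z_1^*)^k)$ are themselves trace class (again Lemma~\ref{l:factraide}, cf.~\eqref{eq:defreptea}), $\mu$ annihilates these two $*$-generators, whence $\mu=\ep$ and $\C O(W_q(k,l))\su J$. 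For the elements $\xi_0,\xi_1$ occurring in $\Psi$, which lie in $B$ but not in $\C O(W_q(k,l))$, I would use that $\xi_0^2,\xi_1^2\in\C O(W_q(k,l))$: then $\pi_s(\xi_i)$ is the positive square root of an operator of the form $\ep(\xi_i)^2\cd1+S$ with $S\in\sL^1$ and $\ep(\xi_i)>0$, and a diagonalisation (the corrections to the square roots of the eigenvalues being dominated by a constant times the absolute values of those of $S$) yields $\pi_s(\xi_i)-\ep(\xi_i)\cd1\in\sL^1$, so $\xi_0,\xi_1\in J$. Being a $*$-subalgebra containing $\C O(W_q(k,l))$, $\xi_0$ and $\xi_1$, $J$ then contains every entry $b=\xi_i\,u\,x\,v^*\,\xi_j$ of $\Psi x\Psi^*$, which finishes the verification that $[F,\rho_s(x)]$ is trace class for every $x$; in particular $\sF_s$ is an even $1$-summable Fredholm module.

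I expect the one genuinely non-formal point to be the treatment of the square-root elements $\xi_0,\xi_1$: they sit outside the coordinate algebra, so Lemma~\ref{l:factraide} does not apply to them directly, and one must route through $\xi_i^2\in\C O(W_q(k,l))$ together with the (elementary) continuous-functional-calculus estimate for the square root in order to land back in $\sL^1$. Everything else is bookkeeping with the $2\times2$ block structure of $F,\gamma,\rho_s$ and with the relation $\Psi^*\Psi=\T{id}_E$.
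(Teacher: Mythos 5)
Your proof is correct, and it follows the same overall strategy as the paper's: reduce the $1$-summability verification to a generating set plus the unit, by observing that the set of $b$ with $\pi_s(b)-\ep(b)\in\sL^1$ is a $*$-subalgebra of $C(W_q(k,l))$ (since $\pi_s$ and $\ep$ are both $*$-homomorphisms and $\sL^1$ is a two-sided $*$-ideal), with Lemma~\ref{l:factraide} supplying the trace-class input. The genuine difference is how the square-root elements $\xi_0,\xi_1$ are handled. You place them individually in the subalgebra $J$ via a functional-calculus estimate (using $\xi_i^2\in\C O(W_q(k,l))$, positivity, a uniform lower bound on the spectrum of $\pi_s(\xi_i^2)$, and the local Lipschitz property of $t\mapsto\sqrt t$ away from $0$). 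The paper avoids the square roots entirely: in the three matrices one actually has to test, namely $\Psi(z_1z_1^*)\Psi^*$, $\Psi(z_0^l(z_1^*)^k)\Psi^*$ and $P-\ep(P)=\Psi\Psi^*-\ep(\Psi\Psi^*)$, the $\xi_i$ only ever appear in the combinations $\xi_1^2(z_1z_1^*)^k$, $\xi_0^2 z_0^l(z_0^*)^l=1-(1-z_0^l(z_0^*)^l)^k$, and $z_1^k(z_0^*)^l=a^*$ (up to bounded factors), all of which are polynomials in the generators $a,a^*,b$, so that Lemma~\ref{l:factraide} applies directly with no spectral argument needed. Your approach is a touch longer and requires the extra observation that the eigenvalues of $\pi_s(\xi_i^2)$ are bounded away from zero (true here, and you correctly flag the square-root step as the one non-formal point), but it is more generic and would carry over to situations where the convenient algebraic simplifications are not available; the paper's computation is shorter precisely because it exploits the specific identities for $\xi_0^2$ and $\xi_1^2$.
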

\begin{proof}
It is enough to check that $\pi_s(\Psi z_1 z_1^* \Psi^*), \pi_s(\Psi z_0^l (z_1^*)^k \Psi^*) \in \sL^1(H)$ 
and furthermore that $\pi_s(P) - \ep(P) \in \sL^1(H)$, for  $P$ the projection in \eqref{e:bp}.

That the two operators involving the generators $z_1 z_1^*$ and $z_0^l (z_1^*)^k$ lie in $\sL^1(H)$ 
follows easily from Lemma~\ref{l:factraide}.  
To see that $\pi_s(P) - \ep(P) \in \sL^1(H)$ note that
\[
\ep(P) = \ma{cc}{0 & 0 \\ 0 & 1} \, .
\]
The desired inclusion then follows since Lemma \ref{l:factraide} yields that the operators $\pi_s(z_1 z_1^*)^k$, $\pi_s(z_0^l (z_1^*)^k)$, and $\pi_s(1 - z_0^l (z_0^*)^l)$ are of trace class.
\end{proof}

For $s = 0$, we take
\[
\rho_0 := \ma{cc}{\ep & 0 \\ 0 & 0} : C(W_q(k,l)) \to \sL(\cc \op \cc) 
\] 
and define the even $1$-summable Fredholm module
\[
\sF_0 := \big( \cc_+ \op \cc_-, \rho_0, F, \gamma \big) \, .
\]

\begin{remark}
The $1$-summable $l+1$ Fredholm modules over $\C O(W_q(k,l))$ we have defined are different from the $1$-summable Fredholm modules defined in \cite[\S4]{BrFa:QT}. The present Fredholm modules are obtained by ``twisting'' the Fredholm modules in \cite{BrFa:QT} with the Hilbert $C^*$-module $E$.
\end{remark}

\subsection{Index pairings}
Recall the representations $\pi_s$ of $C(W_q(k,l))$ given in \eqref{eq:defreptea}.

For each $r \in \{1,\ldots,l\}$, let $p_r \in C(W_q(k,l))$ denote the orthogonal projection defined by the requirement
\begin{equation}\label{eq:protea}
\pi_s(p_r) = \fork{ccc}{ e_{00} & \T{for} & s = r \\ 0 & \T{for} & s \neq r }  \, ,
\end{equation}
where $e_{00} : l^2(\nn_0) \to l^2(\nn_0)$ denotes the orthogonal projection onto the closed subspace $\cc e_0 \su l^2(\nn_0)$.
For $r = 0$, let $p_0 = 1 \in C(W_q(k,l))$. The classes of these $l+1$ projections $\{p_r, \ r=0,1, \dots, l\}$ form a basis for the group 
$K_0(C(W_q(k,l)))$ given in Corollary~\ref{c:ktwps}.

On the other hand we have the classes in the $K$-homology group $K^0(C(W_q(k,l)))$ represented by the even $1$-summable Fredholm modules $\sF_s$, $s = 0,\ldots,l$, which we described in the previous paragraph.

We are interested in computing the index pairings 
\[
\inn{[\sF_s],[p_r]} := \tfrac{1}{2} \ \T{Tr}\big( \ga F[F,\rho_s(p_r)] \big) \in \zz \, , \q \T{for} \, \, \, r,s \in \{0,\ldots,l\} \, .
\]

\begin{prop}\label{p:indpaicom} 
It holds that:
\[
\inn{[\sF_s],[p_r]} = \fork{ccc}{1 & \T{for} & s = r \\ 1 & \T{for} & r = 0 \\ 0 & & \T{else} } \, .
\]
\end{prop}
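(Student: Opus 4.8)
\textit{Plan.} The idea is to convert each pairing into an operator trace and compute it by functional calculus of the single self\-/adjoint generator $b:=z_1z_1^*$, treating separately the row $s=0$, the case $r\geq 1$, and the case $r=0$.

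First I would record the general mechanism. For an even $1$-summable Fredholm module of the shape $\big(H_+\op H_-,\rho_+\op\rho_-,F,\ga\big)$ with $F=\ma{cc}{0&1\\1&0}$ and $\ga=\ma{cc}{1&0\\0&-1}$, a one-line block computation gives $\tfrac12\,\T{Tr}\big(\ga F[F,\rho(p)]\big)=\T{Tr}\big(\rho_+(p)-\rho_-(p)\big)$ for every projection $p$, the operator $\rho_+(p)-\rho_-(p)$ being trace class precisely by $1$-summability. Hence for $s\geq 1$ the number $\inn{[\sF_s],[p_r]}$ is the trace of the trace-class operator $\pi_s(\Psi p_r\Psi^*)-1\ot\ep(\Psi p_r\Psi^*)$ on $l^2(\nn_0)\ot\cc^2$, while for $s=0$ one reads off directly $\inn{[\sF_0],[p_r]}=\ep(p_r)$, which equals $1$ for $r=0$ (as $p_0=1$) and $0$ for $r\geq 1$ (as $\ep$ is a character with $\ep(b)=0$, and $p_r$ is a spectral projection of $b$ at a point other than $0$, as noted next). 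This disposes of the row $s=0$.

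Next, for $r\geq 1$ and $s\geq 1$ the starting point is the identification $p_r=\chi_{\{q^{2r}\}}(b)$: on $l^2(\nn_0)$ the eigenvalue $q^{2r}$ of $\pi_s(b)$ is attained, and then only on $e_0$, exactly when $s=r$, so $\chi_{\{q^{2r}\}}(b)$ obeys the defining relation \eqref{eq:protea}. I would then simplify the two diagonal entries of $\Psi p_r\Psi^*\in M_2\big(C(W_q(k,l))\big)$ from \eqref{e:bp}. For the $(1,1)$-entry one uses that $z_1$ is normal, hence commutes with every function of $b$, so $z_1^k p_r(z_1^*)^k=p_r\,b^k=q^{2rk}p_r$ and $\xi_1^2 p_r=F(q^{2r})^k p_r$; together with the arithmetic identity $F(q^{2r})=q^{-2r}$ (the factor $m=r$ in $\prod_{m=1}^{l}(1-q^{-2m}q^{2r})$ vanishes) this yields $(\Psi p_r\Psi^*)_{11}=p_r$. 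For the $(2,2)$-entry one uses $z_0\,f(b)=f(q^2b)\,z_0$, so $z_0^l\,p_r=\chi_{\{q^{2(r-l)}\}}(b)\,z_0^l=0$, because $r\leq l$ forces $q^{2(r-l)}\geq 1>\sup\T{spec}(b)$, whence $(\Psi p_r\Psi^*)_{22}=0$. Since $\Psi p_r\Psi^*$ is a projection with vanishing $(2,2)$-corner its off-diagonal entries vanish as well, so $\Psi p_r\Psi^*=\T{diag}(p_r,0)$, and applying $\pi_s$ and $\ep$ gives $\inn{[\sF_s],[p_r]}=\T{Tr}\big(\pi_s(p_r)\big)-\ep(p_r)=\de_{sr}$. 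For $r=0$, $s\geq 1$ one has $\Psi p_0\Psi^*=\Psi\Psi^*=P$ and $\ep(P)=\T{diag}(0,1)$, so the pairing is $\T{Tr}\big(\pi_s(P_{11})\big)-\T{Tr}\big(\pi_s(1-P_{22})\big)$; writing $P_{11}=(1-(z_0^*)^lz_0^l)^k$ and $1-P_{22}=(1-z_0^l(z_0^*)^l)^k$ and using $(z_0^*)^lz_0^l=\prod_{m=1}^l(1-q^{-2m}b)$, $z_0^l(z_0^*)^l=\prod_{m=0}^{l-1}(1-q^{2m}b)$ together with $\pi_s(b)e_p=q^{2(s+lp)}e_p$, one checks that the eigenvalue of $\pi_s(1-P_{22})$ on $e_p$ equals that of $\pi_s(P_{11})$ on $e_{p+1}$, whereas $\pi_s(P_{11})$ acts as the identity on $e_0$ (a vanishing factor $1-q^0$ enters the complementary product over $j\in\{s-l,\ldots,s-1\}\ni 0$). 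Both traces are finite, so the difference telescopes to $1$; assembling the three cases gives the asserted matrix.

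I expect the real work to lie in the functional\-/calculus bookkeeping for the entries of $\Psi p_r\Psi^*$ when $r\geq 1$: recognising $p_r=\chi_{\{q^{2r}\}}(b)$, collapsing the constant $q^{2rk}F(q^{2r})^k$ to $1$ through $F(q^{2r})=q^{-2r}$, and verifying $z_0^l p_r=0$; once these identities are in place the remaining steps are routine.
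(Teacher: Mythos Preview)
Your argument is correct and complete. It differs from the paper's in two places, and in both your route is more elementary.

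For $r,s\in\{1,\ldots,l\}$ the paper does not compute $\Psi p_r\Psi^*$; it writes the pairing as $\T{Tr}\big(\pi_s(\Psi p_r\Psi^*)\big)$ and then invokes cyclicity of the trace for trace-class operators (citing \cite[Cor.~3.8]{Sim:TIA}) to move $\Psi$ past $\Psi^*$ and recover $\T{Tr}\big(\pi_s(p_r)\big)=\de_{sr}$. You instead identify $p_r=\chi_{\{q^{2r}\}}(b)$ and compute $\Psi p_r\Psi^*=\T{diag}(p_r,0)$ directly from the commutation rules $z_1b=bz_1$ and $z_0b=q^2bz_0$; this is more explicit and avoids the external reference. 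One small point worth making explicit when you write it up: the intermediate object $z_0^l p_r$ lives in $C(S_q^3)$ rather than $C(W_q(k,l))$, but by spectral permanence the self-adjoint element $b$ has the same spectrum in both $C^*$-algebras, so your bound $\sup\T{spec}(b)=q^2<1$ is valid in either, and $\chi_{\{q^{2(r-l)}\}}(b)=0$ holds unambiguously.

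For $r=0$, $s\in\{1,\ldots,l\}$ both you and the paper reduce the pairing to
\[
\T{Tr}\big(\pi_s(1-(z_0^*)^lz_0^l)^k\big)-\T{Tr}\big(\pi_s(1-z_0^l(z_0^*)^l)^k\big).
\]
The paper handles this via a separate lemma: first an induction on $k$ reduces to $k=1$, and then the remaining trace $\T{Tr}\big(\pi_s([z_0^l,(z_0^*)^l])\big)$ is evaluated through a $q$-binomial expansion and the identity $\prod_{m=1}^l(1-q^{2(s-m)})=0$. Your telescoping observation---that the eigenvalue of $\pi_s(1-P_{22})$ on $e_p$ equals that of $\pi_s(P_{11})$ on $e_{p+1}$, while $\pi_s(P_{11})e_0=e_0$ by the same vanishing factor---bypasses both the induction and the $q$-binomial algebra. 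This is a genuine simplification.
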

\begin{proof}
Suppose first that $r,s \in \{1,\ldots,l\}$. We then have:
\[
\inn{[\sF_s],[p_r]} = \T{Tr}\big( \pi_s(\Psi p_r \Psi^*) \big) \, ,
\]
and the above operator trace is well-defined since $\pi_s(\Psi p_r \Psi^*)$ is an orthogonal projection in $M_2(\sK)$ and it is therefore of trace class. We may then compute as follows:
\[
\begin{split}
\T{Tr}\big( \pi_s(\Psi p_r \Psi^*) \big) 
& = \T{Tr}\big( \pi_s(\xi_1 z_1^k p_r (z_1^*)^k \xi_1 )\big)
+ \T{Tr}\big( \pi_s(\xi_0 z_0^l p_r (z_0^*)^l \xi_0 ) \big) \\
& = \T{Tr}\big( \pi_s(p_r (z_1^*)^k \xi_1^2 z_1^k ) \big)
+ \T{Tr}\big( \pi_s(p_r (z_0^*)^l \xi_0^2 z_0^l) \big) \\
& = \T{Tr}\big( \pi_s(p_r) \big)
= \de_{sr} \, ,
\end{split}
\]
where the second identity follows from \cite[Cor.~3.8]{Sim:TIA} and 
$\de_{sr} \in \{0,1\}$ denotes the Kronecker delta.

If $r \in \{1,\ldots,l\}$ and $s = 0$, then $\rho_0(p_r) = 0$ and thus $\inn{[\sF_0],[p_r]} = 0$.

Next, suppose that $r = s = 0$. Then
\[
\inn{[\sF_0],[p_0]} = \T{Tr}\ma{cc}{1 & 0 \\ 0 & 0} = 1 \, .
\]

Finally, suppose that $r = 0$ and $s \in \{1,\ldots,l\}$. We then compute
\[
\begin{split}
\inn{[\sF_s],[p_0]} & = \T{Tr}\big( \pi_s(P) - \ep(P) \big)
= \T{Tr}\big( \pi_s(\xi_1^2 (z_1 z_1^*)^k) \big) + \T{Tr}\big( \pi_s(\xi_0 z_0^l (z_0^*)^l \xi_0) - 1 \big) \\
& = \T{Tr}\big( \pi_s( 1 - (z_0^*)^l z_0^l)^k \big) - \T{Tr}\big( \pi_s(1 - z_0^l (z_0^*)^l)^k \big) \, .
\end{split}
\]
We will prove in the next lemma that this quantity is equal to $1$. This will complete the proof of the present proposition.
\end{proof}

\begin{lemma} It holds that:
\[
\T{Tr}\big( \pi_s( 1 - (z_0^*)^l z_0^l)^k \big) - \T{Tr}\big( \pi_s(1 - z_0^l (z_0^*)^l)^k \big)
= \T{Tr}\big(\pi_s([z_0^l,(z_0^*)^l])\big) 
= 1 \, .
\]
\end{lemma}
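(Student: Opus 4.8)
The plan is to diagonalize, in each representation $\pi_s$ with $s \in \{1,\dots,l\}$, both of the operators $z_0^l(z_0^*)^l$ and $(z_0^*)^l z_0^l$, and then to recognize the two quantities in the statement as telescoping series. First I would recall the two identities established in the proof of Proposition~\ref{p:fingenpro}, namely $z_0^l(z_0^*)^l = \prod_{m=0}^{l-1}(1 - q^{2m} z_1 z_1^*)$ and $(z_0^*)^l z_0^l = \prod_{m=1}^l(1 - q^{-2m} z_1 z_1^*)$, together with the explicit formula $\pi_s(z_1 z_1^*) e_p = q^{2(s+lp)} e_p$ from \eqref{eq:defreptea}. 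Introducing, for integers $p \geq -1$, the number $f(p) := \prod_{m=0}^{l-1}(1 - q^{2(s+m+lp)})$, the substitution $m \mapsto l-m$ in the second product yields
\[
\pi_s\big(z_0^l(z_0^*)^l\big) e_p = f(p) \, e_p \qquad \text{and} \qquad \pi_s\big((z_0^*)^l z_0^l\big) e_p = f(p-1) \, e_p \, ,
\]
so that $\pi_s((z_0^*)^l z_0^l)$ is the ``shift by one'' of $\pi_s(z_0^l(z_0^*)^l)$. The elementary but crucial point is that $f(-1) = 0$: since $s \in \{1,\dots,l\}$, the value $m = l-s$ lies in $\{0,\dots,l-1\}$ and contributes the factor $1 - q^0 = 0$; this is exactly where the hypothesis $1 \leq s \leq l$ is used. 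Moreover, for $p \geq 0$ each factor of $f(p)$ lies in $(0,1)$, so $f$ is non-decreasing on $\{-1,0,1,\dots\}$, is bounded above by $1$, and $f(p) \to 1$ as $p \to \infty$ because $q \in (0,1)$.

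Next I would observe that all three operators appearing in the statement are of trace class. Indeed, each of $1 - (z_0^*)^l z_0^l$, $1 - z_0^l(z_0^*)^l$ and $[z_0^l,(z_0^*)^l] = z_0^l(z_0^*)^l - (z_0^*)^l z_0^l$ is a polynomial in $z_1 z_1^*$ with vanishing constant term (both products above equal $1$ at $z_1 z_1^* = 0$), hence so is each $k$-th power, and since $\pi_s(z_1 z_1^*) \in \sL^1$ by Lemma~\ref{l:factraide} and $\sL^1$ is an ideal, the claim follows. The traces may therefore be evaluated in the orthonormal basis $\{e_p\}$. Writing $g(p) := (1 - f(p))^k$, one has $\pi_s\big((1 - (z_0^*)^l z_0^l)^k\big) e_p = g(p-1) e_p$ and $\pi_s\big((1 - z_0^l(z_0^*)^l)^k\big) e_p = g(p) e_p$, and likewise $\pi_s([z_0^l,(z_0^*)^l]) e_p = (f(p) - f(p-1)) e_p$. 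Hence
\[
\T{Tr}\big(\pi_s([z_0^l,(z_0^*)^l])\big) = \sum_{p=0}^\infty \big( f(p) - f(p-1) \big) = \lim_{N \to \infty} f(N) - f(-1) = 1 - 0 = 1 \, ,
\]
and similarly, using $g(-1) = (1-0)^k = 1$ and $g(N) \to (1-1)^k = 0$,
\[
\T{Tr}\big(\pi_s(1 - (z_0^*)^l z_0^l)^k\big) - \T{Tr}\big(\pi_s(1 - z_0^l(z_0^*)^l)^k\big) = \sum_{p=0}^\infty \big( g(p-1) - g(p) \big) = g(-1) - \lim_{N \to \infty} g(N) = 1 \, .
\]
Both telescoping sums converge absolutely: $f$ is non-decreasing and $g$ is non-increasing on $\{-1,0,1,\dots\}$, so the summands are of constant sign, and the trace-class property already guarantees absolute convergence in this basis.

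The computation is essentially bookkeeping once the right object $f(p)$ is in place; the only places requiring care are getting the reindexing $m \mapsto l - m$ correct so that the two diagonal operators are a one-step shift of each other, and checking $f(-1) = 0$, which is the sole input that genuinely uses $1 \leq s \leq l$. The trace-class and convergence justifications are routine given Lemma~\ref{l:factraide} and the monotonicity of $f$, so I do not expect a serious obstacle beyond keeping the indices straight.
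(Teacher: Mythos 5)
Your proof is correct, and it takes a genuinely different route from the paper's. The paper first reduces the difference of traces of $k$-th powers to the trace of the commutator by an algebraic descent in the exponent $j$, using cyclicity of the trace and the identity $x(1-x^*x)^{j-1}x^* = (1-xx^*)^{j-1}xx^*$ with $x = z_0^l$; it then computes $\T{Tr}\big(\pi_s([z_0^l,(z_0^*)^l])\big)$ by expanding the commutator via the $q^2$-binomial theorem and summing the resulting geometric series, recognizing the total as $1 - \prod_{m=1}^l(1-q^{2(s-m)})$ with a vanishing factor for $m = s$. You instead diagonalize $\pi_s(z_0^l(z_0^*)^l)$ and $\pi_s((z_0^*)^l z_0^l)$ at the outset, observe that their eigenvalue sequences are a unit shift of one another via $f(p) = \prod_{m=0}^{l-1}(1-q^{2(s+m+lp)})$, and then all three quantities collapse to telescoping sums with boundary values $f(-1) = 0$ (where $1 \le s \le l$ enters) and $\lim_{N\to\infty} f(N) = 1$. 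What your approach buys: it bypasses the $q$-binomial machinery entirely, treats both equalities on the same footing rather than reducing one to the other, and makes the role of the hypothesis $s \in \{1,\ldots,l\}$ and the appearance of the value $1$ completely transparent. What the paper's approach buys: the algebraic reduction step $\T{Tr}((1-x^*x)^j) - \T{Tr}((1-xx^*)^j) = \T{Tr}([x,x^*])$ is a general operator-theoretic identity valid for any $x$ with $1-x^*x$ trace class, independent of the particular diagonalizing basis, though in this concrete setting that generality is not exploited further. Your trace-class justification (constant term zero, $\pi_s(z_1 z_1^*) \in \sL^1$, $\sL^1$ an ideal) and the monotonicity argument ensuring the telescoping sums converge are both sound.
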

\begin{proof}
Notice firstly that $\pi_s\big( 1 - (z_0^*)^l z_0^l \big), \pi_s\big( 1 - z_0^l (z_0^*)^l \big) \in \sL^1(l^2(\nn_0))$ 
by Lemma~\ref{l:factraide}. It then follows by induction that
\[
\T{Tr}\big( \pi_s( 1 - (z_0^*)^l z_0^l)^k \big) - \T{Tr}\big( \pi_s(1 - z_0^l (z_0^*)^l)^k \big)
= \T{Tr}\big(\pi_s( [z_0^l,(z_0^*)^l])\big) \, .
\]
Indeed, with $x : = z_0^l$, for all $j \in \{2,3,\ldots\}$, one has that,
\[
\begin{split}
& \T{Tr}\big( \pi_s( 1 - x^* x)^j \big) - \T{Tr}\big( \pi_s(1 - x x^*)^j \big) \\
& \q = \T{Tr}\big( \pi_s( 1 - x^*x)^{j-1} \big)
- \T{Tr}\big( \pi_s( x x^* (1 - x x^*)^{j-1}) \big)
- \T{Tr}\big( \pi_s(1 - x x^*)^j \big) \\
& \q = \T{Tr}\big( \pi_s( 1 - x^* x )^{j-1} \big) - \T{Tr}\big( \pi_s(1 - x x^* )^{j-1} \big) \, .
\end{split}
\]
It therefore suffices to show that $\T{Tr}\big( \pi_s( [z_0^l, (z_0^*)^l] ) \big) = 1$. 
Now, one has:
\[
[z_0^l, (z_0^*)^l ] = \sum_{m = 0}^l (-1)^m q^{m(m-1)} {l \choose m}_{q^2} (1 - q^{-2ml}) \ (z_1 z_1^*)^m
\]
where the notation ${l \choose m}_{q^2}$ refers to the $q^2$-binomial coefficient, defined by the identity
\[
\prod_{m = 1}^l (1 + q^{2(m-1)} Y) = \sum_{m = 0}^l q^{m(m-1)} {l \choose m}_{q^2} Y^m 
\]
in the polynomial algebra $\cc[Y]$. Then, as in \cite[Prop.~4.3]{BrFa:QT} one computes:
\[
\begin{split}
\T{Tr}\big( \pi_s( [z_0^l, (z_0^*)^l] ) \big) & = \sum_{m = 1}^l (-1)^m q^{m(m-1)} {l \choose m}_{q^2} (1 - q^{-2ml}) 
\frac{q^{2ms}} {1-q^{2ml}} \\ 
& = 1 - \sum_{m = 0}^l (-1)^m q^{m(m-1)} {l \choose m}_{q^2} \ q^{2m(s-l)}
\\ 
& = 1 - \prod_{m = 1}^l (1 - q^{2(s-m)}) =1 \, ,
\end{split}
\]
since, due to $s \in \{1,\ldots,l\}$ one of the factors in the product must vanish.
\end{proof}

\begin{remark}
The non-vanishing of the pairings in Proposition~\ref{p:indpaicom} for $r=0$ means that the class of the projection $P$ in \eqref{e:bp} is non-trivial in $K_0(C(W_q(k,l)))$. (In this case the pairings are computing the couplings of the Fredholm modules of \cite[\S4]{BrFa:QT} with the projection $P$.)
Geometrically this means that the line bundle $\sA_{(1)}(k,l)$ over $\C O(W_q(k,l))$ and then the quantum principal $U(1)$-bundle $\C O(W_q(k,l)))\hookrightarrow \C O(L_q(dlk); k,l)$ are non-trivial.
\end{remark}

\subsection{Gysin sequences}
To ease the notation, we now let $C(W_q) := C(W_q(k,l))$ and $C(L_q(d)) := C(L_q(dkl;k,l))$. Also as before we let $E$ denote the Hilbert $C^*$-module over $C(W_q)$ obtained as the closure of $\sA_{(1)}(k,l)$ in $C(S_q^3)$. The $*$-homomorphism $\phi : C(W_q) \to \sL(E)$ is induced by the product on $C(S_q^3)$.

For each $d \in \nn$, let $[E^{\hot d}] \in KK(C(W_q),C(W_q))$ denote the class of the Hilbert $C^*$-module 
$E^{\hot_\phi d}$ 
as in Definition~\ref{d:hilmodcla}. And recall from Theorem~\ref{t:pimlenspa} that 
the Pimnser algebra $\C O_{E^{\hot_\phi d}}$ can be identified with $C(L_q(d))$:
\[
\C O_{E^{\hot_\phi d}} \simeq C(L_q(d)) \, .
\]
Then, given any separable $C^*$-algebra $B$, by
Theorem~\ref{t:gysseq} we obtain two six term exact sequences:
\begin{equation}\label{eq:sixexamodI-0}
\begin{CD}
KK_0(B, C(W_q)) @>{1 - [E^{\hot d}]}>> KK_0(B,C(W_q)) @>{i_*}>> KK_0\big(B,C(L_q(d))\big) \\
@A{[\pa]}AA & & @VV{[\pa]}V \\
KK_1(B,C(L_q(d))) @<<{i_*}< KK_1(B,C(W_q)) @<<{1 - [E^{\hot d}]}< KK_1(B,C(W_q))
\end{CD}
\end{equation}
and
\begin{equation}\label{eq:sixexamodII-0}
\begin{CD}
KK_0(C(W_q),B) @<<{1 - [E^{\hot d}]}< KK_0(C(W_q),B) @<<{i^*}< KK_0\big(C(L_q(d)),B\big) \\
@VV{[\pa]}V & & @A{[\pa]}AA \\
KK_1\big(C(L_q(d)),B\big) @>{i^*}>> KK_1(C(W_q),B) @>{1 - [E^{\hot d}]}>> KK_1(C(W_q),B)
\end{CD} \, .
\end{equation}
We will refer to these two sequences as the \emph{Gysin sequences} (in $KK$-theory) for the quantum lens space $L_q(dkl;k,l)$.

\begin{remark}\label{rem:gysseqII}
For  $B = \cc$, the first sequence above was first constructed in \cite{ABL14} for quantum lens spaces in any dimension $n$ (and not just for $n=1$) but with weights all equal to one; so that the `base space' was a quantum projective space. 
\end{remark}

\subsection{Computing the $KK$-theory of quantum lens spaces}

We recall from \cite[Prop.~5.1]{BrFa:QT} that $C(W_q)$ is isomorphic to $\wit{\sK^l}$ (see also \S\ref{ss:teacom}). In particular, this means that $C(W_q)$ is $KK$-equivalent to $\cc^{l + 1}$. 

To show this equivalence explicitly, for each $s \in \{0,\ldots,l\}$ we define a $KK$-class $[\Pi_s] \in KK(C(W_q),\cc)$  via the Kasparov module $\Pi_s\in \ee (C(W_q),\cc)$ given by:
\[
\Pi_s := \big(l^2(\nn_0)_+ \op l^2(\nn_0)_-, \wit{\pi}_s, F, \gamma \big) \quad \T{ for } \, s \neq 0 \quad \T{and} 
\quad \Pi_0 := ( \cc,\ep,0) \quad\T{ for } \,   s = 0 \, ,
\]
with $F$ and $\gamma$ the canonical operators in \eqref{e:Fg}. The representation is
\[
\wit{\pi}_s=\ma{cc}{\pi_s & 0 \\ 0 & \ep} \, ,
\]
with the representation $\pi_s$ given by \eqref{eq:defreptea} and $\ep$ is (induced by) the counit. 

Furthermore, for each $r \in \{0,\ldots,l\}$ we define the $KK$-class $[I_r] \in KK(\cc,C(W_q))$ by the Kasparov module
\[
I_r := \big( C(W_q), i_r, 0 \big) \in \ee (\cc,C(W_q)) \, ,
\]
where $i_r : \cc \to C(W_q)$ is the $*$-homomorphism defined by $i_r : 1 \mapsto p_r$ with the orthogonal projections $p_r \in C(W_q)$ given in \eqref{eq:protea}. 

Upon collecting these classes as   
\[
[\Pi] := \op_{s = 0}^l [\Pi_s] \in KK(C(W_q),\cc^{l + 1}) \quad \T{and} \quad [I] := \op_{r = 0}^l [I_r] \in KK(\cc^{l+1},C(W_q)) \, , 
\]
it follows that $[I] \hot_{C(W_q)} [\Pi] = [1_{\cc^{l+1}}]$ and that $[\Pi] \hot_{\cc^{l+1}} [I] = [1_{C(W_q)}]$,
from stability of $KK$-theory (see \cite[Cor.~17.8.8]{Bl}).  

We need a final tensoring with the Hilbert $C^*$-module $E$. This yields a class 
\[
[I_r] \hot_{C(W_q)} [E] \hot_{C(W_q)} [\Pi_s] \in KK(\cc,\cc) \, ,
\]
for each $s,r \in \{0,\ldots,l\}$. Then, we let $M_{sr} \in \zz$ denote the corresponding integer in $KK(\cc,\cc) \simeq \zz$,
with $M := \{ M_{sr} \}_{s,r = 0}^l \in M_{l + 1}(\zz)$ the corresponding matrix.

As a consequence the six term exact sequence in \eqref{eq:sixexamodI-0} becomes
\begin{equation}\label{eq:sixexamodI}
\begin{CD}
\op_{r =0}^l K^0(B) @>{1 - M^d}>> \op_{s = 0}^l K^0(B) @>>> KK_0\big(B,C(L_q(d))\big) \\
@AAA & & @VVV \\
KK_1(B,C(L_q(d))) @<<< \op_{s = 0}^l K^1(B) @<<{1 - M^d}< \op_{r = 0}^l K^1(B)
\end{CD}
\end{equation}
while, with $M^t \in M_{l + 1}(\zz)$ denoting the matrix transpose of $M \in M_{l + 1}(\zz)$, 
the six term exact sequence in \eqref{eq:sixexamodII-0} becomes
\begin{equation}\label{eq:sixexamodII}
\begin{CD}
\op_{s = 0}^l K_0(B) @<<{1 - (M^t)^d}< \op_{r = 0}^l K_0(B) @<<< KK_0\big(C(L_q(d)),B\big) \\
@VVV & & @AAA \\
KK_1\big(C(L_q(d)),B\big) @>>> \op_{r = 0}^l K_1(B) @>{1 - (M^t)^d}>> \op_{s = 0}^l K_1(B)
\end{CD} \, .
\end{equation}

\medskip
In order to proceed we therefore need to compute the matrix $M \in M_{l + 1}(\zz)$.

\begin{lemma}\label{l:kasprocom}
The Kasparov product $[E] \hot_{C(W_q)} [\Pi_s] \in KK(C(W_q),\cc)$ is represented by the Fredholm module $\sF_s$ 
in Lemma~\ref{l:fmf} for each $s \in \{0,\ldots,l\}$.
\end{lemma}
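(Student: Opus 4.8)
The plan is to evaluate the left-hand side by the standard description of the Kasparov product by a finitely generated projective module, and then to deform the resulting cycle onto $\sF_s$ by a linear operator homotopy. I treat $s \in \{1,\dots,l\}$ first; the case $s = 0$ is an easy separate check, recorded at the end.

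First I would use the isometry $\Psi : E \to C(W_q)^2$ of Hilbert $C^*$-modules from \eqref{e:bp}, with $\Psi^*\Psi = \T{id}_E$ and $\Psi\Psi^* = P$. It makes the cycle $(E,\phi,0)$ representing $[E] \in KK_0(C(W_q),C(W_q))$ unitarily equivalent to $\big(P\cdot C(W_q)^2,\ \Phi,\ 0\big)$, where $\Phi(b) := \Psi\,\phi(b)\,\Psi^* \in M_2(C(W_q))$ acts by left multiplication; note $\Phi(b) = P\,\Phi(b)\,P$. Since this bimodule carries the zero operator, the standard formula for the product by a finitely generated projective module (see e.g. \cite[\S 18]{Bl}) represents $[E]\hot_{C(W_q)}[\Pi_s]$ by the compressed cycle
\[
\C G_s := \Big(\ \wit\pi_s(P)\big((l^2(\nn_0)_+ \op l^2(\nn_0)_-)^{\op 2}\big),\ \ \wit\pi_s(\Phi(\,\cdot\,)),\ \ \wit\pi_s(P)\,(F\op F)\,\wit\pi_s(P),\ \ \ga\op\ga\ \Big),
\]
with $\wit\pi_s$ and $\ep$ applied entrywise to elements of $M_2(C(W_q))$. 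Reorganizing the doubled Hilbert space as $H_+ \op H_-$ with $H := l^2(\nn_0)\ot\cc^2$, the grading $\ga\op\ga$ becomes $\ga$, the operator $F\op F$ becomes $F$, the projection $\wit\pi_s(P)$ becomes the block-diagonal $\T{diag}(\pi_s(P),\ep(P))$ on $H_+\op H_-$, and the representation $\wit\pi_s(\Phi(\,\cdot\,))$ becomes exactly $\rho_s$. Because $\Phi(b) = P\Phi(b)P$ one has $\wit\pi_s(P)\,\rho_s(b) = \rho_s(b) = \rho_s(b)\,\wit\pi_s(P)$, so $\C G_s$ together with the (zero-representation, hence degenerate) cycle on the orthogonal complement of $\wit\pi_s(P)$ is just $\big(H_+\op H_-,\ \rho_s,\ \wit\pi_s(P)\,F\,\wit\pi_s(P),\ \ga\big)$. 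Thus $[E]\hot_{C(W_q)}[\Pi_s]$ is the class of this last cycle.

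The remaining step is the linear homotopy $T_t := (1-t)\,\wit\pi_s(P)\,F\,\wit\pi_s(P) + t\,F$, $t\in[0,1]$, from that operator to the operator $F$ of $\sF_s$. The essential input is that $[\wit\pi_s(P),F]$ is compact --- its off-diagonal blocks are $\pm(\pi_s(P) - \ep(P))$, which are of trace class by the $1$-summability shown in Lemma~\ref{l:fmf} --- and likewise $[\rho_s(b),F]$ is of trace class since $\pi_s(\Phi(b)) - \ep(\Phi(b))$ is of trace class by Lemma~\ref{l:factraide}. Combined with $\wit\pi_s(P)\rho_s(b) = \rho_s(b)$ and the congruences $(\wit\pi_s(P)F\wit\pi_s(P))^2 \equiv \wit\pi_s(P)$ and $\{\wit\pi_s(P)F\wit\pi_s(P),\,F\}\equiv 2\wit\pi_s(P)$ modulo compacts, a direct computation gives $\rho_s(b)(T_t^2 - 1) \equiv 0$ and $[\rho_s(b),T_t] \equiv 0$ modulo compacts for all $b$ and $t$; since each $T_t$ is self-adjoint and anticommutes with $\ga$, the family $\big(H_+\op H_-,\ \rho_s,\ T_t,\ \ga\big)_{t\in[0,1]}$ is an operator homotopy of Kasparov modules from $\C G_s$ (plus a degenerate summand) at $t=0$ to $\sF_s$ at $t=1$. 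This yields $[E]\hot_{C(W_q)}[\Pi_s] = [\sF_s]$. For $s=0$ one instead checks directly that the fibre $E\hot_{\ep}\cc$ is one-dimensional with $C(W_q)$ acting through $\ep$ (using $\ep(z_0^l(z_0^*)^l) = 1$ and $\ep(\xi_0) = 1$), whence $[E]\hot_{C(W_q)}[\Pi_0] = [\Pi_0] = [\sF_0]$.

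I expect the main obstacle to lie in the compression step: justifying that $\wit\pi_s(P)\,F\,\wit\pi_s(P)$ is a legitimate $F$-connection for the tensor product (so that it actually computes the Kasparov product) and that it defines an honest Kasparov module. Both points rest on $[\wit\pi_s(P),F]$ being compact, i.e. precisely on the $1$-summability of $\sF_s$ established in Lemma~\ref{l:fmf}; once this is in hand, the rest is bookkeeping with the identification $E\simeq P\cdot C(W_q)^2$, the degenerate-complement observation, and the homotopy estimates above.
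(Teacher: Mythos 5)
Your argument is correct, but it takes a longer route than the paper's. The paper's proof passes at the outset from $(E,\phi,0)$ to the cycle $\bigl(C(W_q)^2,\ \Psi\phi\Psi^*,\ 0\bigr)$ on the \emph{free} module $C(W_q)^2$: since these two differ exactly by the degenerate summand $\bigl((1-P)C(W_q)^2,\ 0,\ 0\bigr)$, they define the same class in $KK(C(W_q),C(W_q))$. Because the first module is now free, its interior tensor product over $C(W_q)$ with $\bigl(l^2(\nn_0)_+\op l^2(\nn_0)_-,\ \wit\pi_s,\ F,\ \ga\bigr)$ is simply the doubled Hilbert space $\simeq H_+\op H_-$ carrying the representation $\rho_s$ and the operator $1\ot F=F\op F$, which under that same reorganisation is precisely the canonical $F$ of $\sF_s$; as the first operator is zero, $1\ot F$ is already an $F$-connection and the positivity condition for the Kasparov product is vacuous, so $\sF_s$ drops out with nothing more to verify. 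You, by contrast, stay on the projective submodule $P\cdot C(W_q)^2$, so the tensor product produces the compressed operator $\wit\pi_s(P)F\wit\pi_s(P)$, and you then need the additional step of passing to the canonical $F$ — your linear operator homotopy works, but it is simpler to note that $\rho_s(b)\bigl(F-\wit\pi_s(P)F\wit\pi_s(P)\bigr)=\rho_s(b)F(1-\wit\pi_s(P))=[\rho_s(b),F](1-\wit\pi_s(P))\in\sK$, using $\rho_s(b)\wit\pi_s(P)=\rho_s(b)$, so the two cycles are compact perturbations of one another. The obstacle you flag at the end (legitimacy of the compressed $F$-connection) is real in your formulation, and rests as you say on $[\wit\pi_s(P),F]\in\sK$; the paper's choice of free-module representative sidesteps both the compression and the homotopy entirely. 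Your treatment of $s=0$ agrees with the paper's up to a relabelling of the two copies of $\cc$.
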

\begin{proof}
Recall firstly that the class $[E] \in KK(C(W_q),C(W_q))$ is represented by the Kasparov module
\[
\big( E, \phi , 0 \big) \in \ee (C(W_q),C(W_q)) \, , 
\]
where $\phi : C(W_q) \to \sL(E)$ is induced by the product on the algebra $C(S_q^3)$. 
It then follows from the observations in the beginning of \S\ref{s:kktlen} that $(E, \phi , 0)$ is equivalent to the Kasparov module
\[
\big( C(W_q)^2, \Psi \phi \Psi^*, 0 \big) \in \ee (C(W_q),C(W_q)) \, .
\]
Suppose next that $s = 0$. The Kasparov product $[E] \hot_{C(W_q)} [\Pi_0]$ is then represented by the Kasparov module
\[
\big( C(W_q)^2 \hot_\ep \cc, \Psi \phi \Psi^* \ot 1, 0 \big) \in \ee ( C(W_q),\cc) \, , 
\]
which is equivalent to the Kasparov module
\[
\big( \cc_+ \op \cc_- \, , \ma{cc}{\ep & 0 \\ 0 & 0}, \ma{cc}{0 & 1 \\ 1 & 0} \big) \, .
\]
This proves the claim of the lemma in this case.

Suppose thus that $s \in \{1,\ldots,l\}$. The Kasparov product $[E] \hot_{C(W_q)} [\Pi_s]$ is then represented by the Kasparov module given by the $\zz/2\zz$-graded Hilbert space
\[
\big( C(W_q)^2 \hot_{\pi_s} \, l^2(\nn_0) \big)_+ \op \big( C(W_q)^2 \hot_{\ep} \, l^2(\nn_0) \big)_- 
\simeq { H_+ \op H_-}
\]
with associated $*$-homomorphism 
\[
\rho_s = \ma{cc}{\pi_s(\Psi \phi \Psi^*) & 0 \\ 0 & \ep(\Psi \phi \Psi^*)} : C(W_q) \to \sL\big(H_+ \op H_-\big) \, ,
\]
and with Fredholm operator $F$ and grading $\gamma$ the canonical ones in \eqref{e:Fg}. 
This proves the claim of the lemma in these cases as well.
\end{proof}

The results of Lemma~\ref{l:kasprocom} and Proposition~\ref{p:indpaicom} now yield the following:

\begin{prop}\label{p:commat}
The matrix $M = \{ M_{sr} \} \in M_{l + 1}(\zz)$ has entries
\[
M_{sr} = \inn{[\sF_s],[I_r] } = \fork{ccc}{ 1 & \T{for} & s = r \\ 1 & \T{for} & r = 0 \\ 0 & & \T{else} } \, .
\]
\end{prop}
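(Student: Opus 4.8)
The plan is to identify the integer $M_{sr}\in KK(\cc,\cc)\simeq\zz$ with the index pairing $\inn{[\sF_s],[p_r]}$ computed in Proposition~\ref{p:indpaicom}, and then simply read off the entries from that proposition.

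First I would unwind the definition of $M_{sr}$. By construction $M_{sr}$ is the integer corresponding to the Kasparov product $[I_r]\hot_{C(W_q)}[E]\hot_{C(W_q)}[\Pi_s]\in KK(\cc,\cc)$. By associativity of the Kasparov product I would regroup this as $[I_r]\hot_{C(W_q)}\big([E]\hot_{C(W_q)}[\Pi_s]\big)$. Now Lemma~\ref{l:kasprocom} tells us precisely that $[E]\hot_{C(W_q)}[\Pi_s]$ is represented by the Fredholm module $\sF_s$ of Lemma~\ref{l:fmf}, i.e.\ $[E]\hot_{C(W_q)}[\Pi_s]=[\sF_s]\in KK(C(W_q),\cc)=K^0(C(W_q))$. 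Hence $M_{sr}=[I_r]\hot_{C(W_q)}[\sF_s]$.

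Next I would recognise this last product as the $K$-theory/$K$-homology index pairing. The class $[I_r]\in KK(\cc,C(W_q))$ is, by its very definition as the Kasparov module $(C(W_q),i_r,0)$ with $i_r(1)=p_r$, exactly the class in $K_0(C(W_q))$ of the projection $p_r$. Pairing a $K$-homology class $[\sF_s]$ with the $K$-theory class of a projection is, by the standard formula recalled just before Proposition~\ref{p:indpaicom}, given by $\tfrac12\T{Tr}\big(\ga F[F,\rho_s(p_r)]\big)=\inn{[\sF_s],[p_r]}$. Thus $M_{sr}=\inn{[\sF_s],[p_r]}$, and substituting the value from Proposition~\ref{p:indpaicom} gives the stated formula:
\[
M_{sr}=\inn{[\sF_s],[p_r]}=\fork{ccc}{ 1 & \T{for} & s = r \\ 1 & \T{for} & r = 0 \\ 0 & & \T{else} } \, .
\]

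The only slightly delicate point — and the step I would treat with the most care — is the bookkeeping needed to see that the triple Kasparov product $[I_r]\hot[E]\hot[\Pi_s]$ really does collapse to the index pairing $\inn{[\sF_s],[p_r]}$ rather than to some twisted variant; this is where associativity of the Kasparov product and the explicit identification in Lemma~\ref{l:kasprocom} (including the replacement of $(E,\phi,0)$ by the equivalent module $(C(W_q)^2,\Psi\phi\Psi^*,0)$, which is why the Fredholm modules $\sF_s$ rather than those of \cite{BrFa:QT} appear) must be invoked carefully. Once that identification is in place, everything else is a direct quotation of Lemma~\ref{l:kasprocom} and Proposition~\ref{p:indpaicom}, so the proof is essentially a two-line assembly of already-established facts.
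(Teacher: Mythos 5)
Your proposal is correct and follows exactly the same route as the paper, which simply cites Lemma~\ref{l:kasprocom} and Proposition~\ref{p:indpaicom} without further elaboration. You have usefully spelled out the implicit bookkeeping (associativity of the Kasparov product, the identification $[E]\hot_{C(W_q)}[\Pi_s]=[\sF_s]$, and the recognition of $[I_r]\hot_{C(W_q)}[\sF_s]$ as the index pairing $\inn{[\sF_s],[p_r]}$), but the underlying argument is identical.
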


A combination of Proposition~\ref{p:commat} and the six term exact sequences in \eqref{eq:sixexamodI} and \eqref{eq:sixexamodII} then allows us to compute the $K$-theory and the $K$-homology of the quantum lens space $L_q(dlk;k,l)$ for all $d \in \nn$. 

When $B = \cc$, the sequence in \eqref{eq:sixexamodI} reduces to
\[
\xymatrix{ 
0 \longrightarrow K_1(C(L_q(d)) \longrightarrow \zz^{l+1} \,  \ar[r]^-{\, \, 1 - M^d \, \, } & \, \zz^{l+1} \longrightarrow K_0(C(L_q(d)) \longrightarrow 0 
}
\]
while the one in \eqref{eq:sixexamodII} becomes
\[
\xymatrix{ 
0 \longleftarrow K^1(C(L_q(d))  \longleftarrow  \zz^{l+1} \,  & \, \zz^{l+1} \ar[l]_-{\, \,  1 -( M^t)^d \, \, }  \longleftarrow K^0(C(L_q(d)) 
\longleftarrow 0 \, .
}
\]

Let us use the notation $\io : \zz \to \zz^l$, $1 \mapsto (1,\ldots,1)$ for the diagonal inclusion and let $\io^t : \zz^l \to \zz$ denote the transpose, $\io^t : (m_1,\ldots,m_l) \mapsto m_1 \plp m_l$.

\begin{theorem}
Let $k,l \in \nn$ be coprime and let $d \in \nn$. Then
\[
\begin{split}
& K_0\big( C(L_q(dlk;k,l)) \big) \simeq \T{Coker}(1 - M^d) \simeq \zz \op \big( \zz^l / \T{Im}(d \cd \io) \big) \\
& K_1\big(C(L_q(dlk;k,l))\big) \simeq \T{Ker}(1 - M^d) \simeq  \zz^l \\
\end{split}
\]
and
\[
\begin{split}
& K^0\big( C(L_q(dlk;k,l)) \big) \simeq \T{Ker}(1 - (M^t)^d ) \simeq \zz \op \big( \T{Ker}(\io^t) \big) \\
& K^1\big( C(L_q(dlk;k,l)) \big) \simeq \T{Coker}(1 - (M^t)^d) \simeq \zz/(d \zz) \op \zz^l  \, .
\end{split}
\]
\end{theorem}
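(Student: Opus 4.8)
The plan is to read the four groups directly off the two six term exact sequences \eqref{eq:sixexamodI} and \eqref{eq:sixexamodII} specialised to $B = \cc$, and then to carry out a short piece of integral linear algebra. First I would recall from Corollary~\ref{c:ktwps} that $K_0(C(W_q)) \cong \zz^{l+1}$ and $K_1(C(W_q)) = 0$; hence in \eqref{eq:sixexamodI} with $B = \cc$ the groups $\op_{r=0}^l K^1(\cc)$ and $\op_{s=0}^l K^1(\cc)$ vanish, so the six term sequence collapses to the short exact sequence
\[
0 \to K_1(C(L_q(d))) \to \zz^{l+1} \to \zz^{l+1} \to K_0(C(L_q(d))) \to 0
\]
with middle map $1 - M^d$, exactly as displayed before the statement. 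This gives $K_1(C(L_q(d))) \cong \T{Ker}(1 - M^d)$ and $K_0(C(L_q(d))) \cong \T{Coker}(1 - M^d)$, and dually \eqref{eq:sixexamodII} with $B = \cc$ yields $K^0(C(L_q(d))) \cong \T{Ker}(1 - (M^t)^d)$ and $K^1(C(L_q(d))) \cong \T{Coker}(1 - (M^t)^d)$. It then only remains to compute these four (co)kernels.

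The key step is to exploit the shape of $M$ from Proposition~\ref{p:commat}. Writing $M = I_{l+1} + N$, the matrix $N \in M_{l+1}(\zz)$ has a single nonzero column, the $0$-th one, equal to $(0,1,\ldots,1)^t$; equivalently $N$ sends the basis vector $e_0$ to $(0,1,\ldots,1)^t$ and annihilates $e_1,\ldots,e_l$. Since $\T{Im}(N)$ is the rank-one subgroup spanned by $(0,1,\ldots,1)^t$, on which $N$ vanishes (its $0$-th coordinate is $0$), one has $N^2 = 0$. By the binomial theorem $M^d = I_{l+1} + d N$ and $(M^t)^d = I_{l+1} + d N^t$ for every $d \in \nn$, so that
\[
1 - M^d = - d N \q \M{and} \q 1 - (M^t)^d = - d N^t \, .
\]

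The last step is bookkeeping. Decompose $\zz^{l+1} = \zz e_0 \op \zz^l$ with $\zz^l = \op_{j=1}^l \zz e_j$, and let $\io : \zz \to \zz^l$, $1 \mapsto (1,\ldots,1)$, with transpose $\io^t$. As $\zz^{l+1}$ is torsion free and $d \neq 0$, $\T{Ker}(-dN) = \T{Ker}(N) = \zz^l$ and $\T{Im}(-dN) = \T{Im}(d \cd \io) \su \zz^l$, giving $K_1(C(L_q(d))) \cong \zz^l$ and $K_0(C(L_q(d))) \cong \zz \op \big( \zz^l / \T{Im}(d \cd \io) \big)$. Likewise $N^t$ kills $e_0$ and sends $v \in \zz^l$ to $\io^t(v)\, e_0$, so $\T{Ker}(-dN^t) = \zz \op \T{Ker}(\io^t)$ and $\T{Im}(-dN^t) = d\zz\, e_0$, giving $K^0(C(L_q(d))) \cong \zz \op \T{Ker}(\io^t)$ and $K^1(C(L_q(d))) \cong \zz/(d\zz) \op \zz^l$. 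I would close by remarking, via Smith normal form, that $\T{Ker}(\io^t) \cong \zz^{l-1}$ and $\zz^l/\T{Im}(d \cd \io) \cong \zz^{l-1} \op \zz/(d\zz)$, which exhibits the free ranks. The genuine content — the Pimsner presentation of the lens space (Theorem~\ref{t:pimlenspa}), the Gysin sequences (Theorem~\ref{t:gysseq}), and the computation of the index matrix $M$ (Proposition~\ref{p:commat}) — is already in place, so there is no real obstacle here; the only points demanding care are tracking which $\zz$-summand of $\zz^{l+1}$ carries the class $[p_0] = [1]$ as opposed to $[p_1],\ldots,[p_l]$, and keeping the two sequences and their transposes pointed in the correct directions. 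It is the nilpotency $N^2 = 0$ that collapses the computation of $M^d$ for all $d$ simultaneously into a one-line matrix identity.
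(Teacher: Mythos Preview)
Your proposal is correct and is precisely the argument the paper has in mind: the text immediately preceding the theorem sets up the collapsed exact sequences and the matrix $M$ from Proposition~\ref{p:commat}, and then states the result without spelling out the linear algebra. Your observation that $N := M - I_{l+1}$ satisfies $N^2 = 0$, hence $1 - M^d = -dN$, is exactly the computation the paper leaves to the reader, and your identification of the resulting kernels and cokernels with the displayed groups is accurate.
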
 

We finish by stressing that the results on the $K$-theory and $K$-homology of the lens spaces $L_q(dlk;k,l)$ are different from the ones obtained 
for instance in \cite{HS03}. In fact our lens spaces are not included in the class of lens spaces considered there.
Thus, for the moment, there seems to be no alternative method which results in a computation of the $KK$-groups of these spaces.

\end{document}